\newcommand{\C} {\mathbb{C}}
\newcommand{\Q} {\mathbb{Q}}
\newcommand{\N}  {\mathbb{N}}
\newcommand{\R} {\mathbb{R}}
\newcommand{\Z}{\mathbb{Z}}
\newcommand{\PP}{\mathbb{P}}
\newcommand{\NS}{\mathop{\rm NS}}
\newcommand{\Num}{\mathop{\rm Num}}
\newcommand{\MW}{\mathop{\rm MW}}
\newcommand{\MWL}{\mathop{\rm MWL}}
\newcommand{\Km}{\mathop{\rm Km}}
\newcommand{\A}{\mathbb{A}}
\newcommand{\Br}{\mathop{\rm Br}\nolimits}
\newcommand{\Aut}{\mathop{\rm Aut}}
\newcommand{\mX}{\mathcal X}
\newcommand{\mY}{\mathcal Y}
\newcommand{\fS}{\mathfrak S}
\newcommand{\ra}{\rightarrow}
\newcommand{\eprf}{\hfill $\square$ \bigskip\par}
\newtheorem{Theorem}{Theorem}
\newtheorem{Proposition}[Theorem]{Proposition}
\newtheorem{Lemma}[Theorem]{Lemma}
\newtheorem{prop}[Theorem]{Proposition}%[section]
\newtheorem{lemma}[Theorem]{Lemma}%[section]
\newtheorem{Corollary}[Theorem]{Corollary}
\theoremstyle{remark}
\newtheorem{rem}[Theorem]{Remark}%[section]
\theoremstyle{definition}
\begin{document}

\title{Enriques Surfaces -- Brauer groups and Kummer structures}

\author{Alice Garbagnati}
\address{Dipartimento di Matematica, Universit\`a di Milano,
  via Saldini 50, I-20133 Milano, Italy}
\email{alice.garbagnati@unimi.it}
\urladdr{http://sites.google.com/site/alicegarbagnati/home}

\author{Matthias Sch\"utt}
\address{Institut f\"ur Algebraische Geometrie, Leibniz Universit\"at
  Hannover, Welfengarten 1, 30167 Hannover, Germany}
\email{schuett@math.uni-hannover.de}
\urladdr{http://www.iag.uni-hannover.de/$\sim$schuett/}

\subjclass[2000]{14J28; 14J27, 14J50, 14F22}

\keywords{Enriques surface, Kummer surface, elliptic fibration, Brauer group, Shioda-Inose structure, Enriques Calabi-Yau}
%
%\thanks{Partial funding from DFG grant Hu 337/6-1 is gratefully acknowledged}

%\dedicatory{Dedicated to Tetsuji Shioda on the occasion of his 70th birthday}

%
\date{February 10, 2011}

 \begin{abstract}
This paper develops families of complex Enriques surfaces whose Brauer groups pull back identically to zero on the covering K3 surfaces.
Our methods rely on isogenies with Kummer surfaces of product type.
We offer both lattice theoretic and geometric constructions.
We also sketch how the construction connects to string theory and Picard--Fuchs equations in the context of Enriques Calabi-Yau threefolds. 
\end{abstract}
 
\maketitle
 
\section{Introduction}

%{\bf Something more on Brauer group?}

The Brauer group is an important, but very subtle birational invariant of a projective surface.
In \cite{Beau},
Beauville proved that generically the Brauer group of a complex Enriques surface injects into
the Brauer group of the covering K3 surface.
Subsequently Beauville asked for explicit examples 
where the Brauer groups pull back identically to zero.
This problem has recently been solved in \cite{GvG} (see also Section \ref{ss:GvG}) and in \cite{HS},
but only by isolated so-called singular K3 surfaces (Picard number 20).
In this paper we develop methods to derive such surfaces in one-dimensional families.
Our results cover both the Kummer and the non-Kummer case.
%Our two main results concern the Kummer and the non-Kummer case:
In Section \ref{s:lat},
we construct for any integer $N>1$ a one-dimensional family $\mX_N$ of complex K3 surfaces 
with Picard number $\rho\geq 19$
such that the general member admits an Enriques involution $\tau$.

\begin{Theorem}
\label{thm}
%Let $N>1$ be an odd integer.
%Let $E, E'$ denote complex $N$-isogenous elliptic curves without CM.
%Then there is a K3 surface $X_N$ (not Kummer) with 
%\begin{enumerate}
%\item
%a rational degree two map to the Kummer surface of $E\times E'$ and
%\item
%an Enriques involution $\tau$ such that the Brauer group pulls back identically zero 
%from the Enriques quotient $X_N/\tau$.
%\end{enumerate}
Let $N>1$. 
Consider a general K3 surface $X_N\in \mX_N$, i.e.~$\rho(X_N)=19$.
Denote the quotient by the Enriques involution $\tau$ by $Z_N$.
Then
\[
\pi^* \Br(Z_N) =
\begin{cases}
\{0\} & \text{ if $N$ is odd},\\
\Z/2\Z & \text{ if $N$ is even}.
\end{cases}
\]
\end{Theorem}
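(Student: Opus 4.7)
The plan is to reformulate the vanishing of $\pi^*\Br(Z_N)$ as a lattice-theoretic condition on the transcendental lattice $T_{X_N}$ and then check this condition on the explicit family $\mX_N$. First, I would read off $T_{X_N}$ for a very general $X_N\in\mX_N$: since $\rho(X_N)=19$, this is a rank-three even lattice, and the construction of $\mX_N$ in Section~\ref{s:lat} should give an explicit Gram matrix depending on $N$. Because the covering involution $\tau$ must act as $-1$ on $H^{2,0}(X_N)$ (otherwise the quotient would carry a non-trivial global two-form and could not be Enriques), it acts as $-1$ on $T_{X_N}$, so $T_{X_N}$ embeds into the anti-invariant sublattice $H^2(X_N,\Z)^-\cong U\oplus U(2)\oplus E_8(2)$.

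Next, I would invoke the criterion characterising $\ker(\pi^*:\Br(Z_N)\to\Br(X_N))$ in terms of discriminant data; this goes back to Beauville~\cite{Beau} and underlies the isolated singular examples in~\cite{GvG,HS}. Using the algebraic K3 identification $\Br(X_N)[2]\cong\mathrm{Hom}(T_{X_N},\Z/2\Z)$, the non-trivial class in $\Br(Z_N)\cong\Z/2\Z$ pulls back to zero precisely when a distinguished homomorphism, determined by the $\tau$-equivariant embedding $T_{X_N}\hookrightarrow H^2(X_N,\Z)^-$ together with the invariant lattice $H^2(X_N,\Z)^+\cong U(2)\oplus E_8(2)$, vanishes identically on $T_{X_N}$. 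This turns the theorem into a purely arithmetic verification on $T_{X_N}$.

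Finally, I would evaluate the criterion on the explicit $T_{X_N}$. Its discriminant form carries the parameter $N$, and the criterion should reduce to a divisibility or congruence test whose outcome depends only on $N\bmod 2$, giving the claimed dichotomy. The main obstacle is not this final parity check but the setup: the abstract isometry class of $T_{X_N}$ does not by itself determine whether $\pi^*\alpha$ vanishes; one must track the $\tau$-equivariant embedding of $T_{X_N}$ into the K3 lattice and its relation to the covering involution. The isogeny with Kummer surfaces of product type highlighted in the abstract is the natural tool here: it realises $X_N$ in a Shioda--Inose style picture in which $T_{X_N}$, the Enriques involution $\tau$, and the Brauer class all admit concrete descriptions, reducing the parity dichotomy to an explicit computation.
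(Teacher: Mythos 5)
Your plan stalls exactly where the theorem lives. You reduce everything to checking that ``a distinguished homomorphism'' on $T_{X_N}$ ``should reduce to a divisibility or congruence test whose outcome depends only on $N\bmod 2$'' --- but that parity check \emph{is} the statement to be proved, and you never identify the homomorphism or evaluate it. Worse, the route you choose makes this harder than it needs to be: as you yourself observe, the isometry class of $T_{X_N}$ does not determine the answer, so you are forced to track the $\tau$-equivariant embedding of $T_{X_N}$ into $\Lambda_{K3}$ and relate it to the invariant lattice $U(2)+E_8(-2)$, and your proposal gives no concrete mechanism for doing so (the appeal to a Shioda--Inose/Kummer picture is a hope, not an argument). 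There is also a small slip: the anti-invariant lattice of an Enriques involution is $U+U(2)+E_8(-2)$, not $U\oplus U(2)\oplus E_8(2)$.

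The paper avoids all of this by using the N\'eron--Severi form of Beauville's criterion (Theorem \ref{thm:Beau}): $\pi^*\Br(Z_N)=0$ iff there is a divisor $D$ with $\tau^*D=-D$ and $D^2\equiv 2\bmod 4$. The construction of $\mX_N$ hands over the anti-invariant part of $\NS(X_N)$ explicitly: it is the orthogonal complement of the invariant lattice $U(2)+E_8(-2)$, computed in \eqref{eq:orto} to be $E_8(-2)+\langle -2N\rangle$. Since $E_8(-2)$ represents only multiples of $4$ and the two summands are orthogonal, an anti-invariant class $D=e+kv$ has $D^2\equiv -2Nk^2\bmod 4$, which is $\equiv 2\bmod 4$ for some $k$ exactly when $N$ is odd. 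That one-line arithmetic is the entire proof; if you want to salvage your transcendental-lattice approach, you would need to carry out the analogous computation of the distinguished linear form on $T_{X_N}=\langle 2N\rangle+U(2)$ explicitly, which amounts to reconstructing the same embedding data the $\NS$-side argument already provides for free.
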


The K3 surfaces in the family $\mX_N$ are generally not Kummer,
but we exploit a geometric construction related to Kummer surfaces 
of $N$-isogenous elliptic curves in Section \ref{s:geom}.
By similar methods we derive for any $N\in\N$ a one-dimensional family $\mY_N$ 
consisting of  Kummer surfaces with Picard number $\rho\geq 19$
and  Enriques involution $\tau$ in Section \ref{s:Km}.

\begin{Theorem}
\label{thm2}
Let $N\in\N$. 
Consider a Kummer surface $Y_N\in \mY_N$ with~$\rho(Y_N)=19$.
Let $\tau$ denote the Enriques involution on $Y_N$.
Then
\[
\pi^* \Br(Y_N/\tau) =
\begin{cases}
\{0\} & \text{ if $N$ is odd},\\
\Z/2\Z & \text{ if $N$ is even}.
\end{cases}
\]
\end{Theorem}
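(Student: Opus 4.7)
The plan is to follow the template of the proof of Theorem \ref{thm}, now in the Kummer setting of $\mY_N$ built in Section \ref{s:Km}. The first step is to recall the lattice-theoretic criterion (essentially going back to Beauville \cite{Beau}) for the vanishing of $\pi^*\Br(Y_N/\tau)$: writing the eigenspace decomposition $H^2(Y_N,\Z)=M^+\oplus M^-$ under $\tau^*$, the non-trivial element of $\Br(Y_N/\tau)=\Z/2\Z$ pulls back to zero precisely when an integral class in $\NS(Y_N)$ with prescribed divisibility relative to $M^\pm$ can be found. This criterion already drove Theorem \ref{thm}, and it should transfer essentially verbatim.

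The second step is to make $\NS(Y_N)$ and the action of $\tau^*$ completely explicit. By the construction of Section \ref{s:Km}, $Y_N$ is the Kummer surface attached to a pair $(E_1,E_2)$ of $N$-isogenous elliptic curves, endowed with an additional elliptic fibration inducing the Enriques involution $\tau$. Thus $\NS(Y_N)$ can be described as an overlattice of the Kummer lattice $\Km$ together with explicit classes coming from the graph of the $N$-isogeny and from the sections of the auxiliary fibration. For a generic $Y_N$ one has $\rho(Y_N)=19$, and the transcendental lattice is an explicit rank-three lattice controlled by the Hodge structure of $E_1\times E_2$ and the Kummer doubling.

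The third step is to apply the criterion to this explicit data. For odd $N$, the plan is to exhibit a class $v\in\NS(Y_N)$ combining a Kummer class with the graph of the $N$-isogeny whose reduction mod $2$ satisfies the required congruence, which happens exactly because $N$ is odd. For even $N$, the same construction fails by a factor of $2$, and one must check that no other integral class in $\NS(Y_N)$ compensates; this non-existence is a parity statement inside the discriminant form of $\NS(Y_N)$.

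The main obstacle, as in Theorem \ref{thm}, will be the even case. Producing a witness for odd $N$ is a direct construction, but ruling out every possible witness for even $N$ requires tracking the interaction modulo $2$ between the $N$-isogeny class and the Kummer $2$-torsion classes, and showing that the resulting obstruction in the discriminant group of $\NS(Y_N)$ cannot be cancelled by any further divisor. Pinpointing where the parity of $N$ intervenes in this obstruction is the principal technical point; once that is isolated, the dichotomy in the theorem follows.
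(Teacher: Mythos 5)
Your overall strategy --- invoke Beauville's criterion (Theorem \ref{thm:Beau}), compute the $\tau^*$-anti-invariant part of $\NS(Y_N)$ explicitly, and locate the parity of $N$ in the squares represented by that lattice --- is exactly the template the paper follows. But there is a concrete error in your setup: $Y_N$ is \emph{not} the Kummer surface of a pair of $N$-isogenous elliptic curves. It is the desingularisation of $X_N/\jmath$ for the Morrison--Nikulin involution $\jmath$ on the Barth--Peters member $X_N$, i.e.\ $\Km(A)$ for an abelian surface $A$ with $T(A)=U(2)+\langle 2N\rangle$, so that $T(Y_N)=U(4)+\langle 4N\rangle$; the Kummer surface of $N$-isogenous curves has transcendental lattice $U(2)+\langle 4N\rangle$ and is reached from $Y_N$ only by a degree-four rational map (two successive $2$-isogenies). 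Consequently your plan to present $\NS(Y_N)$ as the Kummer lattice of $E_1\times E_2$ enlarged by the graph of the $N$-isogeny describes the wrong lattice. The paper instead works with the elliptic fibration $[I_8,8I_2]$ with full two-torsion inherited from the quotient of $\mX$, a section $P$ of height $1/2$, and an extra section $Q$ of height $N$ produced by lattice enhancement, with $\tau$ the composition of the deck transformation and translation by a suitable two-torsion section.

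The second problem is that the proposal stops exactly where the work begins. In the paper the anti-invariant part always contains $E_8(-2)+\langle -4N\rangle$ (the summand $\langle -4N\rangle$ coming from the projection of $[2Q]$), a lattice representing only multiples of $4$; the entire theorem reduces to deciding whether this sublattice has index two in $\NS(Y_N)^{\tau^*=-1}$, equivalently whether $Q$ itself, and not just $[2Q]$, contributes an anti-invariant class. For odd $N>1$ this is settled by exhibiting explicit divisors such as $Q+(1)-O-\frac{N+1}{2}F$ and $Q-(4)-V-\frac{N-1}{2}F$ of squares $-N-3$ and $-N-5$, one of which is $\equiv 2\bmod 4$; for even $N$ a computation in an explicit $\Z$-basis shows the index is one; and $N=1$ requires a separately treated degenerate fibration $I_8+I_4+6I_2$ with its own anti-invariant divisor of square $-6$. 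None of these steps is supplied, and the heuristic that the obstruction lives in the interaction of the ``isogeny class'' with ``Kummer $2$-torsion classes'' does not correctly localize where the parity of $N$ enters (it enters through the components of the $I_2$ fibers met by $Q$). As it stands the argument is a plausible plan rather than a proof.
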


The two theorems show that the Enriques surfaces in question
come in families.
We shall work out one family in detail in Theorem \ref{thm:6}.
The general assumption that the K3 surfaces have non-maximal Picard number $\rho=19$ is 
%present for two reasons:
%First it excludes the degenerations (finite in number)
%where the Enriques involution on the family attains fixed points
fairly mild and not strictly necessary (see Proposition \ref{prop:spec} and  Section \ref{ss:comments}).
%Secondly it ensures that the K3 surfaces in Theorem \ref{thm} indeed are not Kummer (which holds in the generic case for lattice theoretic reasons).

Our main construction uses elliptic fibrations and isogenies of  K3 surfaces, sometimes
in the context of  Shioda-Inose structures.
The afore-mentioned relation with Kummer surfaces of product type
also give rises to applications of our techniques to string theory
and Picard-Fuchs equations (see Section \ref{s: BP elliptic fibr and moduli}).

The paper is organised as follows.
In the next section we review the basic properties of K3 surfaces and Enriques surfaces 
relevant to this work. 
We also discuss Beauville's result on the pull-back of the Brauer group
and the subsequent problem posed by him (\ref{ss: Beauville's problem}).
In Section \ref{s:lat} we construct the families of K3 surfaces $\mX_N$ with Enriques--involution
%, in particular the surfaces $X_N$ mentioned in 
in Theorem \ref{thm} from a lattice-theoretic point of view. These families will be constructed as specialisation of the Barth--Peters 2-dimensional family of K3 surfaces (\cite{BP}). The geometric properties and the moduli space of the Barth--Peters family will be described in Section \ref{s: BP elliptic fibr and moduli}. 
%In the same section explicit equations of some elliptic fibrations on the general member of the family are given. 
We also point out relations to other Enriques surfaces, 
and in particular to Calabi-Yau threefolds and their Picard-Fuchs equations investigated in string theory.
Section \ref{s:geom} is the geometric center of the paper: 
the families $\mX_N$ are reconsidered and constructed in a very geometric way.
% and the proof of Theorem \ref{thm} is given. 
As an illustration, the family $\mX_3$  is analysed in Section \ref{s:N=3}.
We give explicit equations over $\Q$ and describe the specialisations of this family to singular K3 surfaces, relating them with the presence of complex multiplication on certain elliptic curves. In Section \ref{s:Km} we introduce the Kummer families $\mY_N$ and prove Theorem \ref{thm2} by techniques similar to the ones applied in previous sections to the families $\mX_N$.

%This note sketches two related ideas:
%\begin{itemize}
%\item
%a relation between the Barth-Peters family of K3 surfaces from \cite{BP}
%with products of elliptic curves (as noticed by physicists);
%\item
%a one-dimensional family $\mX$ of K3 surfaces with Enriques quotients $\mY$
%such that $\Br(\mY)\cong\Z/2\Z$ pulls back trivially to $\mX$.
%\end{itemize}

\section{Basic Properties}
\label{s:basics}

Throughout this paper we work over number fields (such as the field of rational numbers $\Q$) or over the field of complex numbers $\C$.
For basic properties of K3 surfaces and Enriques surfaces relevant to our paper, we refer to \cite{HS} and the references therein, in particular \cite{BHPV}.
Here we concentrate on the most important ingredients for this paper.
The main motivation for this work is explained in
\ref{ss: Beauville's problem} 
where we discuss Beauville's result on the pull-back of the Brauer group
and the subsequent problem posed by him.

\subsection{Lattices}
A lattice is a pair $(L,b)$ where $L$ is a free $\Z$-module  of finite dimension and $b$ is a symmetric bilinear form defined over $L$ and taking values in $\Z$. We often omit $b$, when the bilinear form is clear by the context. If $(L,b)$ is a lattice, we  denote by $L(n)$, $n\in\Z$, the same $\Z$-module with the bilinear form multiplied by $n$. We will denote by $nL$ the lattice obtained as direct sum of $L$ $n$-times.\\
We denote by $L^{\vee}:=Hom(L,\Z)$ the dual lattice of $(L,b)$. 
The discriminant lattice of $(L,b)$ is defined as $A_L:=L^{\vee}/L$ 
and we denote by $l(A_L)$ the minimum number of its generators. 
The bilinear form $b$ induces a bilinear form on $L^{\vee}/L$ taking values in $\Q\mod\Z$, 
which is called discriminant form. 
The signature $(s_+,s_-)$ of a lattice $(L,b)$ is 
the signature of the $\R$-linear extension of the bilinear form $b$. 
A lattice $(L,b)$ is said to be even if $b(l,l)\in 2\Z$ for each $l\in L$, 
and it is said to be unimodular if $L\simeq L^{\vee}$. 
We recall the following results, due to Nikulin,  that will be used in the sequel.
\begin{prop}\label{prop: uniquely of a lattice in primitve lattice, condition on discriminant}{\rm \cite[Corollary 1.13.3]{Nikulin bilinear}} Let
$L$ be an even lattice with signature $(s_{+},s_{-})$ and
discriminant form $q_L$. If $s_{+}>0$, $s_{-}>0$, and
$l(A_L)\leq rank(L)-2$, 
then up to isometry $L$ is the only lattice
with these invariants.
\end{prop}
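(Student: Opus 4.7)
The statement is Nikulin's well-known uniqueness theorem for indefinite even lattices, so any plan essentially reconstructs his argument. The natural route splits into two stages: first, show that $L_1$ and $L_2$ sharing the signature and discriminant form forces them into the same genus; second, show that the hypothesis $l(A_L) \leq \mathrm{rank}(L) - 2$, together with indefiniteness, forces this genus to consist of a single isometry class.

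The first stage is the easy one. Matching signatures give $L_1 \otimes \R \cong L_2 \otimes \R$, while for even lattices the discriminant form determines the $\Z_p$-isometry class of the localization at every prime $p$ (the residue modulo squares is encoded in $q_L$, and evenness is preserved). This is precisely the definition of being in the same genus.

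For the second stage, I would embed both $L_1$ and $L_2$ primitively into a common indefinite even unimodular lattice $\Lambda$. Existence of such a primitive embedding is provided by Nikulin's primitive embedding theorem, and the rank hypothesis is exactly what allows one to take the orthogonal complement $K := L^{\perp}_{\Lambda}$ to again satisfy $\mathrm{rank}(K) \geq l(A_K) + 2$; uniqueness of $\Lambda$ as an indefinite even unimodular lattice of given signature (Milnor--Serre) lets us use the same $\Lambda$ for both embeddings. An induction on rank, or a direct Eichler-type argument applied to $K_i := L_i^\perp$, then identifies $K_1 \cong K_2$.

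The crux --- and the step I expect to be the main technical obstacle --- is to lift a gluing isomorphism between the discriminant forms of $L$ and $K$ to an honest isometry of $\Lambda$. This amounts to surjectivity of the natural map $O(K) \to O(q_K)$, which follows from strong approximation for spin groups once $K$ is indefinite and satisfies $\mathrm{rank}(K) \geq l(A_K) + 2$. Given this surjectivity, one can arrange an isometry of $\Lambda$ that carries $L_1$ onto $L_2$, and restriction produces the desired isomorphism $L_1 \cong L_2$. The passage from rank condition on $L$ to rank condition on $K$, and the control of the 2-adic discriminant form throughout, are the genuinely delicate ingredients.
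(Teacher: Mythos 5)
The paper does not prove this statement at all: it is quoted verbatim from Nikulin \cite[Corollary 1.13.3]{Nikulin bilinear} and used as a black box, so there is no in-paper argument to compare against. Judged on its own terms, your outline names the right two ingredients: (i) signature plus even discriminant form determines the genus (Nikulin's Theorem 1.9.1; as you note, the prime $2$ is where the care is needed), and (ii) the hypotheses $s_+>0$, $s_->0$, $l(A_L)\leq \mathrm{rank}(L)-2$ force the genus to contain a single class. However, your stage two takes an unnecessary and slightly circular detour. You propose to embed $L_1,L_2$ primitively into a common indefinite even unimodular $\Lambda$ and then compare complements, but the lifting step you identify as the crux --- surjectivity of $O(K)\to O(q_K)$ for an indefinite $K$ with $\mathrm{rank}(K)\geq l(A_K)+2$ --- is exactly part (b) of the very theorem (Nikulin 1.13.2) whose part (a) you are trying to prove, now applied to the complement; nothing guarantees this recursion terminates. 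The standard (and Nikulin's actual) route applies Eichler--Kneser directly to $L$ itself: indefiniteness in rank $\geq 3$ gives one class per spinor genus by strong approximation for the spin group, and the condition $l(A_L)\leq \mathrm{rank}(L)-2$ guarantees, by a local computation of spinor norm groups at each prime, that the spinor genus coincides with the genus. Replacing your embedding step by that direct argument closes the gap; the rest of your sketch is sound.
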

\begin{Theorem}\label{theorem: Nikulin 1.14.4}{\rm \cite[Theorem 1.14.4]{Nikulin bilinear}} Let
$M$ be an even lattice with invariants $(t_+, t_-, q_M)$ and $L$
be an even unimodular lattice of signature $(s_+,s_-)$. Suppose that
$$t_+<s_+,\ \ \ t_-<s_-,\ \ \ l(A_M)\leq \mbox{rank}(L)-\mbox{rank}(M)-2.$$
Then there exists a unique primitive embedding of $M$ in $L$.
\end{Theorem}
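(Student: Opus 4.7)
The plan is to reduce the theorem to a statement about the orthogonal complement. Given a primitive embedding $M \hookrightarrow L$ with $L$ even unimodular, the complement $N := M^\perp$ is itself an even lattice of signature $(s_+-t_+, s_--t_-)$ whose discriminant form is $-q_M$. Moreover, primitive embeddings $M\hookrightarrow L$ modulo the action of $\mathrm{O}(L)$ correspond bijectively to isomorphism classes of pairs $(N,\phi)$, where $N$ is such a complementary lattice and $\phi\colon q_M\to -q_N$ is an anti-isometry of discriminant forms (giving the gluing $L=(M\oplus N)+\Gamma_\phi$), taken up to the action of $\mathrm{O}(M)\times \mathrm{O}(N)$. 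So the proof splits into an existence part and two layers of uniqueness.

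For existence, I would first produce an even lattice $N$ with signature $(s_+-t_+,s_--t_-)$ realising the discriminant form $-q_M$; the signature conditions $t_\pm<s_\pm$ ensure positivity in both slots, and the bound $l(A_M)\le \mathrm{rank}(L)-\mathrm{rank}(M)-2=\mathrm{rank}(N)-2$ is precisely what is needed to invoke the standard realisability statement for discriminant forms (Nikulin's existence result for lattices with prescribed genus). Gluing $M\oplus N$ along the graph of any anti-isometry $q_M\to -q_N$ yields an even unimodular lattice of signature $(s_+,s_-)$; by Milnor's classification of indefinite unimodular lattices, this overlattice is isometric to $L$, producing the desired primitive embedding.

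For uniqueness of the complement, I would apply Proposition \ref{prop: uniquely of a lattice in primitve lattice, condition on discriminant} to $N$: its signature is positive in both slots, and $l(A_N)=l(A_M)\le \mathrm{rank}(N)-2$, so $N$ is determined up to isometry by its invariants. Thus any two primitive embeddings have isomorphic orthogonal complements, and the classification reduces to analysing the gluing data.

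The main obstacle is the uniqueness of the gluing. One must show that any two anti-isometries $q_M\to -q_N$ are related by the $\mathrm{O}(M)\times \mathrm{O}(N)$ action on $A_M\times A_N$. The key input here is that, under the rank hypothesis $\mathrm{rank}(N)\ge l(A_N)+2$, the natural map $\mathrm{O}(N)\to \mathrm{O}(q_N)$ is surjective; this is Nikulin's surjectivity theorem for indefinite lattices of sufficiently large rank relative to their discriminant group, and it subsumes any discrepancy between two choices of $\phi$. Combining surjectivity with the isomorphism $N_1\cong N_2$ of complements then shows that the two embeddings differ by an automorphism of $L$, completing the proof.
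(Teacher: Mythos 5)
The paper does not prove this statement at all --- it is quoted verbatim from Nikulin's paper (Theorem 1.14.4) as a black box --- so there is no in-paper argument to compare against; your sketch correctly reproduces the standard proof from that source: reduce to the orthogonal complement $N$ with discriminant form $-q_M$, get existence of $N$ from the realisability theorem for discriminant forms plus Milnor's classification of indefinite even unimodular lattices, get uniqueness of $N$ from the cited Corollary 1.13.3, and absorb the ambiguity in the gluing isometry via the surjectivity of $\mathrm{O}(N)\to \mathrm{O}(q_N)$, which holds under the same hypothesis $l(A_N)\le \mathrm{rank}(N)-2$ with $N$ indefinite. I see no gaps.
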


The second cohomology group of a K3 surface $W$ with integer coefficients, $H^2(W,\Z)$, with the pairing induced by the
cup product is a lattice isometric to the even unimodular lattice
$\Lambda_{K3}:=2E_8(-1)+ 3U$ (the K3 lattice), where $U$ is
the hyperbolic rank 2 lattice with pairing
$\left[\begin{array}{rr}0&1\\1&0\end{array}\right]$ and $E_8(-1)$
is the rank 8 negative definite lattice associated to the Dynkin diagram $E_8$ (cf.~\cite{BHPV}). The N\'eron Severi group of $W$, $\NS(W)=H^{1,1}(W)\cap H^2(W,\Z)$, is a sublattice of $H^2(W,\Z)$
the rank of which is called Picard number and denoted by $\rho(W)$. The transcendental lattice of $W$, denoted by $T(W)$, is the orthogonal complement of $\NS(W)$ in the lattice $H^2(W,\Z)$.%, and is a sublattice of $H^2(W,\Z)$.\\

\subsection{Lattice enhancements}
\label{construction: specialisation} 
\label{constr}
We explain a lattice-theoretic method in order to determine 
certain subfamilies of a family of K3 surfaces with given transcendental lattice.
The ideas are based on the theory of lattice--polarised K3 surfaces.

Let $W$ be the generic member of a family of K3 surfaces with transcendental lattice $T(W)$.
Here $\rho(W)<20$ as otherwise the family would solely consist of $W$.
Hence $T(W)$ is indefinite,
and we can fix a vector $v$ in $T(W)$ with negative self--intersection. 
By the surjectivity of the period map, there exists a K3 surface $W_2$, such that 
\[
T(W_2)\simeq v^{\perp_{T(W)}}.
\]
The surface $W_2$ is the general member of a subfamily of the family of $W$ of codimension one. 
Clearly the N\'eron--Severi lattice of  $W_2$ is an overlattice of finite index of $\NS(W)+ \langle v\rangle$. 
This can be made precise as follows:
$\NS(W_2)$ is the minimal primitive sublattice of $H^2(W,\Z)\simeq H^2(W_2,\Z)$ containing $\NS(W)+\langle v\rangle$. 

We explain how to find a $\Z$-basis for $\NS(W_2)$.
Recall the following connection between the discriminant forms of $T(W)$ and $\NS(W)$.
Let  $n_i\in \NS(W)$, $d_i\in \mathbb{N}$ 
such that $n_i/d_i$ are generators of the discriminant form of $\NS(W)$.
Then  there exist $t_i\in T(W)$ such that $t_i/d_i$ are generators of the discriminant form of $T(W)$ and $(n_i+t_i)/d_i$ are in $H^2(W,\Z)$. 
In practice, we can always choose $v$ primitive and set $v=t_1$ (possibly with $d_1=1$).
Then 
$\NS(W)+\langle v\rangle$ has index $d_1$ in $\NS(W_2)$, 
and the full N\'eron-Severi lattice can be obtained from $\NS(W)+\langle v\rangle$ by adjoining the vector $(n_1+t_1)/d_1\in \NS(W_2)$.

\subsection{Elliptic fibrations}
\label{ss:ell}

We will extensively use
elliptic fibrations on K3 surfaces.
To give an elliptic fibration, it suffices to exhibit a divisor $D$ of Kodaira type, 
thus coinciding with one of the singular fibers.
Then any irreducible curve meeting $D$ transversally in exactly one point gives a section of the fibration.
Elliptic fibrations with section are often called jacobian;
they can be completely understood in terms of Mordell-Weil lattices \cite{ShMW}.
A jacobian elliptic fibration on a K3 surface $X$
is determined by a direct summand of the hyperbolic plane $U$ in the 
N\'eron-Severi group:
\[
\NS(X) = U + L.
\]
Such decompositions can be generally classified 
by a gluing method going back to Kneser \cite{Kne}
and introduced to the K3 context by Nishiyama \cite{Nishi}.
A singular fiber gives rise to a negative-definite root lattice of ADE-type 
by omitting the identity component (met by the zero section $O$)
and drawing the intersection graph of the remaining components 
(which are all $(-2)$-curves and thus yield roots).
The trivial lattice of the fibration, generated by zero section and fiber components,
thus takes the shape $U+$ (root lattices of ADE-type).
Conversely the singular fibers are encoded in the roots of $L$, i.e.~in the root lattice $L_\text{root}$,
and the remainder of $\NS(X)$ comes from sections.
In detail, let $L_\text{root}'$ denote the primitive closure of $L_\text{root}$ in $L$.
Then the torsion in the Mordell-Weil group is given by
\[
\MW(X)_\text{tor} \cong L_\text{root}'/L_\text{root},
\]
and the Mordell-Weil lattice is $\MWL(X) = L/L_\text{root}'$.
Here the orthogonal projection with respect to the trivial lattice in $\NS(X)\otimes \Q$
endows $\MWL(X)$  with the structure of a definite lattice (not necessarily integral) by \cite{ShMW}.

Every jacobian elliptic fibration $X\ra C$ 
corresponds to its generic fiber,
an elliptic curve defined over the field of functions of $C$.
In consequence $X$ inherits certain automorphisms from its generic fiber. % (and will be useful in the following): 
Every elliptic curve admits a hyperelliptic involution. 
Fiberwise this extends to the elliptic surface.
We denote the resulting involution by $-id$.  
% The (extended) Weierstrass equation of an elliptic curve is of the form $y^2=x^3+ax^2+bx+c$. 
% The involution $-id$ corresponds to the map $(y,x)\ra (-y,x)$ in the coordinates of the extended Weierstrass equation of the elliptic curve.
Moreover on an elliptic curve there is translation by a point: 
Let $P$ be a rational point on an elliptic curve, 
than the induced automorphism $t_P$ send a point $Q$ to the point $Q+P$, 
where $+$ is the sum with respect to group law of the elliptic curve.
Sections, i.e.~points on the generic fiber thus induce automorphisms on the elliptic surface.

Another fundamental construction used in the following is quadratic twisting,
often also related to quadratic base change and the deck transformations. 
For background the reader is referred to \cite[Section 3.3]{HS}.
%{\bf Do you think we have to explain also the deck transformation and the quadratic twist here? -- Not decided yet}
 
%Do you want to add something on the relations among the divisors of Kodaira type and the reducible fibers?}

\subsection{Picard number \& Shioda-Inose structures}
\label{ss:SI}

In general the Picard number is far from a birational invariant, since one can always consider blow-ups.
In contrary,
for complex K3 surfaces (which are by definition minimal) the Picard number is much more than that:
by \cite{Inose} it is preserved by rational dominant maps because the Hodge structure on the transcendental lattice is preserved.
This is the main reason why
K3 surfaces of high Picard number (at least $17$) can often be studied through Kummer surfaces.
Thus the K3 surfaces in Theorems \ref{thm} and \ref{thm2}
share the structure of Kummer surfaces to a strong extent.

The most prominent case of this situation consists of a Shioda-Inose structure:
After \cite{Mo}
we ask that the K3 surface $X$ admits a rational map of degree two to a Kummer surface $\Km(A)$
such that the intersection form on the transcendental lattice is multiplied by two:
\begin{eqnarray}
\label{eq:SI}
T(\Km(A)) = T(X)(2).
\end{eqnarray}
Equivalently one has $T(A) = T(X)$.
In particular, $\Km(A)$ is the quotient of $X$ by a Nikulin involution $\jmath$.
An equivalent criterion for \eqref{eq:SI} is that $\jmath$ exchanges two perpendicular divisors of type $E_8(-1)$ (see \cite{Mo}).
Such involutions are called
Morrison--Nikulin involutions.

\subsection{Enriques involution}

Recall that an Enriques involution is a fixed point free involution $\tau$ on a K3 surface $X$.
The quotient $X/\tau$ is called Enriques surface.
Conversely we recover $X$ from $Y$ through the universal cover
\[
\pi: X\to Y.
\]
The universal cover is directly related to the canonical divisor $K_Y$ 
which gives the two-torsion in $\NS(Y)$.
Pulling back $\Num(Y)=\NS(X)/\langle K_Y\rangle \cong U + E_8(-1)$ via $\pi^*$,
we obtain a primitive embedding
\begin{eqnarray}
\label{eq:lp}
U(2) + E_8(-2) \hookrightarrow \NS(X).
\end{eqnarray}
By the Torelli theorem, Enriques involutions can be characterised by the lattice polarisation \eqref{eq:lp}
together with the additional assumption that the orthogonal complement of $U(2)+E_8(-2)$ in $\NS(X)$ does not contain any roots. 
(This makes sure that the involution determined by \eqref{eq:lp} has no fixed points.)
Thus we can study the moduli of K3 surfaces with Enriques involution through the lattice-polarisation \eqref{eq:lp}.

For instance,
any Kummer surface admits an Enriques involution by \cite{Keum}.
Contrary to this, Shioda-Inose structures do generally not accommodate Enriques involutions.
% To see this, we use the following easy lemma:
% 
% \begin{Lemma}
% \label{Lem:T-U}
% Let $X$ be a K3 surface with $T(X)\cong U+L$.
% If $X$ admits an Enriques involution, then $L$ is two-divisible.
% \end{Lemma}
% 
% \begin{proof}
% Complimentary to \eqref{eq:lp},
% we have a primitive embedding
% \begin{eqnarray}
% \label{eq:T-U}
% T(X) \hookrightarrow U+U(2)+E_8(-2).
% \end{eqnarray}
% If $T(X)$ contains a direct summand $U$ as above,
% then \eqref{eq:T-U} induces an embedding $L\hookrightarrow U(2)+E_8(-2)$.
% This implies the claim.
% \end{proof}
% 
% \begin{cor}
% Let $X$ fit into a Shioda-Inose structure for a Kummer surface 
% which is either general or of product type for non-isogenous elliptic curves.
% Then $X$ does not admit an Enriques involution.
% \end{cor}
% 
% \begin{proof}
% In the first case, one has $T(A)=2U+\langle -2d\rangle$ for some $d\in\N$;
% in the second case $T(A)=2U$.
% Since $T(X)=T(A)$, the claim follows from Lemma \ref{Lem:T-U}.
% \end{proof}

\subsection{Enriques surfaces and Brauer groups}
\label{ss: Beauville's problem}

The Brauer group of a smooth projective surface can be defined 
in \'etale cohomology as $\Br(S) = H_\text{\'et}^2(S,\mathbb G_m)$.
The Brauer group is a birational invariant that encodes very subtle information.
For instance if $S$ is a complex surface,
then $\Br(S)$ contains a subgroup $\Br(S)'$ 
which is dual to the  transcendental lattice in a suitable sense,
and the quotient $\Br(S)/\Br(S)'$ is isomorphic to the torsion in $H^3(S,\Z)$.

For a complex Enriques surface $Y$ it follows that
\[
\Br(Y)=H_\text{\'et}^2(Y, \mathbb G_m)=
%H_\text{\'et}^2(Y,\OO_Y^*) = 
\Z/2\Z.
\]
Consider the K3 surface $X$ given by the universal cover $\pi: X \to Y$.
The important
problem how $\Br(Y)$ pulls back to the K3 surface $X$ via $\pi^*$
was recently solved by Beauville:

\begin{Theorem}[Beauville {\cite{Beau}}]
\label{thm:Beau}
Generally $\pi^*\Br(Y)=\Z/2\Z$ holds.
One has $\pi^*\Br(Y)=\{0\}$ if and only if there is a divisor $D$ on $X$ such that $\tau^*D=-D$ in $\NS(X)$ and $D^2\equiv 2\mod 4$.
\end{Theorem}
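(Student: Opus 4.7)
The main tool is the Leray--Hochschild--Serre spectral sequence for the \'etale Galois double cover $\pi\colon X\to Y$ with group $G=\langle\tau\rangle\cong\Z/2\Z$ and coefficient sheaf $\mathbb{G}_m$,
\[
E_2^{p,q}= H^p(G,\,H^q_{\text{\'et}}(X,\mathbb{G}_m))\;\Longrightarrow\; H^{p+q}_{\text{\'et}}(Y,\mathbb{G}_m).
\]
Since $H^0(X,\mathbb{G}_m)=\C^*$ is uniquely 2-divisible with trivial $G$-action, one has $H^2(G,\C^*)=0$ and $H^1(G,\C^*)\cong\mu_2$ (the latter accounting for $K_Y\in \NS(Y)$). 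Together with the identification of the degree-two edge map with the pullback $\pi^*\colon \Br(Y)\to \Br(X)^G$, this forces
\[
\ker\pi^* \;=\; E_\infty^{1,1} \;=\; \ker\bigl(d_2\colon H^1(G,\NS X)\to H^3(G,\C^*)=\mu_2\bigr).
\]

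Next I would identify $H^1(G,\NS X)$ concretely. Since $\tau^*$ is an involution and $\NS(X)$ is torsion-free on a K3 surface, standard Tate cohomology gives
\[
H^1(G,\NS X) \;=\; \frac{\{D\in \NS(X)\colon \tau^*D=-D\}}{\{D'-\tau^*D'\colon D'\in \NS(X)\}},
\]
so cohomology classes are represented precisely by antiinvariant divisor classes on $X$.

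The technical heart of the proof, and the main obstacle, is to compute the differential $d_2$ explicitly. My plan is to use a \v{C}ech cocycle realisation of the Brauer generator of $Y$ (for instance as a cup product involving the cover class $\eta\in H^1(Y,\mu_2)$) and trace its behaviour through the spectral sequence. The outcome should be that $d_2([D])$ depends only on $D^2\bmod 4$ and vanishes precisely when $D^2\equiv 2\pmod 4$. One must first check well-definedness on cohomology classes: for a coboundary $E=D'-\tau^*D'$ we have $E^2=2(D')^2-2\,D'\cdot\tau^*D'$, and using that K3 lattices are even while $\NS(X)^+\subseteq U(2)\oplus E_8(-2)$ takes values in $4\Z$, one obtains $E^2\in 4\Z$, so the residue $D^2\bmod 4$ descends to $H^1(G,\NS X)$.

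Combining everything: $\pi^*\Br(Y)=0$ iff $\ker\pi^*=\Br(Y)\cong\Z/2\Z$ iff $\ker(d_2)\neq 0$, iff there exists $D\in \NS(X)$ with $\tau^*D=-D$ and $D^2\equiv 2\pmod 4$. For the ``generically'' part of the statement, a very general K3 surface with Enriques involution has $\NS(X)=U(2)\oplus E_8(-2)$, which is entirely $\tau^*$-invariant; so $H^1(G,\NS X)=0$ and $\pi^*$ is injective, giving $\pi^*\Br(Y)=\Br(Y)\cong\Z/2\Z$.
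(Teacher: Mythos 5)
First, note that the paper does not prove this statement at all: Theorem \ref{thm:Beau} is quoted verbatim from Beauville's article \cite{Beau} and used as a black box. So your proposal has to be measured against Beauville's original argument. Your general frame is the right one and agrees with his: the Hochschild--Serre spectral sequence for the free $\Z/2\Z$-action, the vanishing $H^2(G,\C^*)=0$ and $H^3(G,\C^*)\cong\Z/2\Z$, the identification $\ker\pi^*\cong E_\infty^{1,1}=\ker\bigl(d_2\colon H^1(G,\NS X)\to\Z/2\Z\bigr)$, the description of $H^1(G,\NS X)$ as anti-invariant classes modulo $D'-\tau^*D'$, the check that $D^2\bmod 4$ descends to this quotient, and the treatment of the generic case are all correct.

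The gap is exactly where you flag "the main obstacle", and the outcome you predict for $d_2$ cannot be right. A spectral-sequence differential is a group homomorphism, so $d_2(0)=0$; a rule of the form "$d_2([D])=0$ iff $D^2\equiv 2\bmod 4$" would force $d_2(0)=1$ since $0^2\equiv 0$. More seriously, $[D]\mapsto \tfrac12 D^2\bmod 2$ is a \emph{quadratic} form on the $\F_2$-vector space $H^1(G,\NS X)$, with associated bilinear form $([D_1],[D_2])\mapsto D_1\cdot D_2\bmod 2$, and this bilinear form need not vanish; hence no affine modification of it can coincide with the linear form $d_2$ once $\dim_{\F_2}H^1(G,\NS X)\geq 3$. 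What the theorem actually asserts is the equivalence $\ker d_2\neq 0\iff q\not\equiv 0$, where $q([D])=\tfrac12 D^2\bmod 2$, and this requires two inputs that your sketch does not supply: an honest computation of $d_2$ (Beauville does this by choosing an isomorphism $\tau^*L\cong L^{-1}$ and tracking the resulting sign), and lattice-theoretic control of the anti-invariant part of $H^2(X,\Z)$ for an Enriques involution (for instance, $H^1(G,H^2(X,\Z))\cong U/2U$ carries the hyperbolic quadratic form). In particular, whenever $\dim_{\F_2}H^1(G,\NS X)\geq 2$ the kernel of $d_2$ is automatically nonzero, so one must still \emph{produce} an anti-invariant $D$ with $D^2\equiv 2\bmod 4$ --- and conversely, when $q\equiv 0$ one must show $H^1(G,\NS X)$ has dimension at most one. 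Neither implication follows from any pointwise formula for $d_2$, so as written the plan would not close.
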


In other words, the Enriques surfaces with Brauer group pulling back identically to zero to the covering K3 surface lie on countably many hyperplanes in the moduli space of Enriques surfaces (cut out by the conditions of \eqref{eq:prim} below).

\textbf{Problem (Beauville):} 
{\sl
\begin{enumerate}
\item
Give explicit examples of Enriques surfaces such that $\pi^*(\Br(Y))=\{0\}$. 
\item
Are there such surfaces defined over $\Q$?
\item
If so, exhibit some.
\end{enumerate}}

Note that the main problem in (2) consists in the possibility that the countable number of Enriques surfaces in question might avoid the specific hyperplanes.

In the meantime, we have seen isolated examples as singular K3 surfaces answering all three questions (cf.~\cite{GvG}, \cite{HS}),
but no explicit families yet.
Let us emphasise that here we ask for explicit defining equations as opposed to (moduli spaces of) K3 surfaces determined by a lattice polarisation.
We will make this difference clear below.
Single examples over $\Q$ have been exhibited independently by one of us with B.~van Geemen \cite{GvG} (see Section \ref{ss:GvG}) and by the other with K.~Hulek in \cite{HS}.
Note that the first example is a Kummer surface while the second is not.

Our aim is to exhibit explicit families of K3 surfaces with Enriques involution as above.
Abstractly this is easily achieved lattice-theoretically as we only require the K3 surface $X$ to admit a primitive embedding
\begin{eqnarray}
\label{eq:prim}
U(2) + E_8(-2) + \langle-2N\rangle \hookrightarrow\NS(X)
\end{eqnarray}
for some odd $N>1$ such that the orthogonal complement of $U(2)+E_8(-2)$ in $\NS(X)$ 
does not contain any $(-2)$ vectors.
However, it is non-trivial to exhibit explicit geometric constructions of such surfaces,
let alone find explicit equations.
%As an illustration we first restrict to a particular case 
%that is at the same time directly accessible and of independent interest.

\section{Lattice Enhancements}
\label{s:lat}
In this section we will construct the surfaces $X_N$, 
appearing in Theorem \ref{thm} using  lattice theory 
and in particular the construction described in the Section \ref{construction: specialisation}. 
The surfaces $X_N$ will be the generic members of the family $\mX_N$ 
of the $\left( U(2)+2E_8(-1)+\langle-2N\rangle\right)$-polarised K3 surfaces. 
As this lattice admits a unique embedding into $\Lambda_{K3}$ 
up to isometries (cf. Theorem \ref{theorem: Nikulin 1.14.4}), 
K3 surfaces with this polarisation form a unique one-dimensional family.
It will be convenient to view $\mX_N$ as subfamilies of 
% other families of K3 surfaces which will be introduced. 
% In particular we will start considering a $3$ dimensional family of K3 surfaces $S_{(1,2)}$. 
% This family does not admit an Enriques involution, 
% but some automorhpisms which will be useful in the following construction. 
% It specialize to 
the Barth--Peters family $\mX$.
This is a 2-dimensional family of K3 surfaces 
%cf. Section \ref{ss: BP family 1}) 
which admits an Enriques involution with exceptional properties \cite{BP} (see also \cite{MN}, \cite{HS}). 
The Barth--Peters family $\mX$ specializes to the 1-dimensional families $\mX_N$ (cf. Section \ref{ss:X_N}).

\subsection{Barth--Peters family $\mX$}\label{ss: BP family 1}  
% 
% Here we introduce the Barth--Peters family (cf. \cite{BP}), 
% in order to prove that it appears as specialisation of the family $S_{(1,2)}$:
There is a unique two-dimensional family of K3 surfaces $\mX$ such that generally
\begin{eqnarray}
\label{eq:NS-X}
\NS(\mX)=U(2)+2E_8(-1).
\end{eqnarray}
Here the primitive embedding \eqref{eq:prim} is achieved  
by realising $E_8(-2)$ diagonally in the two copies of $E_8(-1)$.
By construction, this induces an Enriques involution $\tau$ on the general member $X$ of $\mX$.

There are many ways to exhibit $\mX$ geometrically, see \cite{BP}, \cite{MN}.
For instance, one can give it as two-dimensional family of elliptic fibrations
\begin{eqnarray}\label{formula:Weierstrass form S_2} 
\;\;\; \;\;\; \mX:\;\;\; y^2=x(x^2+a(t)x+1),\ \ \
a(t)=a_0+a_2t^2+t^4\in k[t].
\end{eqnarray}
There is a 2-torsion section $(0,0)$ and a reducible fiber of Kodaira type $I_{16}$ at $\infty$.
The above fibrations are quadratic base changes from a one-dimensional family $\mathcal R$
of rational elliptic surfaces
that can be recovered as quotient by the involution $\imath$ induced by $t\mapsto -t$.
The composition of $\imath$ and translation by $(0,0)$ is an Enriques involution
(the classical case of the more general construction from \cite{HS}).

% 
% \begin{Proposition}
% \label{prop: sp SI 12 to barth peters} 
% The Barth--Peters family is a specialisation of the family of K3 surfaces $S_{(1,2)}$
% %with a Shioda--Inose structure related to a $(1,2)$--polarised Abelian surface 
% obtained as in \ref{construction: specialisation} by choosing the vector $v$ to be $v_X:=(1,-2,0,0,0)\in \langle -4\rangle+ U+ U\simeq T(S_{(1,2)})$.
% \end{Proposition}
% 
% \proof Let $v_X$ be as in the statement. Its orthogonal complement in $T(S_{(1,2)})$ is generated by $(1,0,-2,0,0)$, $(0,1,0,0,0)$, $(0,0,0,1,0)$, $(0,0,0,0,1)$.
% Buy inspection, this lattice  is isometric to $U(2)+ U$. 
% By construction, 
% it is the transcendental lattice of the specialisation.
% As such it coincides with $T(X)$.
% Since a family with this transcendental lattice necessarily is unique, the proposition follows.
% \eprf

% We relate Proposition \ref{prop: sp SI 12 to barth peters} to the N\'eron-Severi group of the Barth--Peters family.
% The discriminant group of $T(S_{(1,2)})$ is generated by $(1/4,0,0,0,0)\in (\langle -4\rangle+ U+ U)\otimes \mathbb{Q}$. 
% Hence by \ref{construction: specialisation} a $\Z$-basis for $\NS(X)$
% %the N\'eron--Severi lattice of the generic member of the Barth--Peters family 
% is given by the $\Z$-basis of $\NS(S_{(1,2)})$ and the class $n_S:=(v_X+L)/4$. 
% 
% Next we interpret the new class $n_S$ in terms of the elliptic fibration with a fiber of type $I_{16}$
% that $S_{(1,2)}$ induces on $X$.
% Consider the alternative class 
% \[
% z:=n_S-3s_S-6C_0-5C_1-4C_2-3C_3-2C_4-4C_{15}-2C_{14}.
% \]
% One easily checks that
In order to exhibit a basis of $\NS(\mX)$,
we note that the rational elliptic surfaces in $\mathcal R$ generally have Mordell-Weil rank one.
Pulling back a generator,
we obtain a section $Q$ %meets the reducible fiber in the component $C_4$, but which does not meet the zero section. 
on $\mX$ of height $h(Q)=1$.
Comparing discriminants
we find that the Mordell-Weil lattice of this elliptic fibration is generated by $Q$: $\MWL(\mX)\simeq [1]$.

\subsection{The subfamilies $\mX_N$ of $\mX$}
\label{ss:subfam}
\label{ss:X_N}
%We continue with another specialisation.
Starting from the Barth--Peters family $\mX$, we want to describe the K3 surfaces with Picard number 19
and N\'eron-Severi lattice isometric to $U(2)+2E_8(-1)+\langle -2N\rangle$. 
Under a very mild condition, the Enriques involution specialises also from $\mX$ as we will see in \ref{ss:N}. 

\begin{prop}
\label{prop: sp X to XN} 
Let $X_{N}$ be the generic member of the subfamily of the Barth--Peters family $\mX$ 
obtained as in  \ref{construction: specialisation} by
choosing the vector $v$ to be $v_N:=(1,-N,0,0)\in U+ U(2)\simeq T(X)$. 
Then $\NS(X_{N})\simeq \NS(X)+ \langle -2N\rangle$, $T(X_N)\simeq \langle 2N\rangle + U(2)$. 
\begin{enumerate}
\item
If $N>1$, then $X_{N}$ admits an elliptic fibration with singular fibers $I_{16}+8I_1$, 
Mordell-Weil group $(\Z)^2\times \Z/2\Z$ and Mordell--Weil lattice $MWL\simeq \left[\begin{array}{ll}1&0\\0&2N\end{array}\right]$. 
\item
If $N=1$, then the induced elliptic fibration on $X_1$ has singular fibers $I_{16}+I_2+6I_1$ 
and the Mordell--Weil group is $\Z\times \Z/2\Z$. 
\end{enumerate}
\end{prop}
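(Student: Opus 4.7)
The plan is to apply the lattice--enhancement recipe of Section~\ref{construction: specialisation}, then translate the resulting N\'eron--Severi lattice into elliptic fibration data.

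I would first identify $T(X)$ for a generic $X\in\mX$. Since $\NS(X)\cong U(2)+2E_8(-1)$ has signature $(1,17)$ with $l(A_{\NS(X)})=2$, the hypotheses of Theorem~\ref{theorem: Nikulin 1.14.4} are met, yielding a unique primitive embedding $\NS(X)\hookrightarrow\Lambda_{K3}$; computing the signature and discriminant form of the orthogonal complement and applying Proposition~\ref{prop: uniquely of a lattice in primitve lattice, condition on discriminant} forces $T(X)\cong U+U(2)$. In these coordinates $v_N=(1,-N,0,0)$ is primitive, has $v_N^2=-2N$, and lies entirely in the unimodular summand $U$. Its orthogonal complement in $T(X)$ is spanned by $w=(1,N,0,0)\in U$ (with $w^2=2N$) together with the full $U(2)$ summand, so $T(X_N)\cong\langle 2N\rangle+U(2)$.

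Next I would run the recipe of~\ref{construction: specialisation}. Since $v_N$ lies in the unimodular summand of $T(X)$, we may take $d_1=1$, so no further gluing is required and
\[
\NS(X_N)\;=\;\NS(X)\oplus\langle v_N\rangle\;\cong\;U(2)+2E_8(-1)+\langle -2N\rangle.
\]
As a consistency check, both $A_{\NS(X_N)}$ and $A_{T(X_N)}$ equal $(\Z/2\Z)^2\oplus\Z/2N\Z$.

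For the elliptic fibration, I would specialise the Barth--Peters fibration~\eqref{formula:Weierstrass form S_2} to $X_N$: its fiber class, zero section, and the $A_{15}$ formed by the non--identity components of $I_{16}$ all lie in $\NS(X)\subset\NS(X_N)$. Writing $\NS(X_N)=U+L'$ for the associated decomposition, one has $L'=L\oplus\langle v\rangle$, where $L=U^{\perp_{\NS(X)}}$ is the rank--$16$ frame of the Barth--Peters fibration and $v$ is the new primitive class with $v^2=-2N$, orthogonal to $L$ because $v\in T(X)$. A root $l+mv\in L'$ then satisfies $l^2-2m^2N=-2$ with $L$ negative definite, forcing $m^2N\le 1$. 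For $N>1$ this leaves $m=0$, so $L'_\text{root}=L_\text{root}=A_{15}$ and the singular fibers remain $I_{16}+8I_1$; the primitive closure of $A_{15}$ inside $L'$ coincides with that inside $L$, so the Mordell--Weil torsion stays at $\Z/2\Z$, and the orthogonality $\langle v\rangle\perp L$ yields $\MWL(X_N)\cong\MWL(\mX)\oplus\langle 2N\rangle\cong\bigl(\begin{smallmatrix}1&0\\0&2N\end{smallmatrix}\bigr)$. For $N=1$ the additional solution $m=\pm 1,\ l=0$ contributes the extra root $\pm v$, so $L'_\text{root}=A_{15}\oplus A_1$ and one nodal $I_1$ is promoted to an $I_2$, giving $I_{16}+I_2+6I_1$; the primitive closure again only enlarges the $A_{15}$ summand, leaving $\MWL(X_1)\cong[1]$ and torsion $\Z/2\Z$.

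I expect the main obstacle to be the primitive--closure bookkeeping for $L'_\text{root}\subset L'$: a priori, mixed fractional elements could enlarge the MW torsion or shift the MW lattice across the splitting $L\oplus\langle v\rangle$. The cleanest way to exclude this is Shioda's discriminant formula $|\disc\NS|=|\disc\Triv|\cdot\disc\MWL/|\MW_\text{tor}|^2$: both sides evaluate to $8N$ in each case, leaving no room for additional torsion or index and confirming the Mordell--Weil data of (1) and (2).
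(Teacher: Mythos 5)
Your argument is correct. Its first half coincides with the paper's: both compute $T(X_N)=v_N^{\perp_{T(X)}}\cong\langle 2N\rangle+U(2)$ and then rule out a proper overlattice of $\NS(X)+\langle-2N\rangle$ by a discriminant comparison (both sides have discriminant $\pm 2^3N$; your ``$d_1=1$'' remark is the same observation in the language of \ref{construction: specialisation}). The divergence is in the Mordell--Weil analysis. The paper stays with the explicit fibration \eqref{formula:Weierstrass form S_2} and, for $N>1$, exhibits the new section directly as the divisor class $u:=NF+O+v_N$, checking $u^2=-2$, $u\cdot O=N-2$, $u\cdot Q=N$ and that $u$ meets the identity component of $I_{16}$; the height pairing then gives $\MWL\cong\mathrm{diag}(1,2N)$ at once. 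For $N=1$ it notes that $v_1$ is a root orthogonal to the fiber class and to the $I_{16}$ components, hence produces a new $I_2$. You instead work abstractly with the frame $L'=L\oplus\langle v_N\rangle$, count roots to pin down $L'_{\text{root}}$, track the primitive closure, and confirm everything with the Shioda--Tate discriminant formula. Both routes are valid; yours has the advantage of actually proving, rather than tacitly assuming, that no extra roots appear for $N>1$ (so the configuration really is $I_{16}+8I_1$), while the paper's explicit class $u$ is the object that gets reused in the later geometric sections. The one step that is equally informal in both treatments is identifying the new $A_1$ summand at $N=1$ with a fiber of type $I_2$ rather than $III$; that distinction is settled by the Weierstrass model (the quadratic base change ramifies there), not by the lattice.
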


\proof The transcendental lattice of $X_{N}$ is the orthogonal complement in $T(X)$ to $v_N$, 
so it is isometric to $\langle 2N\rangle+ U(2)$. 
In particular the transcendental lattice has discriminant $2^3N$. 
The N\'eron--Severi is an overlattice of $\NS(X)+ \langle -2N\rangle$ of finite index. 
Since the discriminant of $\NS(X)+ \langle -2N\rangle$ is $-2^3N$, 
we conclude that $\NS(X_{N})\simeq \NS(X)+ \langle -2N\rangle$. We denote by $F$ the class of the fiber of the elliptic fibration  \eqref{formula:Weierstrass form S_2} on $X$ and by $O$ the class of the zero section.
The elliptic fibration on $X$ specializes to an elliptic fibration on $X_N$. 
If $N>1$, then the class $u:=NF+O+v_N$ corresponds to a section of infinite order on $X_N$. 
The section $u$ meets the reducible fiber in the identity component $C_0$ of the $I_{16}$ fiber, 
$u\cdot O=N-2$, $u\cdot Q=N$. 
This gives the Mordell--Weil lattice.

If $N=1$ the class $v_N$ corresponds to a class with self--intersection $-2$ 
which is orthogonal to the class of the fiber and to the fiber components of $I_{16}$.
Hence on the fibration there is another reducible fibers, which is of type $I_2$.  
So the elliptic fibration on $X_1$
has $I_{16}+I_2+6I_1$ as singular fibers  and the Mordell--Weil group is $\Z\times \Z/2\Z$ as before. 
%In the orthogonal of $U(2)+ E_8(-2)$ in $NS(X_1)$ there is a $-2$ curve, and hence the Enriques involution on $X$ does not become an Enriques involution on $X_1$.
\eprf

%\begin{rem}\label{rem: ??}{\rm 
%We will see that $X$ is obtained by a generic quadratic base change from a rational elliptic surface $R$ (cf. Section \ref{ss: BP elliptic 2III^* 2I2}).
%Geometrically the specialisations from $X$ to $X_N$ correspond to particular choices of the quadratic base change from the rational elliptic surface $R$. 
%Indeed choosing  the two fixed points of the double cover $\mathbb{P}^1\ra \mathbb{P}^1$ 
%corresponding to the base change appropriately, 
%one can obtain different fibers (this is the case of $X_1$, 
%where $R$ has a fiber of type $I_8$ and a fiber of type $I_1$ at the branch points) 
%or a new section (coming from a multisection on $R$ which splits). 
%Hence the Enriques involution on $X_N$ is again the composition of the Morrison--Nikulin involution and the involution of the double cover. If $N=1$ this composition is again a non-symplectic involution on $X_1$, but it fixes one component of the fiber $I_2$ and hence it is not an Enriques involution.
%{\bf I'm not completely sure of this geometric construction... The question is: is it clear that $X_N$ is again a double cover of a rational elliptic fibration? In particular is it clear that $X_N$ is a double cover of the rational elliptic fibration with fibers $I_8+4I_1$? Is it clear this at least for $N=3$?}}\end{rem}

We will work out an explicit geometric construction of $X_N$ for odd $N$ in Section \ref{s:geom}.
Meanwhile this section is concluded with an investigation how the Enriques involution $\tau$ on $X$ specialises to $X_N$.

\subsection{Enriques involution on $X_N$}
\label{ss:N}

On K3 surfaces, specialisation preserves many properties such as automorphisms.
Along these lines, an Enriques involution will specialise to an involution, but it need not specialise to an Enriques involution.
That is, the specialised involution need not be fixed point free anymore.
This subtlety is based on the fact that the moduli space of Enriques surface is exactly the moduli space of $\left(U(2)+E_8(-2)\right)$--polarised K3 surfaces with countably many hyperplanes removed.
Whence one has to avoid the situation where the specialisation hits (or even sits inside) those hyperplanes.

The hyperplanes correspond to the presence of some $(-2)$ curve in the orthogonal complement of $U(2)+E_8(-2)$ inside $\NS$.
In particular, we have seen an instance of an Enriques involution not specialising in \ref{ss:subfam}:
For $N=1$, the singular fibers degenerate on $X_N$ to form an $I_2$ fiber 
(where the corresponding base change ramifies).
Naturally this gives a $(-2)$ curve in the specified orthogonal complement, 
so the family of $X_1$ lies completely in one such hyperplane.
We will now check that this does not happen for $N>1$:

\begin{prop}
The Enriques involution $\tau$ on the Barth--Peters family $\mX$
specialises to an Enriques involution on the subfamily $\mX_N$ if and only if $N>1$.
\end{prop}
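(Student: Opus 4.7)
My plan is to apply the Torelli-type criterion for Enriques involutions recalled in \ref{ss:N}: the specialisation of $\tau$ from $\mX$ to $X_N$ will be an Enriques involution precisely when the primitive embedding $U(2)+E_8(-2)\hookrightarrow \NS(X_N)$ inherited from $\NS(X)$ has orthogonal complement free of $(-2)$-vectors. So the first step is to exhibit this orthogonal complement explicitly. Using the direct-sum decomposition $\NS(X_N)\simeq U(2)+2E_8(-1)+\langle -2N\rangle$ from Proposition \ref{prop: sp X to XN}, and recalling from \ref{ss: BP family 1} that $E_8(-2)$ sits diagonally inside $2E_8(-1)$, I expect the complement to come out to $E_8(-2)\oplus\langle -2N\rangle$, where the first summand is the antidiagonal copy of $E_8(-2)$.

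The second step will be a short norm computation. Writing a general element of the complement as $v+kw$ with $v\in E_8(-2)$, $k\in\Z$, and $w$ a generator of $\langle -2N\rangle$, its norm is $v^2-2Nk^2$. Since $E_8(-1)$ is even with minimum $-2$, the rescaling $E_8(-2)$ takes norms in $4\Z_{\le 0}$, so $v^2\in\{0,-4,-8,\dots\}$. The equation $v^2-2Nk^2=-2$ will then force $v=0$ and $Nk^2=1$, which is solvable only for $N=1$ (with $k=\pm 1$).

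Combining these two steps gives both directions. For $N>1$, the complement contains no roots and $\tau$ specialises to a fixed-point-free involution on $X_N$, i.e.~an Enriques involution. For $N=1$, the generator $w$ itself is a $(-2)$-class in the complement, geometrically recognisable as the extra component of the $I_2$ fiber described in Proposition \ref{prop: sp X to XN}(2), so the specialised involution acquires fixed points and fails to be Enriques.

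The main obstacle I anticipate is actually rather mild: the serious work was already carried out in Proposition \ref{prop: sp X to XN}, where the orthogonal direct-sum structure of $\NS(X_N)$ was pinned down by comparing discriminants. Once that is in hand, the question reduces to asking whether $-2$ is represented by $E_8(-2)\oplus \langle -2N\rangle$, and divisibility in $E_8(-2)$ settles it immediately. The one subtle point to double-check is that no extra $(-2)$-class creeps in from a hypothetical glue vector in a nontrivial overlattice of $\NS(X)+\langle -2N\rangle$; but the discriminant matching in Proposition \ref{prop: sp X to XN} rules out any such overlattice from the start.
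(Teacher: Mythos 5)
Your proposal is correct and follows essentially the same route as the paper: both identify the orthogonal complement of $U(2)+E_8(-2)$ in $\NS(X_N)$ as $E_8(-2)+\langle -2N\rangle$ (with $E_8(-2)$ realised antidiagonally) and then check when this negative-definite lattice represents $-2$. Your explicit norm computation merely spells out the divisibility argument the paper leaves implicit.
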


\begin{proof}
We start with the primitive embedding of $U(2)+ E_8(-2)$  in $\NS(X)$ given by the Enriques involution $\tau$ on $\mX$.
Clearly this induces a primitive embedding
\[
U(2)+ E_8(-2)\hookrightarrow\NS(X_N)\simeq \NS(X)+ \langle -2N\rangle.
\]
The orthogonal complement of $U(2)+ E_8(-2)$ in $\NS(X_N)$ is thus isometric to 
\begin{eqnarray}
\label{eq:orto}
(U(2)+ E_8(-2))^{\perp_{\NS(X)}}+ \langle -2N\rangle\simeq E_8(-2)+ \langle -2N\rangle.
\end{eqnarray}
Note that  the orthogonal complement of $U(2)+ E_8(-2)$ in $\NS(X_N)$ is negative-definite.
It contains no classes with self--intersection $-2$ if and only if $N>1$.
Hence it is exactly the latter case where $\tau$ specialises to an Enriques involution on $X_N$. 
\end{proof}

\subsection{Abstract proof of Theorem \ref{thm}}

Thanks to the specific form of our K3 surfaces and the Enriques involution,
we can determine explicitly how the Brauer group pulls back from the  Enriques quotient.
This enables us to prove Theorem \ref{thm}. 

Recall the setup with
$N>1$
and $X_N$ a general member of the K3 family $\mX_N$.
Let $\tau$ denote the  Enriques involution induced from $\mX$
and $Z_N=X_N/\tau$.

\label{prop:br}

%\begin{proof}
We have computed the orthogonal complement of $U(2)+E_8(-2)$ in $\NS(X_N)$ in \eqref{eq:orto}.
Clearly this gives exactly those divisors which are anti-invariant for $\tau^*$.
The lattice in \eqref{eq:orto} represents only four-divisible integers if and only if $N$ is even.
Thus we deduce from Theorem \ref{thm:Beau} that
\[
\tau^* \Br(Z_N) =
\begin{cases}
\{0\} & \text{ if $N$ is odd},\\
\Z/2\Z & \text{ if $N$ is even}.
\end{cases}
\]
This proves Theorem \ref{thm}. \qed

%\end{proof}

\begin{rem}
The same argument applies to the Barth--Peters family $\mX$ to show that for a general member the second alternative (injectivity of the Brauer group under pull-back) holds true.
\end{rem}

%Question: Can we prove the same for all Enriques involutions on $X_N$ -- and possibly that $X_1$ does not admit any Enriques involution at all?!

%Idea: unique embedding of $U(2)+E_8(-2)$ into $\NS$ up to $O(\NS)$?!

\section{Barth--Peters family: elliptic fibrations and moduli}\label{s: BP elliptic fibr and moduli}

In this section and in the next one 
we will describe geometric properties and elliptic fibrations of the families 
introduced in Section \ref{s:lat} in order to describe their moduli spaces 
and to exhibit a geometric proof  of Theorem \ref{thm}. 
In particular we will associate to the Barth--Peters family $\mX$
a family of Kummer surfaces and hence of abelian surfaces. 
Using the relations between these families one can easily describe the moduli 
and the Picard--Fuchs equation of the Barth--Peters family.
This answers a problem on Enriques Calabi-Yau threefolds 
originating from string theory.

\subsection{The elliptic fibration $[2III^*,2I_2]$ on $\mX$}\label{ss: BP elliptic 2III^* 2I2}
We choose another convenient model of the Barth-Peters family $\mX$ of K3 surfaces following \cite{HS}, \cite{MN}.
It is defined as jacobian elliptic fibration through a family of quadratic base changes of $\PP^1$
\[
f: t\mapsto \frac{(t-a)(t-b)}t,\;\;\; ab\neq 0
\]
over the unique rational elliptic surface $R$ with singular fibers $III^*, I_2, I_1$ and $\MW=\Z/2\Z$.
One finds the  model
\begin{eqnarray}
\label{eq:BP}
\mX:\;\;\; y^2 = x(x^2 + t^2 x + t^3(t-a)(t-b))
\end{eqnarray}
generally with reducible fibers of type $III^*$ at $0, \infty$ and $I_2$ at $a,b$.
Here the two-torsion section is given by $(0,0)$.
Despite the symmetry in $a, b$,
it is natural to study the family in the parameters $a,b$ (as opposed to $a+b, ab$),
since we want to parametrise K3 surfaces with $\rho=19$ over a given field (say over $\Q$),
i.e.~without Galois action on the two $I_2$ fibers.

\subsection{Enriques involution on $\mX$}\label{ss:cti}
%The family $\mX$ has been studied for the following remarkable property:
On $\mX$ we have several interesting involutions.
We will need the following:
\begin{itemize}
\item
the deck transformation corresponding to $f$;
\item
translation by the two-torsion section $(0,0)$;
\item
the hyperelliptic involution $-id$.% acting as inversion on the fibers.
\end{itemize}
As in \cite{HS},
the composition of the first two involutions defines an Enriques involution $\tau$ on a general member of the family $\mX$.
It was checked in \cite{HS} that this is exactly the involution induced by the decomposition \eqref{eq:NS-X} of $\NS(\mX)$ and the specified embedding of the Enriques lattice.
Denote the quotient family by $\mY=\mX/\tau$.
Then the hyperelliptic involution $-id$ induces an involution on $\mY$
which acts trivially on $H^2(\mY,\Z)$.
Such a  cohomologically trivial involution is  remarkable since it cannot occur on a K3 surface by the Torelli theorem.
In fact, complex Enriques surfaces with cohomologically trivial involution have been classified by Mukai and Namikawa \cite{MN} (later corrected by Mukai \cite{Mukai}):

\begin{Theorem}
Let $Y$ be a complex Enriques surface with a cohomologically trivial involution.
Then $Y\in\mY$.
\end{Theorem}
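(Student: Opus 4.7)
The plan is to lift everything to the universal K3 cover. Let $\pi\colon X\to Y$ be the universal cover with covering involution $\tau$, and let $\sigma$ be a cohomologically trivial involution on $Y$. Since $\sigma_*$ must act trivially on $\pi_1(Y)=\Z/2\Z$, it lifts to an involution of $X$ commuting with $\tau$; there are exactly two such lifts, differing by $\tau$, and because $\tau^*$ acts as $-1$ on $H^{2,0}(X)$ precisely one of the lifts is non-symplectic. I would fix $\tilde\sigma$ to be the non-symplectic lift.

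I would then translate cohomological triviality into lattice data. Decompose
\[
H^2(X,\Z)\supset \Lambda^+\oplus \Lambda^-
\]
into the $\pm 1$-eigenlattices of $\tau^*$, with $\Lambda^+=\pi^*\Num(Y)\simeq U(2)+E_8(-2)$ and $\Lambda^-$ of rank $12$ containing $H^{2,0}(X)$. The cohomological triviality of $\sigma$ is exactly the condition that $\tilde\sigma^*$ acts as the identity on $\Lambda^+$; hence the $\tilde\sigma^*$-fixed sublattice $S^+$ contains $\Lambda^+$. Since $\tilde\sigma$ is non-symplectic, $H^{2,0}\subset S^-\otimes\C$ and so $S^+\subset \NS(X)$. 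Moreover, if $S^+=\Lambda^+$ then $\tilde\sigma^*=\tau^*$ on $H^2(X,\Z)$ and hence $\tilde\sigma=\tau$ by the Torelli theorem, which would make $\sigma$ trivial; so $S^+\supsetneq\Lambda^+$.

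The heart of the argument is to invoke Nikulin's classification of non-symplectic involutions on K3 surfaces: $S^+$ is a 2-elementary hyperbolic lattice, and its $(r,a,\delta)$-invariants are tightly restricted by $\Lambda^+\subseteq S^+$ together with the requirement that $\tau^*$ preserve the splitting $H^2(X,\Z)=S^+\oplus S^-$. A finite case analysis of the simultaneous eigenspace decomposition under $(\tau^*,\tilde\sigma^*)$ should exclude every possibility apart from the one in which $\Lambda^-\cap S^-$ contains a sublattice isometric to $E_8(-2)$. Combined with $\Lambda^+$ this yields a primitive embedding
\[
U(2)+E_8(-2)+E_8(-2)\hookrightarrow \NS(X),
\]
and Proposition~\ref{prop: uniquely of a lattice in primitve lattice, condition on discriminant} together with Theorem~\ref{theorem: Nikulin 1.14.4} identify the saturation of this sublattice in $\Lambda_{K3}$ with $U(2)+2E_8(-1)$. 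Hence $\NS(X)$ contains the Barth--Peters polarisation, so $X\in \mX$ and $Y=X/\tau\in \mY$.

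The principal obstacle is the case analysis invoked above: Nikulin's tables give a finite list of 2-elementary hyperbolic $S^+$ containing $U(2)+E_8(-2)$, and each possibility must either be ruled out or shown to force the additional $E_8(-2)$ summand inside $\Lambda^-$. A secondary difficulty is to verify that under this identification the descended involution $\sigma$ really matches the one on $\mY$ induced from the hyperelliptic involution $-\mathrm{id}$ of the $[2III^*,2I_2]$-fibration in \ref{ss: BP elliptic 2III^* 2I2}, rather than some a priori different cohomologically trivial involution; this last point should follow from uniqueness of the non-symplectic $\tilde\sigma^*$-action on $S^-$ up to conjugation in $O(\Lambda_{K3})$ combined again with the Torelli theorem.
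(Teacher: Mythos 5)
This theorem is not proved in the paper at all: it is the Mukai--Namikawa classification of complex Enriques surfaces with cohomologically trivial involutions (\cite{MN}, with the later correction in \cite{Mukai}), quoted as a known result. So you are not reconstructing an argument the authors give; you are sketching a strategy for a theorem whose proof occupies a separate research paper. Your outline of the reduction (lift $\sigma$ to the K3 cover, translate cohomological triviality into the condition $\Lambda^+\subseteq S^+$, rule out $S^+=\Lambda^+$ via Torelli) is a reasonable opening, but the step you label ``the principal obstacle'' --- the finite case analysis over Nikulin's invariants of the simultaneous $(\tau^*,\tilde\sigma^*)$-eigenlattice decomposition --- is the entire content of the theorem, and you have not carried it out. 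It is not a routine verification: one must also handle the possibility that a lift of $\sigma$ has order four rather than two (the two lifts $\tilde\sigma$ and $\tau\tilde\sigma$ satisfy $\tilde\sigma^2\in\{\mathrm{id},\tau\}$, and both have the same square), and the delicacy of exactly this kind of analysis is why the original classification had to be corrected decades later.

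There is also a concrete error in the lattice bookkeeping. You place the additional $E_8(-2)$ in $\Lambda^-\cap S^-$ and then claim it combines with $\Lambda^+$ to give $U(2)+2E_8(-2)\hookrightarrow\NS(X)$. But $S^-$ (the anti-invariant lattice of the \emph{non-symplectic} lift) contains $T(X)$, and in the very family the theorem describes one has $S^+=\NS(X)$ and $S^-=T(X)\cong U+U(2)$, so that $\Lambda^-\cap S^-=U+U(2)$ contains no $E_8(-2)$ and indeed no algebraic classes at all. The extra $E_8(-2)$ you need sits in $\Lambda^-\cap S^+$: it is anti-invariant for the Enriques involution $\tau^*$ but \emph{fixed} by the non-symplectic lift, which is precisely why it is forced to be algebraic and why, together with $\Lambda^+$, its saturation gives the Barth--Peters polarisation $U(2)+2E_8(-1)$. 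As written, the case your analysis is supposed to single out is the wrong one, and the passage to $\NS(X)$ would fail. Your closing concern about matching $\sigma$ with the involution induced by $-\mathrm{id}$ is legitimate but secondary; the missing classification and the misplaced $E_8(-2)$ are the substantive problems.
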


\subsection{Relation with Kummer surfaces}
\label{ss:222}
The Barth-Peters family admits a Shioda-Inose structure (cf. Section \ref{s:Km}), but it will be even more convenient for our purposes to pursue a different approach leading to Kummer surfaces.
Namely  we will study the family $\mX$ by applying a suitable symplectic involution 
such that the quotient family consists of Kummer surfaces of product type.

In order to relate the family $\mX$ directly to some Kummer surfaces,
we consider an alternative elliptic fibration.
We proceed by identifying suitable divisors of Kodaira type (cf.~\ref{ss:ell}).
Presently we extract two singular fibers of type $I_4^*$ from the curves visible in the  elliptic fibration \eqref{eq:BP}.
One divisor of Kodaira type $I_4^*$ is supported on the $III^*$ fiber at $0$ extended by zero section and identity components of $III^*$ at $\infty$ and one $I_2$ fiber (say at $t=a$).
The perpendicular curves (components of the $III^*$ at $t=\infty$ plus two-torsion section, far simple component of $III^*$ at $t=0$ and of $I_2$ at $t=b$) form another divisor of type $I_4^*$.
This leaves two double components of the $III^*$ fibers that serve as sections of the new fibration (zero and two-torsion).
All these $(-2)$-curves are sketched in the following figure:

\begin{figure}[ht!]
\setlength{\unitlength}{.35in}
\begin{picture}(10,4.2)(0,0)

\thinlines

\multiput(0,0)(1,0){5}{\circle*{.1}}
\multiput(0,4)(1,0){5}{\circle*{.1}}
\multiput(0,1)(0,1){3}{\circle*{.1}}
\multiput(4,1)(0,1){3}{\circle*{.1}}
\multiput(2,1)(0,2){2}{\circle*{.1}}
\multiput(1,2)(2,0){2}{\circle*{.1}}

\put(0,0){\line(1,0){4}}
\put(0,4){\line(1,0){4}}
\put(0,0){\line(0,1){4}}
\put(4,0){\line(0,1){4}}

\multiput(0,2)(3,0){2}{\line(1,0){1}}
\multiput(2,0)(0,3){2}{\line(0,1){1}}

%\put(-.55,-.4){$(y=-1)$}
%\put(3.45,-.4){$(x=-1)$}

%\put(-.55,4.25){$(x=1)$}
%\put(3.45,4.25){$(y=1)$}

\put(-.25,3.5){\line(1,-1){3.75}}
\put(-.25,3.5){\line(0,-1){3.75}}
\put(-.25,-.25){\line(1,0){3.75}}

\put(.5,4.25){\line(1,-1){3.75}}
\put(.5,4.25){\line(1,0){3.75}}
\put(4.25,4.25){\line(0,-1){3.75}}

\put(0,4){\circle{.2}}
\put(4,0){\circle{.2}}

\put(4.8,1.9){$\leftrightsquigarrow$}

\multiput(6,0)(1,0){5}{\circle*{.1}}
\multiput(6,4)(1,0){5}{\circle*{.1}}
\multiput(6,1)(0,1){3}{\circle*{.1}}
\multiput(10,1)(0,1){3}{\circle*{.1}}
\multiput(8,1)(0,2){2}{\circle*{.1}}
\multiput(7,2)(2,0){2}{\circle*{.1}}

\put(6,0){\line(1,0){4}}
\put(6,4){\line(1,0){4}}
\put(6,0){\line(0,1){4}}
\put(10,0){\line(0,1){4}}

\multiput(6,2)(3,0){2}{\line(1,0){1}}
\multiput(8,0)(0,3){2}{\line(0,1){1}}

\put(5.75,-.25){\framebox(1.5,4.5){}}
%\put(-.1,1.8){$D_8$}

\put(8.75,-.25){\framebox(1.5,4.5){}}
%\put(12.6,1.8){$II^*$}

\put(8,4){\circle{.2}}
\put(8,0){\circle{.2}}

\end{picture}
\caption{Divisors of type $I_4^*$ vs.~$III^*$'s and $A_1$'s}
\label{Fig:BP-basic}
\end{figure}
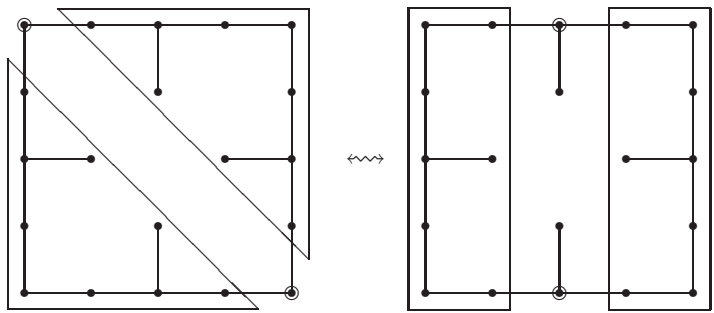

Explicitly, this elliptic fibration is extracted by the parameter $u=x/(t^2(t-a))$ in \eqref{eq:BP}.
We obtain the Weierstrass form (in $t,y$)
\begin{eqnarray}
\label{eq:4*}
\mX:\;\;\; y^2 = t (t^2+u(1+u-au^2)t-b u^4).
\end{eqnarray}
Here translation by the two-torsion section $(0,0)$ defines a Nikulin involution.
After desingularisation, the quotient results in a family $\mX'$ of K3 surfaces with two singular fibers of type $I_2^*$ and four fibers of type $I_2$:
\begin{eqnarray}
\label{eq:X'}
\mX':\;\;\; y^2 = t (t^2-2u(1+u-au^2)t+u^2((1+u-au^2)^2+4b u^2)).
\end{eqnarray}
These elliptic surfaces have generally $\MW=(\Z/2\Z)^2$ over $k(\sqrt{-b})$ (given explicitly below).

\subsection{Kummer structure}
\label{ss:Km-str}

By the classification of Oguiso \cite{Oguiso}, a general Kummer surface of product type $\Km(E\times E')$
admits an elliptic fibration with singular fibers and  $\MW$ as above.
Thus we compare two-dimensional families of K3 surfaces:
$\mX'$ and Kummer surfaces of product type.
But here it follows from the discriminant form by Proposition \ref{prop: uniquely of a lattice in primitve lattice, condition on discriminant} (or from Oguiso's classification) that 
\[
\NS = U+2D_4(-1)+E_8(-1).
\]
As this lattice admits a unique embedding into the K3 lattice up to isometries (cf. Theorem \ref{theorem: Nikulin 1.14.4}),
K3 surfaces with this N\'eron-Severi lattice form a unique two-dimensional family.
In particular, the family $\mX'$ and the Kummer family of product type coincide.
We proceed by working out the relation in detail.

Given $\mX'$ over $k(a,b)$, there are elliptic curves $E, E'$ such that $\mX'\cong \Km(E\times E')$.
In order to find the  elliptic curves,
we exhibit an alternative elliptic fibration on $\mX'$ with two fibers of type $IV^*$.
This will allow us to obtain information about the j-invariants of the elliptic curves from
the coefficients of the Weierstrass form by \cite{Inose} (cf.~\cite{Sandwich}).

We identify two disjoint divisors of Kodaira type $IV^*$ in the model \eqref{eq:X'} as depicted in Figure \ref{Fig:BP-2}:
on the one hand, the first five components of an $I_2^*$ fiber (say at $\infty$) extended by zero section $O$ and the two-torsion section $R=(0,0)$ (which is distinguished by the fact that it meets all reducible fibers at non-identity components);
on the other hand, the last five components of the other $I_2^*$ fiber extended by the other two-torsion sections.
These divisors induce an elliptic fibration on $\mX'$ with two singular fibers of type $IV^*$.
Here we have plenty of sections for the new fibration given by the remaining original fiber components.

%MS figure

\begin{figure}[ht!]
\setlength{\unitlength}{.35in}
\begin{picture}(11,2)(0,-0.2)

\thinlines

\multiput(4,0)(2,0){2}{\circle*{.1}}
\multiput(9.5,0)(1,0){1}{\circle*{.1}}
\multiput(5,0)(5.5,0){2}{\circle{.1}}
\put(.5,0){\circle*{.1}}
\put(0,0){\line(1,0){.5}}
\put(4,0){\line(1,0){2}}
\put(9.5,0){\line(1,0){1.5}}

\multiput(4,1.5)(2,0){2}{\circle*{.1}}
\multiput(9.5,1.5)(1,0){1}{\circle*{.1}}
\multiput(5,1.5)(5.5,0){2}{\circle{.1}}
\put(.5,1.5){\circle*{.1}}
\put(0,1.5){\line(1,0){.5}}
\put(4,1.5){\line(1,0){2}}
\put(9.5,1.5){\line(1,0){1.5}}

\multiput(1.25,.75)(1,0){3}{\circle*{.1}}
\multiput(6.75,.75)(1,0){3}{\circle*{.1}}
\multiput(.5,0)(5.5,0){2}{\line(1,1){.75}}
\multiput(.5,1.5)(5.5,0){2}{\line(1,-1){.75}}
\multiput(4,0)(5.5,0){2}{\line(-1,1){.75}}
\multiput(4,1.5)(5.5,0){2}{\line(-1,-1){.75}}
\multiput(1.25,.75)(5.5,0){2}{\line(1,0){2}}

\put(4.55,1.08){$O$}
\put(4.55,.15){$R$}

\multiput(.5,0)(0,1.5){2}{\circle{.2}}
\multiput(6,0)(0,1.5){2}{\circle{.2}}

\put(1,-.25){\framebox(4.25,2){}}
\put(1.1,1.3){$IV^*$}

\put(6.5,-.25){\framebox(4.25,2){}}
\put(6.6,1.3){$IV^*$}

\end{picture}
\caption{Divisors of type $IV^*$ vs.~$I_2^*$'s and two-torsion sections}
\label{Fig:BP-2}
\end{figure}
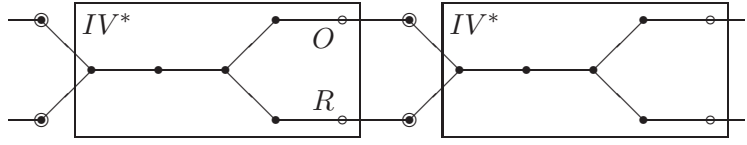

To write down the fibration explicitly, it is convenient to translate $x$ so that one of the other two-torsion sections becomes $(0,0)$.
For this purpose, we write $b=-c^2$.
Then the conjugate two-torsion sections have $t$-coordinate
$u+u^2\pm 2 c u^2-a u^3$.
The translation $t\mapsto t+u+u^2+ 2 c u^2-a u^3$ gives the Weierstrass form
\[
\mX':\;\;\; y^2 = t (t+4 c u^2) (t+u+u^2+2 c u^2-a u^3).
\]
The last factor of the RHS encodes the distinguished two-torsion section $R$.
The above divisors of Kodaira type $IV^*$ are extracted at $v=0,\infty$ by the affine parameter
\[
v = \dfrac{y}{t+u+u^2+2 c u^2-a u^3}.
\]
Indeed, solving for $y$, we obtain the following family of cubics in $\A^2$ with coordinates $t,u$
and parameter $v$:
\[
\mX':\;\;\; (t+u+u^2+2 c u^2-a u^3) v^2 = t (t+4 c u^2).
\]
This model makes visible the quadratic base change from the rational elliptic surface that is given
by the cubic pencil with $w=v^2$.
Standard formulas give the following Weierstrass form in the usual coordinates $x,y$ with elliptic parameter $v$ and moduli $a, b$ recovered from $b=-c^2$:
\[
\mX':\;\;\; y^2 = x^3
-\frac{16}3 v^4 (1-12 b+3 a) x
+\frac{16}{27} v^4 (8 v^2+288 v^2 b+36 a v^2-432 b+27 a^2 v^4).
\]

\subsection{Elliptic curves}
\label{ss:ec}

Recall that the two families coincide,
i.e.~there are elliptic curves $E, E'$ such that $\mX'=\Km(E\times E')$.
Here we can compute the j-invariants $j, j'$ as follows.
The variable change
\[
v \mapsto 
2(-b)^{1/4}v/\sqrt{a}, 
x \mapsto 16 (b/a)^{2/3} x
\]
leads to the standard normal form
\[
y^2 = x^3 - 3Av^4x - v^4(v^4+2Bv^2+1)
\]
where $A, B$ are algebraic expressions in $a,b$.
By the work of Inose \cite{Inose} (cf.~\cite{Sandwich}), $jj'=12^6A^3$ and $(j-12^3)(j'-12^3)=12^6B^2$
(so that $A$ and $B$ are products of Weber functions).
In the present situation, one obtains
\begin{eqnarray}
\label{eq:*}
jj' = -4096 (3 a-12 b+1)^3/(a^2 b)
\end{eqnarray}
and
\begin{eqnarray}
\label{eq:+}
(j-12^3)(j'-12^3)=-1024 (9 a+72 b+2)^2/(a^2 b).
\end{eqnarray}
Thus we can express the elliptic curves $E, E'$ in terms of the moduli $a,b$.
Note that we lost the symmetry in $a,b$
when extracting the two $I_4^*$ fibers on $\mX$.
Algebraically in the above formulas, this can be accounted for as follows:
if $j$ gives a solution of the system \eqref{eq:*}, \eqref{eq:+} for the ordered pair $(a,b)$,
then $mj+l$ encodes a solution of the system for the ordered pair $(b,a)$ with
\begin{eqnarray*}
m & = & \dfrac{a (64 a^2+16 a+16 b a+b^2)}{b(64 b^2+16 b+16 b a+a^2)},\\
l & = & \dfrac{8 (156 b^2 a-4 b^3+16 a+128 a^2+80 b a-b^2+256 a^3-192 b a^2)}{b^2 a}\\
&& \;\;\; + m\dfrac{8 (156 b a^2-4 a^3+16 b+128 b^2+80 b a-a^2+256 b^3-192 b^2 a)}{a^2 b}.
\end{eqnarray*}

%Question: Is there some isogeny that relates $j$ or $j'$ with $mj+l$?
%Checked: nothing of degree 1, 2, 3, 4, 6...

\subsection{Conclusion}\label{conclusion}
{\sl
The Hodge structure on $\mX$ is given by the pair $(E, E')$ as above.}

For instance, if $E, E'$ are isogenous, but without CM, then
any $X$ as above will have Picard number $\rho(X)=19$.
Note, however, that it is not clear from these computations how we can choose $a, b$ so that 
$X$ attains a chosen transcendental lattice of rank two or three.
This problem will be overcome for the cases related to Theorem \ref{thm} 
by geometric means in Section \ref{s:geom}.

\subsection{Relations with Physics and Picard--Fuchs equations}

An Enriques Calabi--Yau threefold is the smooth quotient 
$\left(S\times E\right)/\left(\tau\times \iota_E\right)$, 
where $S$ is a K3 surface admitting an Enriques involution $\tau$, 
$E$ is an elliptic curve and $\iota_E$ is the hyperelliptic involution on $E$. 
These particular threefolds are intensively studied in the context of mirror symmetry and string theory, 
also because they are their own mirror. 
In a certain sense this property depends on the corresponding property for the K3 surfaces: 
the family of K3 surfaces admitting an Enriques involution is its own mirror 
within the framework of mirror symmetry of polarised K3 surfaces.

It is immediate to check with the K\"unneth formula
that $h^{2,1}((S\times E)/(\tau\times \iota_E))=11$.
Hence the family of Enriques Calabi--Yau threefolds is  $11$-dimensional. 
Note that the dimension of the family of the Enriques Calabi--Yau threefolds 
is the sum of the dimensions of the family of the K3 surfaces 
and of the elliptic curve involved in the construction.
Thus all the deformations of the threefolds are induced 
by deformations of the K3 surface and of the elliptic curve. 

To gain specific insight into Enriques Calabi-Yau threefolds,
recently certain subfamilies have drawn considerable attention.
In particular, in \cite{KM} the Barth-Peters family $\mX$
has been studied from this view point.

In order to describe the mirror map for the resulting families of Calabi-Yau threefolds, 
the Picard-Fuchs equation of the family of K3 surfaces $\mX$ is computed in \cite[(6.26)]{KM}. 
Since the Barth-Peters family is a 2-dimensional family, 
one expects that the Picard-Fuchs equation is a partial differential equation of order 4.
However, in this particular case the Picard-Fuchs equation 
splits into a system of two partial differential equations of order 2
which  can be solved separately. 

Our construction provides a geometric interpretation of this result through Kummer structures. 
Indeed we proved (see Section \ref{conclusion}) that the variation of the Hodge structures of $\mX$ depends only on the variation of the Hodge structures of two non-isogenous elliptic curves (and the Picard--Fuchs equation of a family of elliptic curves is a second order differential equation).
In addition, one obtains the same Picard--Fuchs equations for several other families of K3 surfaces
that are related by rational dominant maps 
between the generic members (such as $\mY$ in Section \ref{s:Km}).
 Naturally this property carries over to subfamilies.
 Along these lines, one can find the Picard--Fuchs equations for the families related
 to Theorem \ref{thm} and \ref{thm2}
 (see \ref{ss:X_N}, \ref{ss:Km})
 through the results from \cite{CDLW}.
% Indeed the calculations carried out in \cite{CDLW}
% relate directly to $N$-isogenous elliptic curves. 

\section{Geometric construction}
\label{s:geom}

This section may be considered the geometric heart of this paper 
as we construct explicitly the K3 surfaces in Theorem \ref{thm}: 
here we exhibit in a purely geometric way the surfaces $X_N$ 
that were introduced in Section \ref{s:lat} from the point of view of the lattices.

\subsection{Outline}

Given $N$-isogenous elliptic curves $E, E'$,
we consider a particular elliptic fibration on the Kummer surface $\Km(E\times E')$.
If $N>1$, then the graph of the isogeny induces an additional section.
This section can be traced through two related elliptic fibrations 
until we reach the fibration from \ref{ss:222}.
Then the quotient by a two-torsion section takes us to a member $W_N$ of the Barth--Peters family $\mX$.
In fact, there are two ways to go through this whole procedure.
In each case, we compute the transcendental lattice $T(W_N)$,
and one case leads to Theorem \ref{thm} (cf.~\ref{ss:2nd}).

\subsection{Abelian surface}

Let $E, E'$ denote complex elliptic curves without CM.
Assume that they are (cyclically) $N$-isogenous.
Then the abelian surface $A=E\times E'$ has transcendental lattice $T(A)=U+\langle 2N\rangle$.

\subsection{Kummer surface}
\label{ss:Km}

It follows that the Kummer surface $\Km=\Km(E\times E')$
has transcendental lattice $T(\Km)=U(2)+\langle 4N\rangle$.
We consider three specific elliptic fibrations 
that also live on general Kummer surfaces of product type,
i.e.~Kummer surfaces of a product of non-isogenous elliptic curves 
(as classified by Oguiso \cite{Oguiso}).
We write the fibrations in terms of the reducible fibers and torsion in $\MW$ in the non-degenerate case $N>1$ :
\begin{enumerate}
\item
$[II^*, 2I_0^*]$, $\MW_\text{tor}=\{0\}$;
\item 
$[III^*, I_2^*, 3I_2]$, $\MW_\text{tor}=\Z/2\Z$;
\item
$[2I_2^*, 4I_2]$, $\MW_\text{tor}=(\Z/2\Z)^2$.
\end{enumerate}
If $E, E'$ were not isogenous, then the above fibrations would have $\MW$-rank zero with $\NS$ fully generated by the given singular fibers and sections.
The generic fibration of the third kind has already appeared in \ref{ss:222}.
It is also instructive to note that while the first fibration is unique on $\Km$ up to $\Aut(\Km)$,
the second and third are generally not.
In fact, by \cite{Oguiso} there are generally six resp.~nine inequivalent such fibrations.
This property will only enter implicitly in our construction (cf.~Remark \ref{rem:field} and \ref{ss:2nd}).

\subsection{First elliptic fibration}

This fibration has played a central role in the study of singular K3 surfaces (cf.~\cite{Sandwich}, \cite{SI}).
Like the fibration with two $IV^*$ fibers in \ref{ss:ec}, 
the coefficients of the Weierstrass form admit a simple algebraic expression in the j-invariants of the elliptic curves.
As a further advantage, it is easy to determine the abstract shape of a section induced by an isogeny of the elliptic curves.

The $N$-isogeny between $E$ and $E'$ induces an additional divisor on the above fibration.
If $N=1$, then this is a fiber component, as one of the $I_0^*$ fibers degenerates to type $I_1^*$.
In the following, we only consider the case $N>1$.
Then the additional divisor can be represented by a section $P$ on the above elliptic fibration.
We employ the theory of Mordell-Weil lattices \cite{ShMW} to find information about the section $P$.

\begin{Lemma}
The section $P$ meets either one or both fibers of type $I_0^*$ at a non-identity component depending on the parity of $N$ being odd or even.
\end{Lemma}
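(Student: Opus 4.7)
The plan is to apply Shioda's height formula: for a section $Q$ on a jacobian elliptic K3 surface with zero section $O$,
\[
h(Q) = 4 + 2(Q\cdot O) - \sum_v \mathrm{contr}_v(Q),
\]
where the correction terms depend on which component of each reducible fiber $Q$ meets. On our fibration only the two $I_0^*$ fibers contribute: the $II^*$ fiber (type $E_8$) is unimodular, and the $I_1$ fibers (rounding out the Euler characteristic) are irreducible. For each $I_0^*$ (type $D_4$) the correction is $0$ at the identity component and $1$ at any of the three non-identity simple components. Write $c_1, c_2 \in \{0,1\}$ for these corrections at $Q = P$.

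Next I would pin down $h(P)$ via a discriminant computation. The transcendental lattice $T(\Km) = U(2) + \langle 4N\rangle$ has $|\disc| = 16N$, so $|\disc \NS(\Km)| = 16N$; the trivial lattice $\Triv = U + E_8 + D_4 + D_4$ has $|\disc| = 16$; and $\MW_{\mathrm{tor}}$ is trivial for $N>1$. Shioda's determinant identity then yields $|\disc \MWL| = N$, and since $\MW$ has rank one its generator has height $N$. Since $P$ is induced by the primitive class $\Gamma_\phi \in \NS(E\times E')$ (witnessed for instance by $\Gamma_\phi \cdot (\{0\}\times E') = 1$), this primitivity transfers to $P$, so $P$ generates $\MW$ and $h(P) = N$.

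Substituting into the height formula, $c_1 + c_2 = 4 + 2(P\cdot O) - N$, and hence
\[
c_1 + c_2 \equiv N \pmod{2}.
\]
For $N$ odd this immediately forces $c_1 + c_2 = 1$: $P$ meets exactly one $I_0^*$ at a non-identity component, closing the odd case.

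The main obstacle is the even case, where the parity argument only yields $(c_1, c_2) \in \{(0,0),(1,1)\}$, so one must rule out $(0,0)$. Here I would exploit $2$-torsion: when $N$ is even, $\phi$ factors through a $2$-isogeny, so $\ker\phi$ contains a non-zero $2$-torsion point $t\in E$, and the translate $\Gamma_\phi + (t,0)$ differs from $\Gamma_\phi$ by a $2$-torsion shift. On the Kummer side---where the two $I_0^*$ fibers correspond geometrically to the two elliptic factors, with non-identity simple components parametrising $2$-torsion cosets---this shift forces $P$ to meet a non-identity component of each $I_0^*$. Equivalently, the component met by $P$ can be read off directly from the Inose--Shioda normal form of the Weierstrass equation (compare \ref{ss:ec}), giving an explicit verification.
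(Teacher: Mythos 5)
Your setup and your treatment of the odd case are correct and coincide with the paper's argument: on the $[II^*,2I_0^*]$ fibration only the two $D_4$'s contribute correction terms $c_1,c_2\in\{0,1\}$, the discriminant comparison gives $h(P)=N$, and the height formula yields $c_1+c_2=4+2(P\cdot O)-N$, whence $c_1+c_2\equiv N\pmod 2$ and, for odd $N$, necessarily $c_1+c_2=1$. Your extra care in checking that $P$ actually generates the rank-one Mordell--Weil group (via primitivity of $\Gamma_\phi$) is a point the paper passes over more quickly.

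The even case, however, contains a genuine gap. You correctly isolate the issue --- ruling out $(c_1,c_2)=(0,0)$ --- but the argument offered is not a proof. The claim that for this particular fibration ``the two $I_0^*$ fibers correspond geometrically to the two elliptic factors, with non-identity simple components parametrising $2$-torsion cosets'' is a specific geometric assertion that is neither standard nor justified here; and even granting it, the step from ``$\Gamma_\phi$ is stable under translation by $(t,0)$'' to ``$P$ meets a non-identity component of each $I_0^*$'' is not made: which component a section meets is determined by its image in the component groups of the fibers, and nothing in your sketch computes that image. The fallback appeal to ``an explicit verification'' from the normal form is likewise not carried out. The paper instead disposes of $(0,0)$ by a short lattice argument requiring no parity hypothesis: if $P$ met both identity components, it would be orthogonal to both copies of $D_4(-1)$ in the trivial lattice, so $\NS(\Km)$ would split off $2D_4(-1)$ orthogonally and its discriminant group would contain $(\Z/2\Z)^4$, needing at least four generators; but the discriminant group of $\NS(\Km)$ is isomorphic to that of $T(\Km)$, which needs at most $\operatorname{rank}T(\Km)=3$ generators --- a contradiction. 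Hence $c_1+c_2\ge 1$ in all cases, and the parity congruence then settles both parities at once. You should replace your even-case argument with this (or an equally rigorous) step.
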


\begin{proof}
Recall the trivial lattice $U+2D_4(-1)+E_8(-1)$ generated by zero section and fiber components.
Since the trivial lattice has discriminant $-16$ while $\Km$ has discriminant $16N$, the section $P$ ought to have height $N$.
We will use that $P$ cannot meet both $I_0^*$ fibers at their identity components.
Otherwise, it would be orthogonal to the two copies of $D_4(-1)$ from the trivial lattice inside $\NS$,
and thus the $2$-length of $\NS$ would be at least four, exceeding the rank of $T(\Km)$ which is three.
Hence the height of $P$ is
\[
h(P) = 4 + 2 (P.O) - \begin{cases}
1 &  \text{ if $P$ meets one } D_4,\\
2 &  \text{ if $P$ meets both } D_4\text{'s}.
\end{cases}
\]
Since $h(P)=N$, the intersection behaviour is predicted by the parity of $N$ as claimed.
\end{proof}

From the lemma and the height of $P$, we also obtain the intersection number
\[
(P.O) = 
\begin{cases}
(N-3)/2 &  \text{ if $N$ is odd},\\
(N-2)/2 &  \text{ if $N$ is even}.
\end{cases}
\]

\subsection{Second elliptic fibration}
\label{ss:fib2}

From the first elliptic fibration, we will extract an elliptic fibration of the second kind as in \ref{ss:Km}.
From here on, we concentrate on the case where $N>1$ is odd.

We identify a divisor of Kodaira type $I_2^*$ on $\Km$ as follows:
Take the $I_0^*$ fiber met by $P$ in a non-identity component minus exactly that component and extend by zero section and identity components of the other two reducible fibers.
This induces an elliptic fibration on $\Km$ with the second components of $II^*$ and the other $I_0^*$ as sections (zero and two-torsion).
Perpendicular to the new $I_2^*$ fiber we find an $E_7$ coming from $II^*$ and three $A_1$'s coming from $I_0^*$.
Thus we obtain exactly the second elliptic fibration from \ref{ss:Km}.

In Figure \ref{Fig:fib2} we sketch these divisor classes.
Depicted the reader finds the zero section $O$ of the first fibration
and the components of the reducible fibers.
We also include the section $P$ of the first fibration
that intersects $O$ with multiplicity $(N-3)/2$.
For the second fibration,
we mark the components of the two ``big'' singular fibers by boxes
and the new sections $O', R'$ by circles.

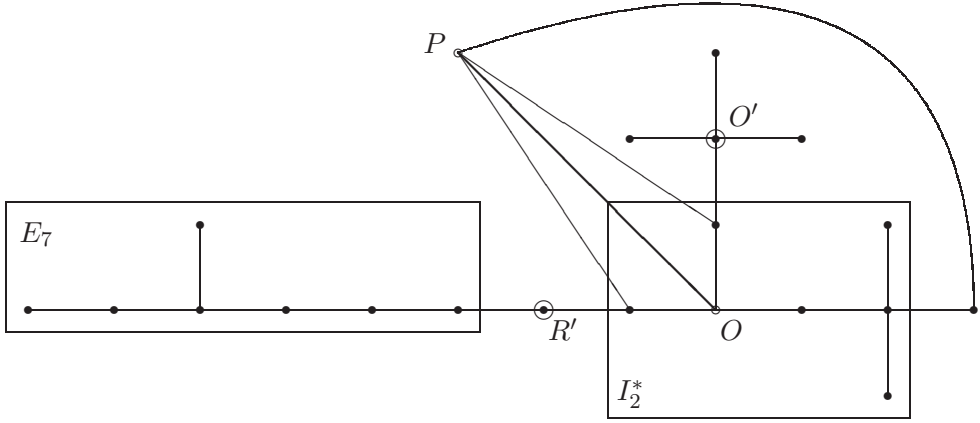
\begin{figure}[ht!]
\setlength{\unitlength}{.45in}
\begin{picture}(11,4.8)(0,-0.25)
\thinlines

\put(8,1){\circle{.1}}
\put(8.05,.65){$O$}

% O':
\put(6.05,.65){$R'$}

\multiput(0,1)(1,0){8}{\circle*{.1}}

\multiput(9,1)(1,0){3}{\circle*{.1}}

\put(0,1){\line(1,0){11}}

\multiput(7,3)(1,0){3}{\circle*{.1}}
\put(7,3){\line(1,0){2}}

\multiput(8,2)(0,2){2}{\circle*{.1}}
\put(8,1){\line(0,1){3}}

\multiput(10,0)(0,2){2}{\circle*{.1}}
\put(10,0){\line(0,1){2}}

\put(2,1){\line(0,1){1}}
\put(2,2){\circle*{.1}}

% new sections:
\put(6,1){\circle{.2}}
\put(8,3){\circle{.2}}
\put(8.15,3.15){$O'$}

\put(5,4){\circle{.1}}

% P
\put(4.6,4){$P$}
\qbezier(5,4)(11,6)(11,1)
\put(5,4){\line(2,-3){2}}
\put(5,4){\line(3,-2){3}}

\thicklines
\put(5,4){\line(1,-1){3}}

\thinlines

\put(-0.25,0.75){\framebox(5.5,1.5){}}
\put(-.1,1.8){$E_7$}

\put(6.75,-.25){\framebox(3.5,2.5){}}
\put(6.85,-.05){$I_2^*$}

%\put(7.5,1.5){\framebox(3.25,3){}}

\end{picture}
\caption{Divisors of type $I_2^*$ and $E_7$}
\label{Fig:fib2}
\end{figure}

On the second fibration, $P$ induces a multisection of degree $N-1$.
In order to find the section $P'$ associated to this multisection,
we subtract suitable elements from the trivial lattice (fiber components and the zero section).
%By the Shioda--Tate formula, there is a section $P'$ %for the second fibration 
%associated to this multisection (uniquely up to addition by the two-torsion section).
%Clearly $P'$ will also have height $N$.
%It remains to compute the fiber components met by $P'$.
For this, we fix the zero section $O'$ and the two-torsion section $R'$ as depicted.
%%Then one can proceed as follows:
%
%\begin{enumerate}
%\item
%add fiber components to the multisection $P$ in order to obtain a divisor $D$  orthogonal to all fiber components that are not simple;
%\item
%add the intersection numbers with the simple components of a given singular fiber
%according to the group structure of the singular fiber.
%\end{enumerate}
%
In the present situation, we find the following intersection behaviour of $P'$ (which will also be sketched in Figure \ref{Fig:fib3}):
\begin{enumerate}
\item
$P$ meets only the identity component of $III^*$ (the one missing in the figure which thus also meets $O'$) with multiplicity $N-1$.
\item
On the $I_2$ fibers, $P$ meets the non-identity components (again missing in the figure) with multiplicity $N-1$.
Subtracting $(N-1)/2$ times the identity component $C_0^i$ ($i=1,2,3$),
we obtain a divisor meeting only the identity component with multiplicity $N-1$.
%Since this multiplicity is even, the group structure $\Z/2\Z$ of the fiber components implies that $P'$ meets the identity components again.
\item
On the $I_2^*$ fiber, $P$ meets the first double component with multiplicity $(N-3)/2$.
Subtracting the identity component $C_0^0$ with the same multiplicity, we obtain a divisor that meets only the identity component (multiplicity $N-2$) and the near simple component (multiplicity $1$).
%Hence $P'$ meets the near simple component.
\end{enumerate}
By adding suitable multiples of $O'$ and the general fiber $F'$,
we obtain a divisor $D'$ with $D'^2=-2$ that meets each fiber in exactly one point:
\[
D'=P - \frac{N-1}2(C_0^1+C_0^2+C_0^3) - \frac{N-3}2 C_0^0 - (N-2) O' + \frac{N-1}2 F'.
\]
Since $D'\equiv P$ modulo the trivial lattice,
$D'$ represents a section $P'$ only meeting the $I_2^*$ fiber in a non-identity component (near simple).
Since $P'.O'=(N-3)/2$, we find indeed that $P'$ has height $N$.

\subsection{Third elliptic fibration}

We continue with another elliptic fibration.
We extract a new divisor of Kodaira type $I_2^*$ along similar lines as in \ref{ss:fib2}.
Namely we combine the first two simple and double components of the old $I_2^*$ fiber with zero 
section and identity components of the $III^*$ fiber and one of the $I_2$ fibers.
In the orthogonal complement we find another $D_6$ (from $III^*$) and four $A_1$'s (the far simple components of the original $I_2^*$ and the non-identity components of the two avoided $I_2$'s).
Then the original two-torsion section $R'$ (which will be omitted in the next figure to simplify the presentation) 
and the two remaining components of the original fibers of type $I_2^*$ and $III^*$ serve as sections (zero and twice two-torsion).
Thus we find indeed the third fibration from \ref{ss:Km} as depicted in Figure \ref{Fig:fib3}.

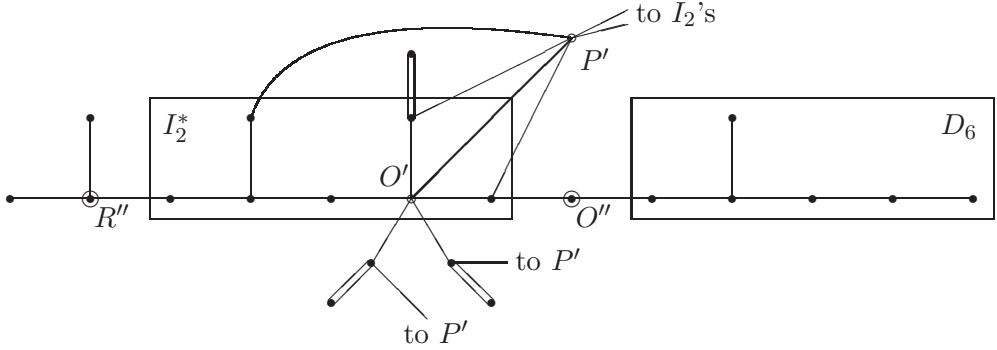
\begin{figure}[ht!]
\setlength{\unitlength}{.42in}
\begin{picture}(13,4.3)(-.15,-.7)
\thinlines

\put(5,1){\circle{.1}}
\put(4.6,1.15){$O'$}

% O':
\put(7.05,.65){$O''$}

% R
\put(1.05,.65){$R''$}

\multiput(0,1)(1,0){5}{\circle*{.1}}

\multiput(6,1)(1,0){7}{\circle*{.1}}

\put(0,1){\line(1,0){12}}

\multiput(1,2)(2,0){3}{\circle*{.1}}
\multiput(1,2)(2,0){3}{\line(0,-1){1}}

\put(9,2){\circle*{.1}}
\put(9,1){\line(0,1){1}}

\multiput(4.5,0.2)(1,0){2}{\circle*{.1}}
\multiput(4,-.3)(2,0){2}{\circle*{.1}}

\multiput(4.47,.23)(.06,-.06){2}{\line(-1,-1){.5}}
\multiput(5.47,.17)(.06,.06){2}{\line(1,-1){.5}}

\put(5,1){\line(3,-5){.5}}
\put(5,1){\line(-3,-5){.5}}

\put(5,2.8){\circle*{.1}}
\multiput(4.96,2.8)(.08,0){2}{\line(0,-1){.8}}

%\put(10,0){\line(0,1){2}}

%
%\put(2,1){\line(0,1){1}}
%\put(2,2){\circle*{.1}}

% new sections:
\put(1,1){\circle{.2}}
\put(7,1){\circle{.2}}

%P
\put(7,3){\circle{.1}}

% P
\put(7.1,2.6){$P'$}
\qbezier(7,3)(3.5,3.5)(3,2)
\put(7,3){\line(-2,-1){2}}
\put(7,3){\line(-1,-2){1}}

\thicklines
\put(7,3){\line(-1,-1){2}}

\thinlines

% other I_2's
%\qbezier(7,3)(21,1)(5.5,.2)
\put(7,3){\line(4,1){.7}}
\put(7,3){\line(2,1){.7}}
\put(7.8,3.2){to $I_2$'s}

\put(5.5,.2){\line(1,0){.7}}
\put(6.3,.1){to $P'$}
\put(4.5,.2){\line(1,-1){.7}}
\put(4.9,-.8){to $P'$}

\put(1.75,0.75){\framebox(4.5,1.5){}}
\put(1.9,1.8){$I_2^*$}

\put(7.75,.75){\framebox(4.5,1.5){}}
\put(11.6,1.8){$D_6$}

%\put(7.5,1.5){\framebox(3.25,3){}}

\end{picture}
\caption{Divisors of type $I_2^*$ and $D_6$}
\label{Fig:fib3}
\end{figure}

It remains to associate a section $P''$ to the multisection induced by $P'$.
Here the multisection degree is $N$.
We choose the zero section $O''$ as indicated in the figure and denote  by $R''$ the two-torsion section which is a component of the original $I_2^*$ fiber.
%The four $A_1$'s depicted do not meet $O''$,
%so they represent the non-identity components of the $I_2$ fibers.
Along the same lines as in \ref{ss:fib2},
one finds $P''$ meeting the $I_2$ fibers in identity components
and the $I_2^*$ fibers 
in the same far components  that $R''$ meets.
Since $P''.O''=(N-1)/2$, $P''$ has height $N$ as required.

%Hence $P'$ only meets the identity components of the $I_2$ fibers and a far component of the $I_2^*$ fiber originating from $III^*$ (each with odd multiplicity $N$), so the same holds for $P''$.
%For the other $I_2^*$ fiber, we can modify $P''$ as in \ref{ss:fib2} to find a divisor $D$ meeting identity component (multiplicity $N-2$) as well as near and one far simple component (each with multiplicity $1$ as depicted).
%Thus $P''$ has to meet the other far simple component, and $(P''.O'')=(N-1)/2$.
%In conclusion, $P''$ meets exactly the same far components of the two fibers of type $I_2^*$ that $R''$ meets.

\begin{rem}
\label{rem:field}
In this step we possibly have to extend the base field 
as we single out one of the three fibers of type $I_2$ 
(which correspond to the non-identity simple components of the original $I_0^*$ fibers 
where $P$ meets the identity component).
This extension is the reason why we cannot simply parametrise the family %in Section \ref{s:N=3}
by $X_0^+(N)$ (see \ref{ss:moduli} for the case $N=3$).
\end{rem}

%[identify zero and two-torsion sections!]

\subsection{$\Z/2\Z$ quotient}\label{ss: Z2 quotient}

We want to quotient out by the two-torsion section that meets both $I_2^*$ fibers in the near simple component
and hence all $I_2$ fibers in non-identity components.
In terms of the group law,  this section is $R'+R''$.
The quotient results in a new K3 surface $W_N$ with an elliptic fibration with only two reducible fibers, each of type $I_4^*$, and two-torsion in $\MW$. 
In a few steps we shall see that $W_N$ exactly realises a surface $X_N$ as in Theorem \ref{thm}.

\begin{Lemma}
$W_N$ is a quadratic base change of a rational elliptic surface.
\end{Lemma}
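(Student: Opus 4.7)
The plan is to show that $W_N$ belongs to the Barth--Peters family $\mX$ and that the $[2I_4^*]$-fibration on $W_N$ is itself a quadratic base change, from which one reads off a rational elliptic surface as the quotient.

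First I would verify that $W_N\in\mX$. The fibration on $W_N$ produced in \ref{ss: Z2 quotient} has singular fibres $[2I_4^*, 4I_1]$ together with a $2$-torsion section. Its trivial lattice $U + 2D_8(-1)$, enhanced by the $2$-torsion, has rank $18$ and a discriminant form matching that of $U(2)+2E_8(-1)$; by Proposition~\ref{prop: uniquely of a lattice in primitve lattice, condition on discriminant} these two lattices coincide up to isometry. Hence the $(U(2)+2E_8(-1))$-polarisation of $\mX$ is realised on $W_N$, placing $W_N$ in the Barth--Peters family. The additional section of infinite order coming from the $N$-isogeny between $E$ and $E'$ then places $W_N$ in the $1$-dimensional subfamily $\mX_N$; compare Proposition~\ref{prop: sp X to XN}.

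Having identified $W_N \in \mX$, I would exhibit the quadratic base change structure on the model~\eqref{eq:4*} of the $[2I_4^*]$-fibration of $\mX$ from~\ref{ss:222}. A direct computation shows that, after a suitable change of the base parameter $u$, the coefficients of the Weierstrass equation become polynomials in the square of the new parameter; substituting $u^2 = w$ then yields a rational elliptic surface $R$ over $\PP^1_w$ whose quadratic base change $w = u^2$ (ramified at two smooth fibres of $R$) recovers $\mX$, and a fortiori $W_N$. Alternatively, one can argue abstractly: the deck transformation of the base change \eqref{eq:BP} from \ref{ss: BP elliptic 2III^* 2I2} is intrinsic to the surface $\mX$ and restricts to an involution $\sigma$ of $W_N$; tracking $\sigma$ through the change of elliptic parameter from \eqref{eq:BP} to \eqref{eq:4*}, one verifies that $\sigma$ preserves the $[2I_4^*]$-fibration and acts on the base as a non-trivial involution, and quotienting gives the desired rational elliptic surface.

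The main obstacle is exactly this compatibility check: one must show that the deck involution of the $[2III^*,2I_2]$-fibration descends to a base-preserving involution of the $[2I_4^*]$-fibration on $W_N$. This can be done either by the explicit change of coordinates suggested above, or by appealing to the automorphism group of the K3 surface and the fact that both fibrations arise from the same lattice-theoretic decomposition of $\NS(\mX)$ up to the action of $\Aut(\mX)$.
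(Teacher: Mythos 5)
Your route inverts the paper's. You first place $W_N$ in the Barth--Peters family by a discriminant-form computation --- which is essentially the content of Corollary \ref{cor: quotient YN is XN}, proved in the paper after this lemma by an independent lattice computation, so there is no circularity --- and then try to read off the base-change structure from the explicit model of $\mX$. The paper instead argues upstream of the quotient: the fibration $[2I_2^*,4I_2]$ with full two-torsion on $\Km(E\times E')$ is already a quadratic base change of a rational elliptic surface; the two-torsion section $R'+R''$ is pulled back from that rational surface, hence translation by it commutes with the deck transformation; therefore the quotient $W_N$ inherits the base-change structure on the \emph{specific} $[2I_4^*]$-fibration carrying the section $Q$. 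That last point matters: in the geometric proof of Theorem \ref{thm} one needs the deck transformation on precisely this fibration, not merely the existence of some base-change fibration somewhere on $W_N$, which is all your first paragraph could deliver.

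The gap sits in your second paragraph, where the actual content of the lemma is supposed to be verified. The ``direct computation'' on \eqref{eq:4*} is only asserted, and the proposed abstract substitute fails as stated. The deck transformation $\imath$ of the $[2III^*,2I_2]$-model \eqref{eq:BP} fixes the zero section and the two-torsion section and swaps the two $III^*$ fibres and the two $I_2$ fibres; it therefore sends the first $I_4^*$-divisor of Figure \ref{Fig:BP-basic} (built from the $III^*$ fibre at $0$, the section $O$ and \emph{identity} components) to a divisor built from the $III^*$ fibre at $\infty$, the section $O$ and identity components --- which is not the second $I_4^*$-divisor (built from the two-torsion section and \emph{far} components). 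So $\imath$ does not preserve the $[2I_4^*]$-fibration. The involution that does swap the two $I_4^*$ fibres is $\imath$ composed with translation by $(0,0)$; but that is the fixed-point-free Enriques involution, whose quotient is an Enriques surface rather than a rational elliptic surface, so it cannot be the deck transformation of a quadratic base change either. One must compose with a further translation (or carry out the coordinate computation on \eqref{eq:4*}), and this is exactly the verification your sketch omits. The cleanest repair is the paper's: establish the base-change structure on the $[2I_2^*,4I_2]$-fibration of the Kummer surface, where it follows from the fibre configuration and the full two-torsion, and then push it through the two-torsion quotient.
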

\begin{proof}
The according statement holds true for the previous fibration due to the singular fibers and full two-torsion.
But then the sections respect the base change property, and so does the quotient.
\end{proof}

\begin{Corollary}\label{cor: quotient YN is XN}
$W_N$ is a member of the Barth--Peters family as in \ref{ss:222}.
We have 
\[
T(W_N)=U(2)+\langle 2N\rangle,\;\;\; \NS(W_N)=U(2)+2E_8(-1)+\langle-2N\rangle.
\]
\end{Corollary}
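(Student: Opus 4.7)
The plan is to identify $W_N$ with the K3 surface $X_N$ constructed lattice-theoretically in Section \ref{s:lat}, from which both lattice statements follow at once by Proposition \ref{prop: sp X to XN}.

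The first step is to place $W_N$ inside the Barth--Peters family $\mX$. The preceding lemma already gives that $W_N$ is obtained as a quadratic base change of a rational elliptic surface. Combined with the fiber structure produced in Section \ref{ss: Z2 quotient}---two reducible fibers of Kodaira type $I_4^*$ together with a two-torsion section---this matches exactly the $[2 I_4^*]$ elliptic fibration on the Barth--Peters family described in \ref{ss:222}. Since K3 surfaces with $\NS \supseteq U(2) + 2E_8(-1)$ form a unique two-dimensional family by Theorem \ref{theorem: Nikulin 1.14.4}, we conclude $W_N \in \mX$.

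Next I would pin down the Picard rank. The quotient $\Km \dashrightarrow W_N$ is a rational dominant map of degree two between K3 surfaces, hence preserves the rank of the transcendental lattice by the result of Inose recalled in \ref{ss:SI}. Starting from $\rho(\Km) = 19$, this forces $\rho(W_N) = 19$, so $W_N$ lies in some one-dimensional subfamily $\mX_M$ of $\mX$ (cf.~\ref{ss:X_N}). To identify $M = N$, I would track the section $P''$ of height $N$ on the third elliptic fibration of $\Km$ (constructed in \ref{ss:fib2}) through the $\Z/2\Z$ quotient to a divisor class on $W_N$. The descended class lies in the orthogonal complement of $U(2) + 2E_8(-1)$ inside $\NS(W_N)$, and after accounting for the two-torsion section quotiented out and the fiber components absorbed along the way, its self-intersection equals $-2N$. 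This exhibits a primitive embedding
\[
U(2) + 2E_8(-1) + \langle -2N \rangle \hookrightarrow \NS(W_N),
\]
and since the left-hand side already has rank $19 = \rho(W_N)$, equality holds. The claim $T(W_N) = U(2) + \langle 2N \rangle$ then follows directly from Proposition \ref{prop: sp X to XN}.

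The main obstacle is the height computation in the last step: the chain of elliptic fibration reparametrisations on $\Km$ and the degree-two quotient alter the natural divisor class of $P''$ by fiber components, torsion sections, and a potential overall scaling coming from the quotient. Verifying that the resulting class on $W_N$ has height precisely $N$ (rather than, say, $N/2$ or $2N$, which a generic 2-isogeny of K3 surfaces could a priori produce) requires a careful bookkeeping of the intersection numbers with the $I_4^*$ fibers and of the Mordell--Weil height pairing on $W_N$.
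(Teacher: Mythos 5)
Your route is essentially the paper's (descend $P''$ to a section $Q$ on $W_N$ and use its class orthogonal to the trivial lattice to produce the $\langle-2N\rangle$ summand), but the two places where you wave your hands are exactly where the proof lives. First, the phrase ``this exhibits a primitive embedding'' is unjustified, and primitivity is the whole point: what the construction gives directly is a full-rank sublattice
$L = U + \bigl(2D_8(-1)\text{ glued by the two-torsion section}\bigr) + \langle-2N\rangle \subseteq \NS(W_N)$,
and equality does not follow from $\rho(W_N)=19$, since $\NS(W_N)$ could a priori be a proper index-two overlattice, which would change both $\NS(W_N)$ and $T(W_N)$. The paper closes this gap by noting that any such overlattice would force $Q$ or the two-torsion section to become divisible in $\MW(W_N)$, that only $2$-divisibility could occur because $W_N$ is related to $\Km$ by a $2$-isogeny, and that $2$-divisibility is ruled out by the height formula: it would produce a section of non-integral height $N/2$, whereas $I_4^*$ fibers contribute only integral correction terms. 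Nothing in your write-up replaces this argument.

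Second, the ``bookkeeping'' you defer at the end is not a formality, and your stated target is internally inconsistent: you first claim the descended class has square $-2N$ and later say you must verify the section has ``height precisely $N$''. In fact the induced section $Q$ on $W_N$ has height $2N$, meets \emph{both} $I_4^*$ fibers at far simple components, and it is the sum of $Q$ with the two-torsion section whose class is orthogonal to both $D_8(-1)$ summands with square $-2N$; this intersection data, not just the height, determines the answer. That this matters is shown by \ref{ss:2nd}: an equally natural variant of the same chain of fibrations yields a section of height $2N$ meeting only one $I_4^*$ fiber at a far component, and then $T(W_N)=U+\langle 8N\rangle$ rather than $U(2)+\langle 2N\rangle$. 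So the component-by-component verification you postpone, together with the non-divisibility argument above, is precisely what must be supplied. (Your preliminary step placing $W_N$ in the Barth--Peters family via the $[2I_4^*]$ fibration of \ref{ss:222} is fine in spirit, though the paper obtains $W_N\in\mX$ as a corollary of the lattice computation rather than as an input.)
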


\begin{proof}
The section $P''$ on the third elliptic fibration on $\Km$ induces a section $Q$ of height $2N$ on $W_N$.
Here $Q$ meets both $I_4^*$ in a far simple component.
By construction, the two-torsion section meets the same components,
so their sum $R$ is orthogonal to the two summands of $D_8(-1)$ in the trivial lattice corresponding the $I_4^*$ fibers.
Thus we find the following sublattice $L$ of $\NS(W_N)$:
\begin{eqnarray}
\label{eq:NS-x}
L = U + (2D_8(-1)+\Z/2\Z) + \langle-2N\rangle.
\end{eqnarray}
Assume that $L\neq \NS(W_N)$.
Then there is a divisible section in $N$, i.e. either $Q$ or $R$ is divisible.
But since they are related to $\Km$ by a $2$-isogeny, these sections could only be $2$-divisible.
However, this is impossible in the present situation
since it would result in a non-integer height $N/2$ while all correction  terms in the height formula are integers
(since singular fibers of type $I_4^*$ have only integer correction terms).

We conclude that $L=\NS(W_N)$ 
and immediately find the claimed representations for $T(W_N)$ and $\NS(W_N)$.
In particular this implies that $W_N\in\mX$. More precisely the surface $W_N$ is a general member of the family $\mX_N$ and so it is the surface called $X_N$ in Section \ref{ss:X_N}
\end{proof}

\subsection{Geometric proof of Theorem \ref{thm}}

We claim that $X_N\in\mX_N$ is a K3 surface proving Theorem \ref{thm}.
Note that $X_N$ admits a rational map of degree $2$ to $\Km(E\times E')$ 
given by the $2$-isogeny with the third elliptic fibration on $\Km(E\times E')$.

The Enriques involution $\tau$ on the Barth--Peters family 
descends to the special member $X_N$.
On the above fibration with two $I_4^*$ fibers, it is given by deck transformation of the quadratic base change composed with translation by the two-torsion section (cf.~\cite{HS}).
The invariant sublattice is contained in the trivial lattice.
By \eqref{eq:NS-x} the section $R$ is associated to a summand $D$ in $\NS(X_N)$ that is orthogonal to the trivial lattice.
Thus $D$ is anti-invariant for $\tau^*$.
As $D^2=-2N$, we find that $\pi^* \Br(X_N/\tau) = \{0\}$ by Theorem \ref{thm:Beau}.
\qed

\subsection{CM}
\label{ss:comments}

Throughout this chapter we have assumed that the isogenous elliptic curves $E, E'$ do not have CM.
This assumption serves to rule out the special members with $\rho=20$, i.e.~the singular K3 surfaces in the family.
The assumption is not strictly necessary as it only serves two minor purposes: 
First to exclude those singular K3 surfaces 
where the involution fails to be fixed point free 
or the singular fibers degenerate (as seen in \ref{ss:spec});
secondly to ensure that the K3 surfaces are indeed not Kummer as stated in Theorem \ref{thm}.
In practice, it will always suffice for our purposes to exclude a finite number of CM-points
(see Proposition \ref{prop:spec} for the family with $N=3$).

\subsection{}
\label{ss:2nd}

It is instructive to consider a second way to derive the second and  third elliptic fibration from \ref{ss:Km}.
Namely when extracting the second elliptic fibration,
we could opt to include the component met by $P$ in the divisor of Kodaira type $I_2^*$ 
(possibly at the cost of another extension of the base field).
The overall construction goes through as before,
but in the end the section $P''$ induced by $P$ on the third elliptic fibration shows a different intersection behaviour than before (even up to addition of two-torsion sections).
On the $\Z/2\Z$ quotient $W_N$ we still obtain a section $Q$ of height $2N$, but this time meeting only one $I_4^*$ fiber at a far simple component. 
Thus one finds the transcendental lattice $T(W_N)=U+\langle 8N\rangle$.

\section{Elliptic fibrations and moduli of the family $\mX_3$}
\label{s:N=3}

In sections \ref{s:lat} and \ref{s:geom} we considered generally complex K3 surfaces $X_N$ with 
$\NS(X_N) \simeq U(2) + 2E_8(-1) + \langle -2N\rangle$
for odd $N>1$.
We derived an Enriques involution geometrically as well as lattice theoretically
and showed that the Brauer group pulls back identically to zero.
%Before investigating the general case,
%we will consider one family (for $N=3$) in detail that was our personal starting point for this paper.
%We hope that its analysis will both illustrate our methods and give the reader an idea how the constructions can be carried out explicitly.
Here we consider one of these families in detail,
the family $\mX_3$ such that 
%MS : X or \mX
\begin{equation}\label{eq:NS} \NS(\mX_3)\simeq U(2) + 2E_8(-1) + \langle -6\rangle.\end{equation}
For this family we give an explicit equation (defined over $\Q$) 
answering the problem posed in Section \ref{ss: Beauville's problem}.
We describe the Hodge structure (given by a pair of 3-isogenous elliptic curve $E$ and $E'$) 
and its specialisations (related to the complex multiplication on the elliptic curves $E$ and $E'$).
We hope that its analysis will both illustrate our methods and give the reader an idea how the constructions can be carried out explicitly.
%\subsection{}
%We recall that $\mX_3$ is the one-dimensional family of K3 surfaces
%such that generally
%\begin{eqnarray}
%\label{eq:NS}
%\NS(\mX_3) = U(2) + 2E_8(-1) + \langle-6\rangle.
%\end{eqnarray}
%Up to isometry, the lattice $\NS(\mX_3)$ admits a unique embedding into the K3 lattice $\Lambda=3U+2E_8(-1)$, 
%so the family $\mX_3$ is uniquely determined by \eqref{eq:NS} 
%by the moduli theory of lattice-polarised K3 surfaces.
%Here the primitive embedding \eqref{eq:prim} for $N=3$ is achieved  
%by realising $E_8(-2)$ diagonally in the two copies of $E_8(-1)$ (cf. \ref{zzzz}).
%By construction, this induces an Enriques involution $\tau$ on the general member of $\mX_3$ (cf. \ref{zzz}).
%The generator $D$ of the orthogonal summand $ \langle-6\rangle$ sits inside the orthogonal complement of $U(2)+E_8(-2)$, hence it is anti-invariant under $\tau^*$.
\subsection{}

In \ref{ss: Z2 quotient} the surfaces $X_N$ are constructed as quotients of known Kummer surfaces, 
but without explicit equations. 
In order to find an explicit equation for the family $\mX_3$,  
we will exhibit a convenient jacobian elliptic fibration on $\mX_3$ 
(which is not among the ones coming from the geometric construction of Section \ref{ss:X_N}).
In the first instance, this amounts to writing $\NS(\mX_3)$ as an orthogonal sum of the hyperbolic plane $U$
and an even negative-definite lattice $L$. 
Preferably $L$ is a root lattice, since then the Mordell-Weil group of the elliptic fibration is finite (cf.~\ref{ss:ell}).
We will proceed in two steps related to the isomorphisms
\begin{eqnarray}
\label{eq:1}
\NS(\mX_3) & \simeq & U+D_8(-1)+E_8(-1) + \langle-6\rangle\\
\label{eq:2} & \simeq & U + D_8(-1) + E_7(-1) + A_2(-1).
\end{eqnarray} 
A direct computation shows that the lattices in \eqref{eq:NS}, \eqref{eq:1} and \eqref{eq:2} have the same signature, the same discriminant group, %which satisfy the condition in Proposition \ref{prop: uniquely of a lattice in primitve lattice, condition on discriminant} 
and the same discriminant form.  By Proposition \ref{prop: uniquely of a lattice in primitve lattice, condition on discriminant} they are isometric.
The elliptic fibration corresponding to the decomposition \eqref{eq:2} of $\NS(\mX_3)$ is particularly convenient since it only involves $U$ and root lattices.
By \ref{ss:ell} the latter correspond to reducible singular fibers of type $I_4^*, III^*, I_3$ (or $IV$ a priori).
In particular, the last elliptic fibration has no sections other than the zero section.

\subsection{The elliptic fibration $[I_4^*, III^*, I_3]$ on $\mX_3$}
We explain how to find a model of the last elliptic fibration \eqref{eq:2}.
Consider the quadratic twist at the non-reduced fibers which replaces them by fibers of type $I_4, III$.
This results in a family of rational elliptic surfaces $\fS$ with configuration of singular fibers [1,1,3,4,III].
Let $k$ be any field of characteristic different from $2$.
In extended Weierstrass form, the family of rational elliptic surfaces can easily be parametrised over $k(r)$ as
\[
\fS:\;\;\; y^2 = x^3- t (r^2 t-1-2 r) x^2-2 (t+1) t r (r t-1) x-(t+1)^2 t^2 r^2.
\]
As required the given model has the following reducible singular fibers: 
$$
\begin{array}{ccc}
\hline
III & I_3 & I_4\\
\hline
0 & -1 & \infty\\
\hline 
\end{array}
$$
Note that the general member of the family $\fS$ has Mordell-Weil rank two by the Shioda-Tate formula
\cite[Cor.~5.3]{ShMW}.
These sections are not preserved under the quadratic twist.
The family $\mX_3$ with elliptic fibration corresponding to the decomposition \eqref{eq:2} 
is recovered by a quadratic twist at $0$ and $\infty$
(i.e.~the fibrations become isomorphic over $k(\sqrt t)$):
\begin{align}
\label{eq:mX3}
\mX_3:\;\;\; y^2 = x^3- t^2 (r^2 t-1-2 r) x^2-2 (t+1) t^3 r (r t-1) x-(t+1)^2 t^5 r^2.
\end{align}
Here a general member of the family $\mX_3$ has $\rho=19$ and $\NS$ given as above with $\MW=\{O\}$.
Our next aim is to write down the Enriques involution explicitly
and to give the anti-invariant divisor.
Later we will study the parametrising curve and special members of the family.

\subsection{The elliptic fibration $[I_4^*,II^*]$ on $\mX_3$}
In order to find the elliptic fibration on $\mX_3$ corresponding to \eqref{eq:1}, it suffices to determine 
a suitable divisor of Kodaira type $II^*$.
In the present situation,
this divisor is extracted from the fiber of type $III^*$ extended by zero section and identity component of the fiber of type $I_3$.
In $\NS(\mX_3)$, this leaves the orthogonal summand $D_8(-1)$ formed by the non-identity components of the $I_4^*$ fiber.
The two other components of the $I_3$ fiber serve as zero section on the one hand and section of height $6$ on the other.
We sketch these $(-2)$-curves in the following diagram:

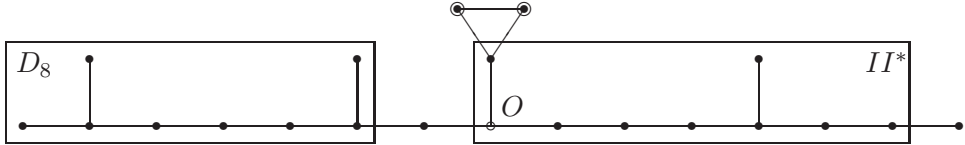
\begin{figure}[ht!]
\setlength{\unitlength}{.35in}
\begin{picture}(14,2.35)(-.15,0.65)
\thinlines

\put(7,1){\circle{.1}}
\put(7.15,1.15){$O$}

% O':

\multiput(0,1)(1,0){7}{\circle*{.1}}

\multiput(8,1)(1,0){7}{\circle*{.1}}

\put(0,1){\line(1,0){14}}

\multiput(1,2)(4,0){2}{\circle*{.1}}
\multiput(1,2)(4,0){2}{\line(0,-1){1}}

\multiput(7,2)(4,0){2}{\circle*{.1}}
\multiput(7,2)(4,0){2}{\line(0,-1){1}}

\put(6.5,2.75){\circle*{.1}}
\put(7.5,2.75){\circle*{.1}}
\put(6.5,2.75){\line(1,0){1}}
\put(6.5,2.75){\line(2,-3){.5}}
\put(7.5,2.75){\line(-2,-3){.5}}

% new sections:
\put(6.5,2.75){\circle{.2}}
\put(7.5,2.75){\circle{.2}}

\put(-.25,0.75){\framebox(5.5,1.5){}}
\put(-.1,1.8){$D_8$}

\put(6.75,.75){\framebox(6.5,1.5){}}
\put(12.6,1.8){$II^*$}

%\put(7.5,1.5){\framebox(3.25,3){}}

\end{picture}
\caption{Divisors of type $II^*$ and $D_8$}
\label{Fig:fib0}
\end{figure}

In the terminology of \cite{KS} we shall work with the following elliptic parameter
with respect to the equation \eqref{eq:mX3}:
\[
u=(x-rt^2(t+1)/2)/(t^3(t+1)).
\] 
After some variable transformations, one obtains the Weierstrass form
\begin{eqnarray}
\label{eq:(4)}
\mX_3: y^2  & = & 
t^3+2 u (r^3-8 r u-4 u) t^2+16 u^4 (1-4 r+2 r^2) t+128 r u^7
\end{eqnarray}
with reducible fibers of type $II^*$ at $u=\infty$ and $I_4^*$ at $u=0$.
The section of height $6$ is given in terms of its $t$-coordinate as
\begin{eqnarray*}
(-32 u^5+(64 r^2+336 r+128) u^4
+(-32 r^4-320 r^3-720 r^2-192 r-128) u^3\\
+8 r (6 r^4+32 r^3+21 r^2-20 r+8) u^2
-2 r^3 (12 r^3+24 r^2-27 r+8) u+r^5 (2 r-1)^2)\\
\times \frac 1{16} (r-2u)/(r^2-2 r u-r-2 u)^2
\end{eqnarray*}
Thus we have indeed found the elliptic fibration corresponding to \eqref{eq:1}.
From this, one can derive a double quartic model 
associated to the decomposition \eqref{eq:NS} of $\NS(\mX_3)$ 
after adjoining a square root via $r=(1-q^2)/4$.
%MS : X or \mX

% \subsection{The double quartic associated to decomposition \eqref{eq:NS} of $NS(X_3)$}
% For \eqref{eq:NS},
% we continue to extract a second divisor of type $II^*$ from $I_4^*$ extended by zero section.
% This leaves the root lattice $E_8(-1)$ coming from the original fiber of type $II^*$ minus its identity component.
% %as for the Barth-Peters family $\mX$.
% Extracting the new divisor of type $II^*$ requires to make the two far simple components of the $I_4^*$ fiber rational.
% For this purpose, one can set $r=(1-q^2)/4$.
% Ultimately the parameter $s=(t-32u^3(q+1)/(q-1)^3)/u^4$
%  leads to the following double quartic (in the coordinates $u,y$ of $\A^2$ and $s$ of the base curve):
% \begin{eqnarray*}
% \mX_3:
% y^2 & = & s^3 u^4+3s^2 (q+1) u^3+(s^2 (q^2-3)+3 (q+1)^2 s) u^2\\
% && \;\;\; +(q+1)\left(-\frac 1{16} s^2 (q+1)^2+2 (q^2-3) s+(q+1)^2\right) u\\
% && \;\;\;\; - \frac 12 q (q^2+1) s+(q^2-3) (q+1)^2.
% \end{eqnarray*}
% Thus we have found the elliptic fibration on $\mX_3$ exhibiting the elliptic fibration \eqref{eq:NS}.
% However, the Enriques involution is not directly visible from either model so far.

\subsection{Enriques involution and the elliptic fibration $[2III^*, 2I_2]$ on $\mX_3$}
To exhibit the specified Enriques involution on the family $\mX_3$ explicitly, we use the Barth-Peters family for which we have worked out the Enriques involution
in \ref{ss:cti}.

%Recall that
%an Enriques involution on $\mX$ can be given in terms of the elliptic fibration \eqref{eq:BP}
%along the lines of \cite{HS} as
%\[
%\tau = (\text{deck transformation for } f) \circ (\text{translation by } (0,0)).
%\]
%%Note that the quotient family of  Enriques surfaces $\mY=\mX/\tau$ admit a cohomologically trivial involution induced by $-id$ on the K3 family $\mX$.

To find the Enriques involution on $\mX_3$, 
it suffices to exhibit an elliptic fibration with two reducible fibers of type $III^*$ and $I_2$ each and two-torsion in $\MW$.
Then the Mordell-Weil lattice of a general member will have rank one and a generator of height $6$.
(The two-torsion condition is crucial since the family $\mX_3$ does also admit an elliptic fibration with the same singular fibers, but without torsion in $\MW$, so in that case $\MWL=\langle 3/2\rangle$).

We work with the fibration on $\mX_3$ corresponding to \eqref{eq:1}.
In terms of the model in \eqref{eq:(4)}, 
the elliptic parameter $v=u/t^3$
extract a divisor of type $III^*$ from components of the $I_4^*$ fiber extended by zero section and identity component of the $II^*$ fiber.
The adjacent fiber components form the new zero and two-torsion section;
the remaining fiber components form one  divisor of  type $E_7$
and  two root lattices of type $A_1$.
We sketch these rational curves in Figure \ref{Fig:BP0}
where we only omit the additional $\MW$ generator of height $6$ for simplicity.

\begin{figure}[ht!]
\setlength{\unitlength}{.33in}
\begin{picture}(16,1.8)(0,-.3)

\thinlines

\multiput(0,0)(1,0){7}{\circle*{.1}}
\multiput(1,1)(4,0){2}{\circle*{.1}}
\multiput(8,0)(1,0){8}{\circle*{.1}}
\put(7,0){\circle{.1}}
\put(13,1){\circle*{.1}}
% \multiput(1,2)(2,0){2}{\circle*{.1}}
% 
\put(0,0){\line(1,0){15}}
\put(1,0){\line(0,1){1}}
\put(5,0){\line(0,1){1}}
\put(13,0){\line(0,1){1}}
% 
% \multiput(0,2)(3,0){2}{\line(1,0){1}}
% \multiput(2,0)(0,3){2}{\line(0,1){1}}
% 
\put(7.05,.15){$O$}
%\put(3.45,-.4){$(x=-1)$}

%\put(-.55,4.25){$(x=1)$}
%\put(3.45,4.25){$(y=1)$}
% 
% \put(-.25,3.5){\line(1,-1){3.75}}
% \put(-.25,3.5){\line(0,-1){3.75}}
% \put(-.25,-.25){\line(1,0){3.75}}
% 
% \put(.5,4.25){\line(1,-1){3.75}}
% \put(.5,4.25){\line(1,0){3.75}}
% \put(4.25,4.25){\line(0,-1){3.75}}
% 
\put(1,0){\circle{.2}}
\put(9,0){\circle{.2}}
% 
% 
% 
% \put(4.8,1.9){$\rightsquigarrow$}
% 
% 
% \multiput(6,0)(1,0){5}{\circle*{.1}}
% \multiput(6,4)(1,0){5}{\circle*{.1}}
% \multiput(6,1)(0,1){3}{\circle*{.1}}
% \multiput(10,1)(0,1){3}{\circle*{.1}}
% \multiput(8,1)(0,2){2}{\circle*{.1}}
% \multiput(7,2)(2,0){2}{\circle*{.1}}
% 
% \put(6,0){\line(1,0){4}}
% \put(6,4){\line(1,0){4}}
% \put(6,0){\line(0,1){4}}
% \put(10,0){\line(0,1){4}}
% 
% \multiput(6,2)(3,0){2}{\line(1,0){1}}
% \multiput(8,0)(0,3){2}{\line(0,1){1}}
% 
\put(1.75,-.25){\framebox(6.5,1.5){}}
\put(1.9,.75){$III^*$}
\put(9.75,-.25){\framebox(5.5,1.5){}}
\put(14.6,.75){$E_7$}
% 
% \put(8,4){\circle{.2}}
% \put(8,0){\circle{.2}}
% 

\end{picture}
\caption{Divisors of type $III^*$ and $E_7$ vs.~$II^*$'s and $I_4^*$}
\label{Fig:BP0}
\end{figure}
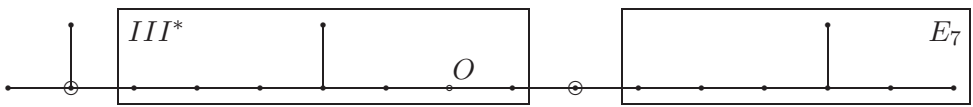

In suitable coordinates, we obtain the Weierstrass form
\begin{eqnarray}
\label{eq:BP'}
\;\;\;\;\;\;\;
\mX_3: \;\;\; y^2 = x (x^2 -8 t^2 (1+2 r) x + 2 t^3 (64 r+8 t-32 r t+16 r^2 t+t^2 r^3)).
\end{eqnarray}
The two-torsion section is $(0,0)$ as before.
The deck transformation $\imath$ for the quadratic base change $f$ is given by
\[
\imath: \, (x,y,t) \mapsto (\alpha^2 x/t^4, \alpha^3 y/t^6, \alpha/t),\;\;\; \alpha=64/r^2.
\]
The quotient by the deck transformation $\imath$ is an extremal rational elliptic surface with $\MW=\Z/2\Z$.
It follows that the section $P$ of height $6$ is anti-invariant for $\imath^*$.
Thus it is induced by a section $P'$ of height $3$ on the quotient of $\mX_3$ by the Nikulin involution $(-id) \circ \imath$.
Thanks to the low height and the presence of two-torsion, the section $P'$ is not hard to find.
Here we only give $P$ in terms of its $x$-coordinate:
\[
\frac 1{256}
\dfrac{(t^2 r^2+64+16 r t-32 t)^2 (r t-8)^2}{(r t+8)^2}
\]
As required, $P$ and $O$ intersect exactly at one of the ramification points of the quadratic base change, $t=-8/r$ (so that $P\cdot O=1$) while $P$ does not meet any fiber at a non-identity component.
An anti-invariant divisor on $\mX_3$ for the induced action of the Enriques involution $\tau$ (%defined 
%as above on $\mX$ and specialised to $\mX_3$, 
composition of $\imath$ and translation by the two-torsion section $(0,0)$) is then given as
\begin{eqnarray}
\label{eq:phi(P)}
\varphi(P) = P-O-3F,\;\;\; \varphi(P)^2=-6.
\end{eqnarray}
This can be seen as follows: 
the Enriques lattice $U(2)+E_8(-2)$ embeds primitively into $\NS(\mX)$.
On $\mX_3$ where we have additional sections, this specialises to a primitive embedding into the trivial lattice of the given elliptic fibration.
By definition, 
$\varphi(P)$ is orthogonal to the trivial lattice of the elliptic fibration 
(as in the theory of Mordell-Weil lattices).
Hence it is anti-invariant for $\tau^*$
(a fact that can also be checked explicitly with Mordell-Weil lattices as in \cite{HS}).
%\pagebreak

\subsection{Moduli}
\label{ss:moduli}

In order to determine the moduli curve of the family $\mX_3$,
we can argue with the Kummer structure of the fibration on $\mX_3$  
with two fibers of type $I_4^*$ as in \ref{ss:Km-str}, \ref{ss:ec}.
Thus we find a relation to a product of elliptic curves.
Since the Picard number is generically $19$,
these elliptic curves ought to be isogenous.
We will relate them to
the modular curve $X^*(6)=X_0(6)/\langle w_2, w_3\rangle$ where we divide out $X_0(6)$
by all Fricke involutions.
This curve parametrises elliptic curves over biquadratic extensions of $\Q$ with prescribed isogenies to their Galois conjugates.
Details (mostly in the context of $\Q$-curves) can be found in \cite{Q}.
There a Hauptmodul $a$ for $X^*(6)$ is fixed.

\begin{Theorem}
\label{thm:6}
The family $\mX_3$ is parametrised by $X^*(6)$.
The parameter $r$ is related to the Hauptmodul $a$ of $X^*(6)$ by
$a=-2(r+2)/(4r-1)$.
\end{Theorem}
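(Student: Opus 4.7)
The plan is to build on the Kummer--isogeny framework of Section~\ref{s:geom}. By Corollary~\ref{cor: quotient YN is XN}, a generic $X_3\in\mX_3$ admits a degree-two rational map to $\Km(E\times E')$, where $(E,E')$ is a pair of complex elliptic curves related by a cyclic $3$-isogeny (generically without CM). By Sections~\ref{ss:ec} and \ref{conclusion}, the Hodge structure of $X_3$ is determined by the unordered pair $(E,E')$, so the moduli space $\mX_3$ is equivalent to the moduli space of unordered $3$-isogenous pairs of elliptic curves, enhanced by the additional $2$-torsion datum used in the $\Z/2\Z$-quotient $\Km(E\times E')\to X_3$ from \ref{ss: Z2 quotient}.

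This enhancement is exactly what distinguishes $X^*(6)=X_0(6)/\langle w_2,w_3\rangle$ from $X_0^+(3)$. A point of $X_0(6)$ corresponds to a factorisation of a cyclic $6$-isogeny into cyclic isogenies of degrees $2$ and $3$; the Atkin--Lehner involution $w_3$ swaps the direction of the $3$-isogeny (i.e.\ exchanges $E\leftrightarrow E'$), while $w_2$ absorbs the choice of the compatible $2$-torsion section used in the construction of Section~\ref{s:geom}. Since both symmetries are already quotiented out by the ambiguities inherent in our construction, the moduli of $\mX_3$ is naturally $X^*(6)$.

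To exhibit the Möbius relation explicitly, the plan is to express $j(E)$ and $j(E')$ as algebraic functions of~$r$. One route matches the Weierstrass form \eqref{eq:BP'} to the Barth--Peters form \eqref{eq:BP} by a suitable rescaling of $(x,y,t)$, producing $a$ and $b$ (in the sense of \eqref{eq:BP}) as rational functions of $r$; substituting these into \eqref{eq:*} and \eqref{eq:+} yields the symmetric functions $jj'$ and $(j-12^3)(j'-12^3)$. Alternatively, one extracts the $[2IV^*]$ fibration of \ref{ss:Km-str} directly from \eqref{eq:mX3} and applies the Inose formulas to the resulting normal form. On the $X^*(6)$ side, the Hauptmodul $a$ (in the normalisation of \cite{Q}) is a known rational function of the symmetric functions of $j$ and $j'$ for a $3$-isogenous pair, inherited from the genus-zero covering $X^*(6)\to X_0^+(3)$. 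Eliminating $j,j'$ between the two expressions should produce a Möbius relation, which will be identified with $a=-2(r+2)/(4r-1)$.

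The main obstacle is normalisation: several mutually incompatible Hauptmoduls for $X^*(6)$ appear in the literature, differing by Möbius transformations, and one must track every sign and factor through the coordinate changes in the Kummer reconstruction of Section~\ref{s:geom}. A useful cross-check is to compare distinguished fibres on both sides: for example, substituting a value of $r$ for which $(E,E')$ acquires CM (cf.~\ref{ss:comments}) must match the corresponding elliptic point of $X^*(6)$, which calibrates the Möbius transformation uniquely.
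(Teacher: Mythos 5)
Your third and fourth paragraphs are essentially the paper's proof. The paper specialises the Inose relations \eqref{eq:*}, \eqref{eq:+} to $\mX_3$, obtaining $jj'$ and $j+j'$ as explicit functions of $q=\sqrt{1-4r}$; it takes the $j$-invariants of the curves parametrised by $X^*(6)$ from the Weierstrass forms in \cite{Q}; it locates CM specialisations of $\mX_3$ over $\Q$ numerically (point counting and lifting as in \cite{ES}, Table \ref{tab-CM}) and matches them with CM points of $X^*(6)$ to \emph{guess} $a=-2(r+2)/(4r-1)$, equivalently $q=3/\sqrt{2a+1}$; and it then verifies that with this substitution the $j$-invariant \eqref{eq:j} and its conjugate under $t\mapsto -t$ solve the system for $(j,j')$ --- and this verification alone is what proves the theorem. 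Your elimination-plus-CM-calibration plan is the same computation organised in the opposite order, so the computational core of your proposal is sound and matches the paper.

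The genuine gap is in your first two paragraphs, which you offer as the proof that the moduli curve is $X^*(6)$. The extra datum in the construction of Section \ref{s:geom} is the choice of one of three $I_2$ fibres, i.e.\ essentially of a $2$-torsion subgroup compatible with the $3$-isogeny; forgetting that choice is a degree-$3$ forgetful map (of the type $X_0(6)\to X_0(3)$), not the degree-$2$ Atkin--Lehner quotient, and $w_2$ does not permute $2$-torsion choices on a fixed pair $(E,E')$ --- it replaces the pair by its $2$-isogenous partner, so your claim that ``$w_2$ absorbs the choice of the compatible $2$-torsion section'' is unsubstantiated. For the same reason there is no natural covering $X^*(6)\to X_0^+(3)$: a $\langle w_2,w_3\rangle$-orbit carries two distinct unordered $3$-isogenous pairs, swapped by $w_2$, and generically three distinct points of $X^*(6)$ share the same unordered $3$-isogenous pair. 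Consequently the Hauptmodul $a$ is \emph{not} a rational function of $jj'$ and $j+j'$; these symmetric functions are only rational in $\sqrt{2a+1}$ (and, on the K3 side, in $q=\sqrt{1-4r}$ rather than $r$), which is precisely why the relation cannot be obtained by purely rational elimination and why the paper pins down the branch via CM points and then checks the resulting $j$-invariant identity. This is also the subtlety flagged in Remark \ref{rem:field}. Your argument goes through if you drop (or demote to motivation) the level-structure discussion and let the explicit $j$-invariant verification, calibrated by the CM points, carry the whole statement --- which is exactly what the paper does and what your concluding paragraph already gestures at.
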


\begin{proof}
For the elliptic curves parametrised by $X^*(6)$, a Weierstrass form is given in \cite{Q}.
In particular, we obtain the j-invariants of these elliptic curves.
Since the field of definition is $\Q(\sqrt{a},\sqrt{2a+1})$,
it is actually more convenient to write
\begin{eqnarray}
\label{eq:a}
a=a(t) = \left(\frac{2t}{2t^2-1}\right)^2
\end{eqnarray}
which makes both square roots rational in $t$.
Then the $j$-invariants are represented by
\begin{eqnarray}
\label{eq:j}
j(t)=\frac{6912 (5 t^3+6 t^2-2)^3 t^3}{(2 t-1) (t+1)^2 (2 t+1)^3 (t-1)^6}
\end{eqnarray}
up to conjugation.
We will show that these j-invariants coincide with those coming from the Kummer structure on $\mX_3$.
This suffices to prove the theorem.

Arguing as for the full Barth-Peters family in \ref{ss:Km-str}, \ref{ss:ec}, only specialised to $\mX_3$,
we find the following relations in terms of the parameter $q=\sqrt{1-4r}$:
\[
j\cdot j'=4096 \frac{(q-3)^3 (25 q^3+15 q^2+3 q-51)^3}{(q+1)^8 (q-1)^4},
\]
\[
j+j' = 128 \frac{(125 q^6+800 q^5-715 q^4-3400 q^3+7511 q^2-5464 q+1399) (q-3)^3}{(q-1)^3 (q+1)^6}.
\]
We now proceed in three steps.
First we employ the modularity methods of point counting and lifting from \cite{ES} 
to find numerically members over $\Q$ of the family $\mX_3$ with $\rho=20$.
We find CM-values of $r$ as given in Table \ref{tab-CM}.
Then we try to match these values with the CM-points of $X^*(6)$.
This leads exactly to the given relation between the parameters $r$ and $a$.
Note that this relation is still conjectural, but it gives $q=3/\sqrt{2a+1}$.
Next we insert \eqref{eq:a} for $a$ which leads to
\[
q=\pm \frac{3 (2 t^2-1)}{1+2 t^2}
\]
The negative sign choice gives precisely the j-invariant \eqref{eq:j} and its conjugate by $t\mapsto -t$ as  solutions to the system of equations for $j, j'$ coming from the Kummer structure.
Thus $\mX_3$ is indeed parametrised by $X^*(6)$,
and the conjectural relation between $r$ and $a$ holds true as claimed.
\end{proof}

%\begin{rem}
%Along similar lines, one finds that
%the parameter $q$ required to extract the two $II^*$ fibers
%is a triple cover of a Hauptmodul of the Fricke modular curve $X_0^*(3)$ parametrising elliptic curves over a quadratic extension of $\Q$
%which are $3$-isogenous to their Galois conjugates.
%Note that $q=3\sqrt{2a+1}$ encodes precisely a subfield of the biquadratic extension of $\Q(a)$ 
%appearing for $X^*(6)$.
%\end{rem}

%Some details: parametrise other family $NS=U+2E_8(-1)+\langle -6\rangle=U+A_2(-1)+E_7(-1)+E_8(-1)$
%by $c$, relate to $X_0^+(3)$ and to $q$.

\subsection{Specialisations}
\label{ss:spec}

In \cite{HS}, an explicit singular K3 surface $X$ over $\Q$ was exhibited
with an Enriques involution $\tau$ over $\Q$ and a $\tau^*$-anti-invariant divisor $D$ over $\Q(\sqrt{-3})$ with $D^2=-6$.
This surface is given abstractly by the transcendental lattice $T(X)=\langle 4\rangle +\langle 6\rangle$.
In the above family $\mX_3$, it can be located by degenerating the two fibers of type $I_2$ of the elliptic fibration \eqref{eq:BP'} to one fiber of type $I_4$.
Actually there are two ways to achieve this degeneration: by specialising $r=1/2$ and $r=1/4$.
We will now distinguish these two cases.

In the first case, the section $P$ degenerates as well in the following sense:
its $x$-coordinate attains a double root at the $I_4$ fiber at $t=16$.
In consequence, the height drops to $h(P)=5$.
Thus the discriminant of the specialisation is $-20$.
The transcendental lattice of the special member has the following quadratic form and specialisation embedding:
\[
\begin{pmatrix} 4 & 2\\2 & 6\end{pmatrix} \hookrightarrow U(2) + \langle 6\rangle.
\]

If $r=1/4$, then the two $I_2$ fibers are merged without the section $P$ degenerating.
In terms of the elliptic fibration corresponding to the decomposition \eqref{eq:2},
this is explained by the degeneration $D_8\hookrightarrow D_9$.
Hence the special member $X$ has
\[
\NS(X) = U + A_2(-1)+E_7(-1)+D_9(-1)
\]
with transcendental lattice $T(X)=\langle 4\rangle +\langle 6\rangle$ as in \cite{HS}.
In the model \eqref{eq:BP'}, the splitting field of the $I_4$ fiber is $\Q(\sqrt{-3})$.
One can easily work out an isomorphism over $\Q(\sqrt{-3})$ with the model in \cite{HS}.
Note that for the above model, the invariant subspace of $\NS(X)$ under ${\tau^*}$ is fully Galois invariant.

Through the rational CM-points and cusps of the modular curve $X^*(6)$,
we find all other specialisations over $\Q$ with $\rho=20$.
Together with the previous two specialisations
and all corresponding discriminants $d$, we collect 
the CM-points (or rather their inverses) in Table \ref{tab-CM}.

\begin{table}[ht!]
$$
\begin{array}{c|ccccccccccccccccc}
d & -12& -15& -20& -24& -36& -48& -60& -72& -84\\ 
\hline
r^{-1} & -1/2& 5/8& 2& 4& -2& 25/4& -49/8& 12& -14\\
\end{array}
%\;\;\;\;\;\,
\hspace{.61cm}
$$
$$
\begin{array}{c|cccccccccccccccc}
d & -120 & -132& -168& -228& -312& -372& -408& -708\\
\hline
r^{-1} & 40 & -50& 112& -338& 1300& -3038& 4900& 
-140450\\
\end{array}
$$
\caption{CM-points of $\mX$ over $\Q$}
\label{tab-CM}
\end{table}
%\[
%[-12, -2], [-15, 8/5], [-20, 1/2], [-24, 1/4],
%[-36,-1/2],\,[-48,{\frac {4}{25}}],\,[-60,-{\frac {8}{49}}],\,[-72,1/12],\,[-84,-1/14],\,[-120,1/40],
%\]\[
%[-132,-{\frac {1}{50}}],\,[-168,{\frac {1}{112}}],\,[-228,-{\frac {1}{338}}],\,[-312,{\frac {1}{1300}}],\,[-372,-{\frac {1}{3038}}],
%\]
%\[
%[-408,{\frac {1}{4900}}],\,[-708,-{\frac {1}{140450}}]
%\]
\begin{Proposition}
\label{prop:spec}
Let $X$ be a special member of $\mX_3$ at $r\neq 0$.
Then the Enriques involution on $\mX_3$ specialises without fixed points to $X$
if and only if $r\neq -2$.
Moreover, if $r\neq -2$, there is a divisor $D$ on $X$
such that $\tau^*D=-D$ in $\NS(X)$ and $D^2=-6$.
In particular, $\pi^*(\Br(X/\tau))=\{0\}$.
\end{Proposition}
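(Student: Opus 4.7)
The plan is to apply the lattice criterion from Section~\ref{ss:N} together with Theorem~\ref{thm:Beau}. Set $L_X := (U(2)+E_8(-2))^{\perp_{\NS(X)}}$. Then $\tau$ specialises without fixed points iff $L_X$ contains no $(-2)$-vector, and $\pi^*\Br(X/\tau)=\{0\}$ as soon as $L_X$ represents an integer $\equiv 2\pmod 4$. For the general member of $\mX_3$ we have $L_X = E_8(-2)+\langle -6\rangle$ by \eqref{eq:orto}: this contains no $(-2)$-vector and represents $-6$. At each specialisation with $\rho(X)=20$, $L_X$ gains one rank, and the task is to control whether this enlargement introduces a $(-2)$-vector.

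The key geometric observation singles out $r=-2$. A direct computation of the discriminant of the fibration \eqref{eq:BP'} yields $\Delta \propto -32 r\, t^9 p(t)^2 q(t)$ with $q(t) = r^2 t^2 - 16(r+4)t + 64$ and $\disc_t(q) = 2048(r+2)$. Hence the two $I_1$ fibres of the general member collide into a new $I_2$ fibre precisely at $r=-2$, located at the point $t=4$, which is a fixed point of the deck transformation $\imath\colon t\mapsto 16/t$. On this ramified $I_2$ fibre $\imath$ acts trivially, so $\tau=\imath\circ t_{(0,0)}$ restricts to translation by $(0,0)$; this swaps the two components of the $I_2$ fibre while fixing each of its two nodes. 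Hence $\tau$ acquires fixed points at $r=-2$, and by the criterion $L_X$ must contain a $(-2)$-vector there.

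For every other special member $r\neq -2$ the same geometric analysis shows $\tau$ remains fixed-point-free. At $r=1/2$ and $r=1/4$, the two $I_2$ fibres merge into an $I_4$ fibre at a fixed point of $\imath$; again $\imath$ is trivial there and $\tau$ restricts to translation by $(0,0)$, but on $I_4$ this translation permutes the four nodes in two $2$-cycles without fixed points. For the remaining CM-points in Table~\ref{tab-CM} the singular fibre configuration of \eqref{eq:BP'} is unchanged from the general member, so $\tau$ specialises fixed-point-freely by inheritance. In every case $r\neq -2$, the divisor $\varphi(P)=P-O-3F$ from \eqref{eq:phi(P)} still satisfies $\tau^*\varphi(P)=-\varphi(P)$ and $\varphi(P)^2=-6$, so Theorem~\ref{thm:Beau} yields $\pi^*(\Br(X/\tau))=\{0\}$. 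The main obstacle is the uniform tracking of the enlargement of $L_X$ at each CM-point; the fixed-point analysis on the reducible fibres provides the conceptual backbone, but translating each geometric case cleanly into the presence or absence of a $(-2)$-vector in $L_X$ requires some care with overlattice relations.
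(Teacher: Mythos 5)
Your overall strategy is the same as the paper's: analyse fixed points of $\tau=\imath\circ t_{(0,0)}$ on the fibres at the ramification points of the quadratic base change, and then exhibit a $\tau^*$-anti-invariant divisor of square $-6$. But there are two genuine problems. The more serious one is the final claim that $\varphi(P)=P-O-3F$ remains anti-invariant ``in every case $r\neq -2$''. This fails at $r=1/2$. There the section $P$ degenerates: it meets the component $\Theta_2$ of the merged $I_4$ fibre, its height drops from $6$ to $5$, and consequently $P'=[-P+(0,0)]$ has $P'\cdot O=2$ instead of $3$. One then computes $\bigl(P-O-3F+\tau^*(P-O-3F)\bigr)\cdot O=1+2+2-6=-1\neq 0$, so $P-O-3F$ is \emph{not} orthogonal to the trivial lattice after adding its image and hence not anti-invariant at $r=1/2$ (generically the same computation gives $1+3+2-6=0$). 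The paper repairs this with the modified divisor $D=P-O-3F+\Theta_1+\Theta_2$ and checks anti-invariance using that $\tau$ rotates the $I_4$ fibre. Your proof as written produces a divisor that does not satisfy $\tau^*D=-D$ at one of the two most important specialisations.

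The second problem is at $r=-2$. You assert that translation by $(0,0)$ ``swaps the two components of the $I_2$ fibre while fixing each of its two nodes''; this is impossible for a translation on an $I_2$ fibre — if it swaps the components it necessarily swaps the two nodes (and is fixed-point free), while if it preserves the components it fixes the nodes. The statement that actually needs to be verified, and which drives the conclusion, is that $(0,0)$ meets the \emph{identity} component of the new $I_2$ fibre (forced, e.g., by the height formula: the correction terms of the torsion section already sum to $4$ on the two $III^*$ and two $I_2$ fibres, so no further contribution is possible); only then does translation fix the two nodes and $\tau$ acquire fixed points. As stated, your mechanism is self-contradictory even though the conclusion is correct. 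A lesser point: for the remaining special members you claim the whole singular-fibre configuration of \eqref{eq:BP'} is unchanged, which is stronger than needed and not verified; the relevant check is only that the two fibres over the fixed points $t=\pm 8/r$ of $\imath$ stay smooth (or become the $I_4$ already handled), which is how the paper phrases it.
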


\begin{proof}
For the specialisation at $r=-2$, there is an additional singular fiber of type $I_2$ in both elliptic fibrations \eqref{eq:2}, \eqref{eq:BP'}. 
At the same time the former fibration attains a two-torsion section while for the latter fibration, the section $P$ becomes two-divisible.
By construction, the additional $I_2$ fiber is necessarily fixed under the deck transformation above
(i.e.~the base change $f$ ramifies there).
Since the two-torsion section meets the identity component of the additional $I_2$ fiber,
the Enriques involution on $\mX$ does not specialise to a fixed point free involution at $r=-2$.

On all other K3 specialisations, the fixed fibers of the deck transformation are either smooth 
or of type $I_4$ as above.
Hence the Enriques involution stays fixed point free under specialisation.
This proves the first claim.

For the divisor $D$, we can take $D=\varphi(P)=P-O-3F$ as in \eqref{eq:phi(P)}
in all non-degenerate cases.
The same divisor works also at $r=1/4$ as $P$ meets the degenerate $I_4$ fiber at the identity component (and the height is unchanged).

In contrast, at $r=1/2$, the height of $P$ degenerates to $5$
as
$P$ meets the component $\Theta_2$ of the $I_4$ fiber (numbered cyclically).
Thus we still have $P.O=1$, 
but the section $P'=[-P+(0,0)]$ meets the $I_4$ fiber at the identity component
(as opposed to generically intersecting both $I_2$ fibers at the \emph{non}-identity components).
Hence the intersection number $P'.O$ degenerates from $3$ to $2$ at $r=1/2$.
We claim that the following modification of \eqref{eq:phi(P)} suffices to satisfy the conditions of the proposition:
\[
D:=P-O-3F+\Theta_1+\Theta_2.
\]
The intersection number $D^2=-6$ is easily verified.
For the anti-invariance,
we note that $\tau$ rotates the $I_4$ fiber.
Hence
\[
\tau^* D = P' -(0,0) - 3F + \underbrace{\Theta_3 + \Theta_0}_{= F - \Theta_1 - \Theta_2}
\]
To prove that $D+\tau^*D=0$ in $\NS(X)$,
we then only need to verify 
that the sum is orthogonal to the trivial lattice (fiber components and zero section)
and that it gives zero in $\MW(X)$.

With the divisor $D$ at hand,
the final claim of the proposition follows directly from Theorem \ref{thm:Beau}.
\end{proof}

%[-36, -1/2], [-48, 2^2/5^2], [-60, -2^3/7^2], [-72, 1/(3*2^2)], [-84, -1/(2*7)], [-120, 1/(5*2^3)], [-132, -1/(2*5^2)], [-168, 1/(7*2^4)], [-228, -1/(2*13^2)], [-312, 1/(13*2^2*5^2)], [-372, -1/(2*7^2*31)], [-408, 1/(2^2*5^2*7^2)], [-708, -1/(2*5^2*53^2)]

\section{Kummer Surfaces}
\label{s:Km}

Our results about Enriques involutions and Brauer groups so far 
have exclusively concerned K3 surfaces that are generally not Kummer.
Here we want to extend this approach to Kummer surfaces
which turn up very naturally for the Barth--Peters family in the realm of Shioda--Inose structures.

\subsection{Shioda--Inose structure on the Barth--Peters family}
\label{ss:X-Y}

% We start out with  $S_{(1,2)}$, 
% the K3 surface fitting into the Shioda--Inose structure for a $(1,2)$-polarised abelian surface $A$. 
% Hence the specialisations of these surfaces are related.
% In particular specialisations of $S_{(1,2)}$ induce specialisations of $A$ and $\Km(A)$. 
% Here we want to analyze specialisations of $\Km(A)$ and to describe their N\'eron--Severi group. 
% 
% 
% Recall that $S_{(1,2)}$ admits an elliptic fibration with $I_{16}$ fiber and two-torsion section.
% Translation by this section induces the Morrison--Nikulin involution $\jmath$ on $S_{(1,2)}$,
% so that the desingularisation of $S_{(1,2)}/\jmath$ gives  $\Km(A)$.
% 
% With two-torsion section normalised to be $(0,0)$,
% we gave a Weierstrass form for $S_{(1,2)}$ in \eqref{formula:Weierstrass form S_2}.
% The elliptic fibration on $S_{(1,2)}$ induces an elliptic fibration on the quotient $\Km(A)$.
% By standard formulas, a Weierstrass form for this elliptic fibration is given as $\Km(A)$
% \begin{eqnarray}
% \label{eq:WF-Y}
% \Km(A):\;\;\; 
% y^2=x(x^2-2a(t)x+(a^2(t)-4)).
% \end{eqnarray}
% In general, one finds that $\Km(A)$ has one fiber of type $I_8$ and 8 fibers of type $I_2$
% with $\MW$ group $(\Z/2\Z)^2$.
% The two-torsion sections are $(0,0)$ and $ (a\pm 2,0)$.
% Together with the fiber components and the zero section, they clearly generate $\NS(\Km(A))$
% as the discriminants and ranks of both lattices match.
% The quotient by $(0,0)$ leads back to $S_{(1,2)}$.
% 
% 
% 
% \subsection{First specialisation}

Recall that the Barth--Peters family $\mX$ 
admits an elliptic fibration with $I_{16}$ fiber and two-torsion section.
Translation by this section induces a Morrison--Nikulin involution $\jmath$ on $\mX$,
so that the desingularisation of $\mX/\jmath$ gives a family of Kummer surfaces
that we denote by $\mY$.
By standard formulas, we obtained as induced elliptic fibration
\begin{eqnarray}
\label{eq:WF-Y}
\mY:\;\;\; 
y^2=x(x^2-2a(t)x+(a^2(t)-4)).
\end{eqnarray}
% 
% We now specialise to the Barth--Peters family $\mX$.
% As a subfamily of $S_{(1,2)}$, it inherits the Morrison--Nikulin involution $\jmath$.
%Denote the 
%a quotient family by $\mY$.
Generally this has a fiber of type $I_8$ at $\infty$ and 8 fibers of type $I_2$.
There is full two-torsion consisting of the sections $(0,0), (a\pm 2,0)$.
A general member $Y\in\mY$ has transcendental lattice $T(Y)=U(2)+U(4)$.
% \begin{eqnarray}
% \label{eq:WF-Y}
% \Km(A):\;\;\; 
% y^2=x(x^2-2a(t)x+(a^2(t)-4)).
% \end{eqnarray}

\begin{Lemma}
\label{Lem:Y}
Let $Y$ be a member of the family $\mY$ with $\rho(Y)=18$.
Then $\NS(Y)$ is generated by torsion sections,  fiber components and a section of height $1/2$.
In particular $\MWL(Y)=[1/2]$.
Pulling back the infinite section to the quotient $X\in\mX$, we obtain a $\MWL$--generator of $X$.
\end{Lemma}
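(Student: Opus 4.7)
The plan is to extract the Mordell--Weil structure of $Y$ from a Shioda--Tate discriminant count, then transport a generator to $X$ via the $2$-isogeny induced by $\jmath$.

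\textbf{Step 1 (MWL of $Y$).}
On the fibration \eqref{eq:WF-Y}, the trivial lattice is
\[
\Triv(Y)\simeq U + A_7(-1) + 8A_1(-1),
\]
of absolute discriminant $8\cdot 2^8=2^{11}$. The full $2$-torsion $\MW(Y)_{\text{tor}}=(\Z/2\Z)^2$, given by $(0,0)$ and $(a\pm 2,0)$, embeds $\Triv(Y)$ into $\NS(Y)$ with index $4$. Since $\rho(Y)=18$ and $T(Y)=U(2)+U(4)$ has discriminant $2^6$, the Shioda--Tate formula
\[
|\disc\NS(Y)|\cdot|\MW(Y)_{\text{tor}}|^2 \;=\; |\disc\Triv(Y)|\cdot\disc\MWL(Y)
\]
forces $\disc\MWL(Y)=2^6\cdot 16/2^{11}=1/2$. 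As $\text{rank}\,\MWL(Y)=\rho(Y)-\text{rank}\,\Triv(Y)=1$, we conclude $\MWL(Y)\cong[1/2]$, and $\NS(Y)$ is generated by $\Triv(Y)$, the torsion sections, and any MWL generator $P_Y$ of height $1/2$.

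\textbf{Step 2 (Lift to $X$).}
The $2$-isogeny $\pi\colon X\to Y$ induced by $\jmath$ carries the $I_{16}$ at $\infty$ on $X$ onto the $I_8$ on $Y$, while ramifying over the eight $I_2$ fibers of $Y$. Choose a lift $\tilde P$ of $P_Y$, that is, a section of $X$ with $\pi_*\tilde P=P_Y$ (unique up to translation by the $2$-torsion section of $X$). The analogous Shioda--Tate bookkeeping on $X$ uses trivial lattice $U+A_{15}(-1)$ of discriminant $16$, $\MW(X)_{\text{tor}}=\Z/2\Z$, and $|\disc\NS(X)|=4$, the latter coming from $T(X)=U+U(2)$ via the Shioda--Inose identity $T(Y)=T(X)(2)$. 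This forces $\MWL(X)\cong[1]$, so every non-torsion section of $X$ has positive integer height, and generators are exactly the sections of height $1$. The canonical height transfer under the $2$-isogeny then gives $h_X(\tilde P)=2\,h_Y(P_Y)=1$, so $\tilde P$ generates $\MWL(X)$.

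\textbf{Main obstacle.}
The discriminant bookkeeping on both surfaces is routine. The delicate step is the height identity $h_X(\tilde P)=2\,h_Y(P_Y)$. It follows from the general push-pull formalism $\pi_*\circ\hat\pi_*=[2]_Y$ for isogenous elliptic surfaces (with the dual isogeny $\hat\pi$ scaling MWL heights by the isogeny degree), or it can be verified directly by matching local correction terms: $\tilde P$ meets the component of the $I_{16}$ on $X$ sitting above the component of $I_8$ hit by $P_Y$, while the smooth $X$-fibers above the ramification points of $\pi$ contribute nothing. Once $h_X(\tilde P)=1$ is established, the fact that $\MWL(X)\cong[1]$ immediately identifies $\tilde P$ as a generator.
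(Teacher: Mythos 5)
Your Step 1 reaches the right conclusion but rests on an unproved assumption, and your Step 2 is set up in the wrong direction. In Step 1 you feed $|\MW(Y)_{\mathrm{tor}}|=4$ into the Shioda--Tate discriminant formula; the Weierstrass form \eqref{eq:WF-Y} only exhibits the full $2$-torsion, it does not by itself exclude larger torsion (a $4$-torsion section is not ruled out by the naive height count on an $I_8+8I_2$ configuration), and with larger torsion your formula would return $\MWL(Y)=[2]$ instead of $[1/2]$. The paper avoids this circularity by producing the height-$1/2$ section geometrically: $Y$ is a quadratic base change of a rational elliptic surface with fibers $I_4+4I_2$, whose Mordell--Weil generator has height $1/4$ and pulls back to a section of height $1/2$; the sublattice generated by fiber components, the visible $2$-torsion and this section then has discriminant $64=|\disc T(Y)|$, so it is all of $\NS(Y)$ --- which pins down the torsion and $\MWL(Y)$ simultaneously rather than assuming them.

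The more serious problem is Step 2. You define $\tilde P$ as a section of $X$ with $\pi_*\tilde P=P_Y$, but no such section exists: for the degree-$2$ quotient isogeny $\pi$ induced by $\jmath$ the canonical height satisfies $h_Y(\pi_*(S))=2\,h_X(S)$, so a $\pi_*$-preimage of $P_Y$ would have height $1/4$ on $X$, impossible in $\MWL(X)\cong[1]$ (so $P_Y$ simply does not lie in $\pi_*\MW(X)$). The formula you then invoke, $h_X(\tilde P)=2\,h_Y(P_Y)$, is the transformation law for the \emph{dual} isogeny, i.e.\ for the pull-back $\hat\pi_*(P_Y)$, which is what the lemma (and the paper) actually uses: pulling $P_Y$ back to $X$ gives a section of height $2\cdot\tfrac12=1$, and since $\MWL(X)\cong[1]$ has all norms of the form $n^2$, a height-$1$ element is a generator. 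So the intended argument is sound and close to the paper's ("the final claim follows directly from the heights"), but as written your construction of $\tilde P$ is inconsistent with your own height identity; replace the $\pi_*$-lift by the dual-isogeny image (equivalently, the corrected pull-back divisor) and Step 2 goes through.
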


\begin{proof}
By construction, $Y$ is a base change of a rational elliptic surface.
Namely this property carries over directly from $\mX$ as $a(t)$ is in fact quadratic in $t$.
Due to the singular fibers ($I_4$ and four times $I_2$),
the rational elliptic surface has Mordell-Weil rank one.
The generator is a section of height $1/4$,
so the pull-back has height $1/2$ on $Y$.
Comparing discriminants, we verify that 
this infinite section together with torsion sections and fiber components generates $\NS(Y)$.
The final claim follows directly from the heights.
%Torsion sections and fiber components generate a sublattice of rank $17$ and discriminant $128$.
%Moreover a generator of $\MWL(X)$ induces a section $P$ of height $2$ on $Y$ by \cite[Prop.~8.12]{ShMW}.
%Thus we obtain a sublattice $L$ of $\NS(Y)$ of rank $18$ and discriminant $-256$.
%Since $T(Y)$ has discriminant $-64$, 
%$L$ has index two in $\NS(X)$.
%Hence $P$ plus a suitable two-torsion section is two-divisible in $\MW(Y)$.
%In fact, in the present situation the two-divisible section is $P$ itself.
%This follows since $P$ only meets the $I_8$ at some non-identity component (the one opposite the identity component)
%while upon adding any two-torsion section some $I_2$ fiber is met non-trivially
%which rules out two-divisibility.
%Thus we derive the claimed section on $Y$ that induces a $\MW$--generator on $X$.
\end{proof}
% 
% \begin{rem}
% In terms of lattice enhancements as in  \ref{construction: specialisation},
% the surface $Y$ is obtained from the family of the Kummer surfaces $\Km(A)$ of a $(1,2)$--polarised Abelian surface $A$ 
% by choosing the vector $v$ to be $v_Y=(1,-2,0,0,0)\in \langle -8\rangle+ U(2)+ U(2)\simeq T(\Km(A))$.
% The N\'eron--Severi lattice of $Y$ is an overlattice of index $4$ of $\NS(\Km(A))+\langle v\rangle_Y$.
% %The surface $Y$ admits an elliptic fibration which has $I_8+8I_2$ as singular fibers and $(\Z/2\Z)^2\times \Z$ as Mordell--Weil group. The Mordell--Weil lattice is $[1/2]$.
% \end{rem}

\subsection{Involutions}
\label{ss:Y-inv}

Since the family $\mY$ is a base change of a family of rational elliptic surfaces,
%Namely this property carries over directly from $\mX$ as $a(t)$ is in fact quadratic in $t$.
we have in addition to the translations by two-torsion sections and the hyperelliptic involution
the deck transformation $\imath$ acting as a non-symplectic involution on $\mY$.
As in \cite{HS} this allows us to derive Enriques involutions on $\mY$ 
(outside some subfamily of codimension one).
%Namely for a general member $Y\in\mY$, $\MW(Y)$ is fully obtained via pull-back from the underlying rational elliptic surface.
Since the two-torsion sections are both invariant and anti-invariant for $\imath^*$,
the composition of their translation with the deck transformation $\imath$ defines an involution $\tau$ on $\mY$.

\begin{Lemma}
\label{Lem:Y-tau}
\label{lem1}
Generally the involution $\tau$ is an Enriques involution
if and only if the two-torsion section involved is not $(0,0)$.
\end{Lemma}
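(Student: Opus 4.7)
The plan is to verify the two ingredients of the Enriques property for $\tau = t_P\circ \imath$: being an involution and acting without fixed points.

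For step one, observe that since $a(t)=a(-t)$, each of the three non-zero $2$-torsion sections $(0,0)$, $(a(t)-2,0)$, $(a(t)+2,0)$ of $\mY$ is invariant under the deck transformation $\imath$. Hence $\imath$ commutes with $t_P$, and
\[
\tau^2 = t_P\circ \imath\circ t_P\circ \imath = t_{2P}\circ \imath^2 = \mathrm{id}
\]
because $P$ is $2$-torsion. The involution $\tau$ is non-symplectic since $\imath$ is non-symplectic (as deck transformation of the nontrivial quadratic base change underlying $\mY$) while $t_P$ is symplectic.

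For step two, because $\imath$ sends $t$ to $-t$ on the base, any fixed point of $\tau$ must lie in a fiber over $t=0$ or $t=\infty$. For a generic $Y\in\mY$ the fiber over $t=0$ is smooth (the condition $a(0)\neq\pm 2$ is generic) and in the chosen lift $(x,y,t)\mapsto(x,y,-t)$ the involution $\imath$ restricts to the identity there. Hence $\tau$ restricts on this fiber to $t_P$, which is fixed-point free because $P$ restricts to a non-zero $2$-torsion point of the smooth elliptic curve.

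Step three is the crux: analyse the $I_8$ fiber over $t=\infty$. Here I would apply Tate's algorithm to the Weierstrass model \eqref{eq:WF-Y} at the parameter $s=1/t=0$ to determine both (i) how $\imath$ permutes the eight components $\bar{C}_0,\ldots,\bar{C}_7$ of the $I_8$ and how it acts on each stable component, and (ii) which component each $2$-torsion section meets. The expected outcome is that the section $(0,0)$ meets an $\imath$-stable component on which $\tau$ restricts to a nontrivial involution of a $\mathbb{P}^1$, automatically producing two fixed points; by contrast the sections $(a\pm 2,0)$ meet components that $\tau$ carries to genuinely different components, and a direct check shows that the resulting permutation of the cycle has no fixed element, hence no fixed point.

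The main obstacle is this component-by-component analysis at infinity, because the Weierstrass model is far from minimal at $s=0$ and the resolution of the $I_8$ requires several blow-ups before the eight components become visible. A cleaner alternative that I would carry out in parallel is to transport the question through the $2$-isogeny $\mX\to\mY$ whose kernel is generated by $\jmath = t_{(0,0)_{\mX}}$: the involution $\tau$ on $\mY$ lifts to $t_{\widetilde P}\circ \imath_\mX$ on $\mX$ with $\widetilde P\in\mX$ a preimage of $P$, and the analysis on the $I_{16}$ fiber of $\mX$, where the classical Barth--Peters description of the Enriques involution $\tau_\mX=\jmath\circ\imath_\mX$ from \cite{BP, HS} makes the component actions explicit, allows one to match the good preimages (coming from $(a\pm 2,0)_\mY$) with $\tau_\mX$ while isolating $(0,0)_\mY$ as the exceptional case.
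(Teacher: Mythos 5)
Your argument is essentially the paper's proof: fixed points of $\tau$ are confined to the fibres over $t=0,\infty$, the generically smooth fibre at $t=0$ is handled by translation by a nonzero two-torsion point, and the decision at the $I_8$ fibre reduces to whether the two-torsion section meets the same component as the zero section, which is exactly how the paper argues. Your ``expected outcome'' is correct and the deferred computation is not the obstacle you fear: after the standard rescaling the model is minimal at $t=\infty$, the section $(0,0)$ misses the node of the Weierstrass fibre there (hence meets the identity component $\Theta_0$, so $\tau$ stabilises a component of the $I_8$ and has fixed points), while $(a\pm 2,0)$ pass through the node (hence meet a non-identity component, in fact $\Theta_4$ as in the paper's later set-up table), and $\imath$ fixes the strict-transform components pointwise and stabilises the exceptional ones, so $\tau$ rotates the $8$-cycle nontrivially and is free on it.
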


\begin{proof}
We start by studying the fixed locus of the deck transformation $\imath$.
Here $\imath$ fixes the fiber of type $I_8$ at $\tau=\infty$ and 
the fiber at $\tau=0$.
The latter fiber is smooth  outside a subfamily of codimension one.
Since two-torsion sections are always disjoint from the zero section,
the composition is fixed point free if and only if the two-torsion section meets a different component of the 
$I_8$ fiber than the zero section.
This exactly rules out the given section.
\end{proof}

In summary, outside a subfamily of codimension one,
Lemma \ref{Lem:Y-tau} gives two Enriques involutions on $\mY$.

\subsection{Specialisations}

We continue this approach by specialising the family $\mX$ to the subfamily $\mX_N$ 
comprising the surfaces $X_N$ from \ref{ss:X_N} for some fixed $N\in\N$.

As before the Morrison--Nikulin involution $\jmath$ exhibits a Shioda--Inose structure.
This time it relates to abelian surfaces $A$ with $T(A)=U(2)+\langle2N\rangle$.
Thus the desingularisation $Y_N$ of $X_N/\jmath$ coincides with the Kummer surface $\Km(A)$ with
$T(Y_N) = U(4) +\langle4N\rangle$.
%These Kummer surfaces sit inside a family that we denote by $\mY_N$.

\begin{Lemma}
\label{lem17}
Let $Y_N$ be as above with $\rho(Y)=19$.
\begin{enumerate}
\item
If $N>1$, then the induced elliptic fibration on $Y_N$ has non-degenerate singular fibers and 
$\MW(Y_N)=(\Z/2\Z)^2\times \Z^2$ with $\MWL(Y_N)\simeq \left[\begin{array}{ll}1/2&0\\0&N\end{array}\right]$.
\item
If $N=1$, then the induced elliptic fibration on $Y_N$ has singular fibers
$I_8+I_4+6I_2$ and $\MW(Y_N)=(\Z/2\Z)^2\times \Z$ with $\MWL(Y_N)=[1/2]$.
\end{enumerate}
%Then $\NS(Y_N)$ is generated by torsion sections,  fiber components and a section of height $1/2$.
%In particular $\MWL(Y)=[1/2]$.
%Pulling back the infinite section to the quotient $X\in\mX$, we obtain a $\MWL$--generator of $X$.
\end{Lemma}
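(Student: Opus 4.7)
The plan is to specialise the structure established in Lemma~\ref{Lem:Y} for the generic $\mY$ to the subfamily $\mY_N$, using the specialisation of $X_N$ described in Proposition~\ref{prop: sp X to XN}. I would carry this out in three steps: identification of singular fibers, determination of torsion and rank, and pinning down the Mordell--Weil lattice by a discriminant match combined with explicit generators.

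First, I would determine the singular fibers. Generically the Morrison--Nikulin quotient $\jmath$ takes $I_{16}+8I_1$ on $\mX$ to $I_8+8I_2$ on $\mY$, as used in Lemma~\ref{Lem:Y}. Applying the same transformation to the fibers on $X_N$ given by Proposition~\ref{prop: sp X to XN}, for $N>1$ the fibers $I_{16}+8I_1$ become $I_8+8I_2$ on $Y_N$. For $N=1$, the additional $I_2$ on $X_1$ sits over a ramification point of the quadratic base change underlying the construction of \ref{ss:X-Y}, and is preserved by the deck involution; under the induced quotient it transforms into $I_4$, yielding $I_8+I_4+6I_2$ on $Y_1$. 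The three two-torsion sections $(0,0),(a\pm 2,0)$ of the generic model \eqref{eq:WF-Y} persist under specialisation, so $(\Z/2\Z)^2\subseteq \MW(Y_N)_{\mathrm{tor}}$. By the Shioda--Tate formula, the free rank of $\MW(Y_N)$ equals $\rho(Y_N)$ minus the rank of the trivial lattice, which is $19-17=2$ for $N>1$ and $19-18=1$ for $N=1$.

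Next, I would pin down $\MWL(Y_N)$. In both cases the trivial lattice has $|\disc|=2^{11}$; dividing by the index-$4$ contribution of the full two-torsion gives $128$ as the discriminant of its primitive closure in $\NS(Y_N)$. Since $T(Y_N)=U(4)\oplus\langle 4N\rangle$ has $|\disc|=64N$ by \ref{ss:X-Y}, matching discriminants forces $|\disc \MWL(Y_N)|=N/2$. For $N>1$, the height-$1/2$ section from Lemma~\ref{Lem:Y} specialises to $Y_N$, and the section $u$ of height $2N$ on $X_N$ (Proposition~\ref{prop: sp X to XN}) descends via $\jmath$ to a section $\bar u$ on $Y_N$ of height $N$ (halved by the degree-$2$ isogeny, as in the generic case $Q\mapsto$ height-$1/2$ section). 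The Gram matrix
\[
\begin{pmatrix} 1/2 & \alpha \\ \alpha & N \end{pmatrix}
\]
must have determinant $N/2$, forcing $\alpha=0$ and yielding the claimed diagonal form. For $N=1$ the matching discriminant $1/2$ together with the height-$1/2$ generator from generic $\mY$ determines $\MWL(Y_1)=[1/2]$. No further torsion can occur, since the primitive closure of the trivial lattice in $\NS(Y_N)$ is already accounted for by $(\Z/2\Z)^2$ (the two-torsion meets the reducible fibers in the predicted components).

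The main obstacle will be the precise verification of the singular fiber transformation under $\jmath$, particularly the $I_2\mapsto I_4$ merging in the $N=1$ case. This can be resolved either by specialising the explicit Weierstrass model \eqref{eq:WF-Y} at the parameters corresponding to $X_1$ from Proposition~\ref{prop: sp X to XN} and invoking Tate's algorithm directly, or by noting that the extra $I_2$ on $X_1$ sits at a fixed point of the underlying quadratic base change so that its two components are exchanged by $\jmath^*$ up to torsion, forcing a length-$4$ cycle in the quotient. A secondary technical point is confirming that $\bar u$ indeed has height exactly $N$, which follows from the Mordell--Weil pull-back formula $\pi^*\bar u=u+\jmath u$ combined with $u\cdot\jmath u=0$ (the same orthogonality used in the generic case to recover height $1/2$ from height $1$).
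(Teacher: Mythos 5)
Your skeleton---quotient fiber types, Shioda--Tate for the rank, persistence of the full two-torsion, and the discriminant count forcing $|\disc \MWL(Y_N)|=N/2$---agrees with the paper's strategy, but there is a genuine gap at the decisive step. You assert that the height-$2N$ section $u$ on $X_N$ ``descends via $\jmath$ to a section $\bar u$ of height $N$ (halved by the degree-$2$ isogeny)''. Heights are not halved by an isogeny: pushing a section forward along the degree-two quotient multiplies its height by $2$, so the image of $u$ has height $4N$. (Likewise, in the generic case the height-$1$ generator $Q$ of $\MWL(X)$ maps to a section of height $2$ on $Y$, which happens to be twice the height-$1/2$ generator; that height-$1/2$ section is produced by pull-back from the rational elliptic surface under the quadratic base change, not by ``halving'' under the isogeny.) The existence of a section of height exactly $N$ on $Y_N$---equivalently the two-divisibility of the image of $u$ in $\MW(Y_N)$---is precisely the nontrivial content of the lemma, and your Gram-determinant step does not supply it: the sublattice you actually control is spanned by the specialised height-$1/2$ section and the height-$4N$ image of $u$, of discriminant $2N$, and it has several numerically admissible index-two overlattices of discriminant $N/2$. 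Your fallback ``$\pi^*\bar u=u+\jmath u$ with $u\cdot\jmath u=0$'' does not repair this: $\jmath u$ equals $u$ plus a two-torsion section, so $\langle u,\jmath u\rangle=2N$ in the Mordell--Weil lattice. The paper closes exactly this gap by noting that twice any section of $Y_N$ lies in the image of $\MW(X_N)$ (the isogeny composed with its dual is multiplication by $2$), so the index-four discrepancy detected by the discriminant comparison can only be absorbed by halving, whence $\MWL(Y_N)=\MWL(X_N)[1/2]$; you need this divisibility argument (or an explicit construction of the height-$N$ section), and it would also take care of your merely asserted claim that no extra torsion appears.

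A smaller point: your mechanism for $I_2\mapsto I_4$ in the case $N=1$ is off. The Morrison--Nikulin involution is translation by the two-torsion section, not the deck transformation of the base change; if it exchanged the two components of the extra $I_2$, the quotient fiber would be $I_1$, not $I_4$. The correct reason is that $(0,0)$ passes through the identity component of that $I_2$, which one can confirm by the Euler-number count $8+4+6\cdot 2=24$ or, as you suggest, by Tate's algorithm on the specialised Weierstrass model.
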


\begin{proof}
We will only treat the case $N>1$.
The case $N=1$ is proved analogously.

Pull--back from $X_N$ induces a sublattice $\MWL(X_N)[2]\simeq \left[\begin{array}{ll}2&0\\0&4N\end{array}\right]$ of $\MWL(Y_N)$.
Together with torsion sections and fiber components,
this generates a sublattice of $\NS(Y_N)$ of rank $19$ and discriminant $2^{10}N$.
Since $T(Y_N)$ has discriminant $64N$, 
the index is four and has to be accounted for completely by $\MWL(Y_N)$.
But there we can only have two-divisibility due to the $2$-isogeny between $Y_N$ and $X_N$.
Hence $\MWL(Y_N) = \MWL(X_N)[1/2]$.
\end{proof}

%\begin{rem}
%The two-divisible sections in $\MW(Y_N)$ are exactly the pull-backs from $X$.
%This follows as in the proof of Lemma \ref{Lem:Y} 
%since adding a two-torsion section results in non-trivial intersection with some $I_2$ fibers.
%\end{rem}

\subsection{Enriques involutions}

As in \ref{ss:Y-inv},
the deck transformation $\imath$ composed with  translation by either two-torsion section defines an
involution $\tau$ on $Y_N$.
In the following we refer to the Weierstrass form \eqref{eq:WF-Y} specialised to $Y_N$ for the natural elliptic fibration.

\begin{lemma}
\label{lem2}
The involution $\tau$ on $Y_N$ fails to be an Enriques involution
exactly in the following two cases:
\begin{enumerate}
\item
the two-torsion section defining $\tau$ is $(0,0)$;
\item
$N=1$ and the two-torsion section defining $\tau$ is $(a\pm 2,0)$ for the sign such that $t\nmid (a\pm 2)$.
\end{enumerate}
\end{lemma}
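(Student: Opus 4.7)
My plan is to specialise the argument of Lemma \ref{lem1} to the family $Y_N$, tracking carefully how the $\imath$-fixed fibers change under the degeneration that occurs at $N=1$. The involution $\imath$ is the deck transformation of a quadratic base change, so it acts on the base $\mathbb{P}^1$ as an involution with two fixed points, which I normalise to $t=0$ and $t=\infty$. A standard local computation shows that $\imath$ restricts to the identity on each $\imath$-fixed fiber, so $\tau=t_T\circ\imath$ acts on such a fiber $F$ simply as translation by $T|_F$. This translation is fixed-point free exactly when $T$ does not meet $F$ at the identity component: on smooth fibers this is automatic because $T\neq O$, while on reducible fibers the condition becomes a condition on the component picked up by $T$. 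Hence the proof reduces to determining, for each $\imath$-fixed reducible fiber, which of the three non-trivial 2-torsion sections meet the identity component.

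For the $I_8$ fiber at $\infty$ (present for every $N$ by Lemma \ref{lem17}), I would pass to coordinates $X=x/t^4$, $Y=y/t^6$ in which the limiting Weierstrass cubic becomes $Y^2=X(X-1)^2$; the three two-torsion sections $(0,0),\ (a+2,0),\ (a-2,0)$ then restrict to $X$-values $0,\ 1,\ 1$ respectively. So $(0,0)$ alone meets the smooth root of the cubic, i.e.\ the identity component of the resolved $I_8$, whereas both $(a\pm2,0)$ pass through the node and hence meet non-identity components. For $N>1$ the fiber at $t=0$ is smooth (Lemma \ref{lem17}), the analysis terminates, and we recover precisely the statement of Lemma \ref{lem1}. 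For $N=1$, however, two of the original $I_2$ fibers collide at the ramification point $t=0$ to produce an $I_4$ (again by Lemma \ref{lem17}); the degeneration forces $a(0)=\pm2$, and the Weierstrass cubic at $t=0$ reads $x^2\bigl(x-(a(0)\mp2)\bigr)$. The section $(a\pm2,0)$ with $t\nmid(a\pm2)$ has $x$-value $a(0)\pm2\neq 0$ at $t=0$ and so meets the smooth root, i.e.\ the identity component of $I_4$; the other two 2-torsions $(0,0)$ and $(a\mp2,0)$ both pass through the node.

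Putting the two analyses together, $\tau$ fails to be an Enriques involution exactly in the enumerated cases. The delicate point I anticipate is the final bookkeeping for $N=1$: one must verify that the two node-passing sections meet a \emph{non}-identity component of the resolved $I_4$, rather than somehow still meeting the identity component. I plan to extract this from the Néron component group $\mathbb{Z}/4\mathbb{Z}$ of $I_4$, in which the 2-torsion sections map to the subgroup $\{0,2\}$ by a group homomorphism. Since the distinguished section $(a\pm2,0)$ with $t\nmid(a\pm2)$ has component image $0$, the other two (whose sum in $\MW$ equals this distinguished one) are forced to share a common image in $\{0,2\}$; the fact that both pass through the node rules out $0$, so the common image must be $2$, neither meets the identity component, and the corresponding involution defined by $(a\pm2,0)$ with $t\mid(a\pm2)$ is Enriques, as claimed.
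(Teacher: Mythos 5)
Your proof is correct and takes essentially the same approach as the paper: both reduce to the fibers fixed by the deck transformation $\imath$ at $t=0,\infty$ and to the criterion that $\tau$ is fixed point free there exactly when the two-torsion section meets a non-identity component, which you verify by explicit limiting Weierstrass computations (plus the component-group argument for the $I_4$), where the paper simply reads this off from its intersection data. The only blemish is a sign slip in the displayed degenerate cubic at $t=0$: with $a(0)=\pm 2$ it reads $x^2\bigl(x-(a(0)\pm 2)\bigr)$, not $x^2\bigl(x-(a(0)\mp 2)\bigr)$; the $x$-values you subsequently use are the correct ones, so the conclusion is unaffected.
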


\begin{proof}
The argument from Lemma \ref{Lem:Y-tau} rules out the first alternative.
Recall that the deck transformation $\imath$ fixes the fiber at $t=0$.
If this fiber is smooth,
then the same argument shows that $\tau$ has no fixed points for the two-torsion sections $(a\pm 2,0)$.
Presently,
this fiber is singular (type $I_4$) exactly in the degenerate case $N=1$.
Then in order to induce a fixed point free action on the fiber,
the two-torsion section has to meet a non-identity component of the $I_4$ fiber.
This is exactly the case $t\mid(a\pm 2)$.
\end{proof}

\subsection{Brauer group}
Eventually we want to compute how the Brauer group pulls back from the Enriques quotients of $Y$ and $Y_N$.
Here's the result:

\begin{Theorem}
\label{thm:Br}
Consider the K3 surfaces $Y$ and $Y_N$ with Enriques involution $\tau$
as  specified in Lemma \ref{lem1}, \ref{lem2}. Then
\begin{enumerate}
\item
If $N$ is even, then $\pi^* \Br(Y_N/\tau) = \Z/2\Z$.
The same holds true for $Y$.
\item
If $N$ is odd, then $\pi^* \Br(Y_N/\tau) = \{0\}$.
\end{enumerate}
\end{Theorem}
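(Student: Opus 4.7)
\emph{Proof plan.} The strategy is to apply Beauville's criterion (Theorem \ref{thm:Beau}) and reduce to a parity question on the anti-invariant part of the N\'eron--Severi lattice, closely mirroring the abstract proof of Theorem \ref{thm} given in \ref{prop:br} and its geometric counterpart in Section \ref{s:geom}. Concretely, one must exhibit an anti-invariant divisor $D\in\NS(Y_N)$ with $D^2 \equiv 2 \pmod 4$ when $N$ is odd, and show that no such $D$ exists when $N$ is even; the argument for generic $Y\in\mY$ runs along the same lines.

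I would first compute the orthogonal complement of the Enriques lattice $U(2)+E_8(-2)$ inside $\NS(Y_N)$. Combining Lemma \ref{lem17} (which determines $\MWL(Y_N)$ together with the full two-torsion and the singular fibres) with Lemma \ref{lem2} (which describes $\tau$ as the composition of the deck transformation $\imath$ with translation by a two-torsion section), one traces the action of $\tau^*$ on the singular fibres, the torsion sections and the Mordell--Weil generators. The resulting anti-invariant sublattice should, in parallel with \eqref{eq:orto}, take the shape $M_0 + \langle -2N\rangle$, where $M_0$ is a fixed rank-$8$ negative-definite lattice obtained from the analogous calculation on the generic Kummer surface $Y\in\mY$, and the $\langle -2N\rangle$ summand is spanned by an integral divisor built from the Mordell--Weil generator of height $N$, corrected by trivial lattice classes and two-torsion. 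The crucial auxiliary claim is that $M_0$ represents only values in $4\Z$.

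Granting this, the theorem follows at once. For generic $Y$, and for $Y_N$ with $N$ even, every anti-invariant class has self-intersection in $4\Z$, so Beauville's condition fails and $\pi^*\Br = \Z/2\Z$. For $Y_N$ with $N$ odd, the explicit integral generator $D$ of the $\langle-2N\rangle$ summand satisfies $D^2 = -2N \equiv 2 \pmod 4$, giving $\pi^*\Br(Y_N/\tau) = \{0\}$.

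The main obstacle I foresee is identifying $M_0$ and establishing its $4$-divisibility, without the notational shortcut available in the $X_N$ setting (where the anti-invariant part was simply $E_8(-2)+\langle-2N\rangle$). On the Kummer side the lattice is less transparent because the Morrison--Nikulin isogeny $X_N\to Y_N$ of Section \ref{ss:X-Y} rescales bilinear forms on certain sublattices by a factor of $2$; correspondingly, one must carefully distinguish which anti-invariant classes on $Y_N$ are pull-backs from $X_N$ (and hence pick up a factor of $2$ in self-intersection, automatically landing in $4\Z$) and which are genuinely new integral classes supplied by Lemma \ref{lem17}. A discriminant-form computation of the type enabled by Proposition \ref{prop: uniquely of a lattice in primitve lattice, condition on discriminant} should settle this and pin down $M_0$ uniquely from its invariants.
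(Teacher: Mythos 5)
Your overall strategy (Beauville's criterion, Theorem \ref{thm:Beau}, plus a computation of the anti-invariant lattice $\NS(Y_N)^{\tau^*=-1}$) is indeed the paper's strategy, but the structural ansatz you build it on is wrong, and it hides exactly the point where the real work lies. You posit that the anti-invariant lattice has the uniform shape $M_0+\langle -2N\rangle$, with the $\langle -2N\rangle$ summand generated by an integral divisor obtained from the Mordell--Weil generator $Q$ of height $N$ corrected by trivial-lattice classes and torsion. On $Y_N$ the class that is automatically anti-invariant is not such a divisor but $\varphi([2Q])=v_N$, of square $-4N$ (the enhancement vector inside $T(Y)=U(2)+U(4)$); integral representatives built from $Q$ itself have squares such as $-N-3$ or $-N-5$, not $-2N$. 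So what one gets for free is only an inclusion $E_8(-2)+\langle -4N\rangle \hookrightarrow \NS(Y_N)^{\tau^*=-1}$, where $E_8(-2)=\NS(Y)^{\tau^*=-1}$ is your $M_0$ (its $4$-divisibility is then immediate), and the entire problem is whether this inclusion is an equality or has index two, i.e.\ whether $Q$ itself, and not merely $[2Q]$, contributes an anti-invariant class. The paper settles this by parity (Lemma \ref{lem:div}): for $N$ even a direct check in an explicit $\Z$-basis shows equality, so every anti-invariant square lies in $4\Z$; for odd $N>1$ one exhibits explicit anti-invariant divisors $D_1=Q+(1)-O-\tfrac{N+1}{2}F$ and $D_2=Q-(4)-V-\tfrac{N-1}{2}F$ of squares $-N-3$ and $-N-5$, one of which is $\equiv 2 \bmod 4$. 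Your ansatz also cannot be rescued by the discriminant-form argument you propose: with $M_0\cong E_8(-2)$ the lattice $M_0+\langle -2N\rangle$ has discriminant $2^9N$, whereas the actual anti-invariant lattice has discriminant $2^{10}N$ for $N$ even and $2^{8}N$ for $N$ odd, so the shape you assume matches neither case.

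A second, smaller gap: the theorem includes $N=1$ (through Lemma \ref{lem2}(2)), where the fibration degenerates to $I_8+I_4+6I_2$ and there is no Mordell--Weil generator of height $N$ (Lemma \ref{lem17}(2) gives $\MWL(Y_1)=[1/2]$), so your plan does not apply there. The paper treats this case separately by writing down an explicit $\tau^*$-anti-invariant divisor of square $-6$ supported on fiber components and torsion sections, and you would need an analogous ad hoc argument.
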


In particular, the above theorem implies Theorem \ref{thm2}. % with the K3 surfaces $Y_N$ for odd $N$;  
Note that $Y_N$ admits a rational map of degree four to the Kummer surface of $E\times E'$;
this is the composition of the two $2$-isogenies $Y_N\dasharrow X_N$ and $X_N\dasharrow \Km(E\times E')$ that we have exhibited before.

The proof of Theorem \ref{thm:Br} will be given below.
First we set up some notation regarding the given elliptic fibrations on $Y$ and $Y_N$.

\subsection{Set-up}\label{ss: setup}

On $Y$, we number the components of the $I_8$ fiber cyclically $\Theta_0,\hdots,\Theta_7$ so that $\Theta_0$ meets the zero section $O$.
We number the $I_2$ fibers from $1$ to $8$.
In the following, we refer to their non-identity components simply by the respective number.
Then we have the sections $U=(0,0)$ and $V, W=(a\pm 2,0)$.
Moreover there is a section $P$ of height $1/2$ obtained from the quotient rational elliptic surface $Y/\imath$ by pull-back.
Up to rearranging the fibers (and adding a two-torsion section to $P$),
we can assume the following intersection pattern where we only indicate the $I_2$ fibers met at non-identity components:

$$
\begin{array}{cccccccccccc}
& I_8 && I_2\text{'s:} & 1 & 2 & 3 & 4 & 5 & 6 & 7 & 8\\
% mine   12345678
% yours  34256789
\hline
U & \Theta_0 &&& 1 & 2 & 3 & 4 & 5 & 6 & 7 & 8\\ % b
V & \Theta_4 &&&1 & 2 & 3 & 4 & &&&\\ % a
W & \Theta_4&&&&&&&5 & 6 & 7 & 8\\ % a+b
P & \Theta_2&&& 1 & 2&&& 5 & 6&&  % z
\end{array}
$$
From the Mordell-Weil pairing, it follows that all these sections are orthogonal on $Y$.
The deck transformation $\imath$ acts trivially on the sections of $Y$ and on the $I_8$ fiber while permuting the $I_2$ fibers as $(12)(34)(56)(78)$.

A $\Z$-basis of $\NS(Y)$ can be obtained by omitting $W$ and the non-identity components $3, 8$, say.
There are 18 divisors remaining:
13 non-identity components and $\Theta_0$ as well as the four sections $O,U,V,P$.
%With a computer program, one easily computes that they generate a lattice of rank $18$ and discriminant $-64$.
The Gram matrix comprising their intersection numbers
has full rank and determinant $-64$.
Thus the specified divisors form a $\Z$-basis of $\NS(Y)$  as claimed.

In terms of this $\Z$-basis, it is easy to implement the Enriques involutions $\tau$ from Lemma \ref{lem1}.
For each Enriques involution, one finds that 
\begin{eqnarray}
\label{eq:Y-anti}
\NS(Y)^{\imath^*=-1} \cong E_8(-2).
\end{eqnarray}
In particular, all anti-invariant divisors $D$ have $D^2\equiv 0\mod 4$.
Hence $\pi^*\Br(Y/\tau)=\Z/2\Z$ by Theorem \ref{thm:Beau}.
This proves Theorem \ref{thm:Br} for $Y$.
We now turn to the subfamilies $Y_N$.

\subsection{Lattice enhancement}
In this and the next section, we assume $N>1$ 
(see \ref{ss:N=1} for the case $N=1$).
By Lemma \ref{lem17} the subfamilies $Y_N$ attain an additional section $Q$ of height $N$.
We determine the fiber components met by $Q$ through the lattice enhancement construction \ref{constr}.

\begin{Lemma}
Up to renumbering, $Y_N$ has a section $Q$ of height $N>1$ that only meets the following singular fibers non-trivially:
$$
\begin{array}{lcccccccc}
N \text{odd} & 1 & 2 &&& 5 & 6 & 7 & 8\\
N \text{even} & 1 &&& 4 && 6 & 7 &
\end{array}
$$
\end{Lemma}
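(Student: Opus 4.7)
The plan is to combine the Shioda height formula for $Q$ with Mordell--Weil orthogonality to the height-$1/2$ section $P$, and to exploit the freedom to translate by two-torsion sections together with the freedom to renumber the eight $I_2$ fibers compatibly with the structure of $U$, $V$, $W$, $P$ fixed in \ref{ss: setup}.

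By Lemma \ref{lem17} we take $Q$ to generate the orthogonal complement of $\Z P$ in the free part of $\MW(Y_N)$, so that $h(Q)=N$ and $\langle P,Q\rangle=0$. Write $Q$ as meeting $\Theta_j$ in the $I_8$ fiber and the non-identity components of the $I_2$ fibers indexed by $S\subset\{1,\dots,8\}$. The Shioda height formula reads
\[
N\,=\,4+2(Q\cdot O)-\tfrac{j(8-j)}{8}-\tfrac{|S|}{2}.
\]
Integrality of the right-hand side already forces $j$ to be even. Since translation by either of the two-torsion sections $V,W$ shifts the $I_8$ component by $4$, after replacing $Q$ by a torsion translate we may assume $j\in\{0,2\}$; the integrality of $P\cdot Q$ in the orthogonality relation
\[
0=\langle P,Q\rangle=2+(Q\cdot O)-(P\cdot Q)-\mathrm{contr}_\infty(P,Q)-\tfrac12\,|S\cap\{1,2,5,6\}|,
\]
with $\mathrm{contr}_\infty(P,Q)$ equal to $0$ if $j=0$ and to $3/2$ if $j=2$, rules out $j=2$ in combination with the minimal-$Q\cdot O$ case of the height equation. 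Hence $Q$ meets the identity component $\Theta_0$ of the $I_8$ fiber.

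With $j=0$ the height formula reduces to $N=4+2(Q\cdot O)-|S|/2$, and the admissible minimal values give $(Q\cdot O,|S|)=((N-1)/2,6)$ for $N$ odd and $((N-2)/2,4)$ for $N$ even. Substituting back into the orthogonality relation (now with $\mathrm{contr}_\infty(P,Q)=0$) yields $|S\cap\{1,2,5,6\}|=4$ in the odd case and $|S\cap\{1,2,5,6\}|=2$ in the even case. The sections $V,W$ distinguish the subsets $\{1,2,3,4\}$ and $\{5,6,7,8\}$, while $P$ picks out $\{1,2\}\cup\{5,6\}$, thereby partitioning the $I_2$ fibers into four natural pairs $\{1,2\}, \{3,4\}, \{5,6\}, \{7,8\}$. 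A short component-group calculation --- tracking the image of $Q$ under the map $\MW(Y_N)\to\bigoplus_v G_v$, which depends only on the class of $Q$ modulo the narrow Mordell--Weil group --- then forces $S$ to be a union of three such pairs for $N$ odd and to meet each of the four pairs in exactly one element for $N$ even. Renumbering within each pair produces the stated sets $S=\{1,2,5,6,7,8\}$ and $S=\{1,4,6,7\}$.

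The main obstacle will be this last component-group step: while the MWL-orthogonality with $P$ already determines the cardinality $|S|$ and the overlap $|S\cap\{1,2,5,6\}|$, certifying that $S$ respects the pair structure imposed by $V$ and $W$ requires careful bookkeeping of the image of $Q$ in $\bigoplus_v G_v$, together with the freedom to translate by the two-torsion group $\langle U,V\rangle$.
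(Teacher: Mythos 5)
Your numerical skeleton (height formula, parity of the $I_8$ component, torsion translates) is fine as far as it goes, but it does not determine the data the lemma actually asserts, and several of your intermediate claims do not follow. With $j=0$ the height formula $N=4+2(Q\cdot O)-|S|/2$ admits for odd $N$ both $(|S|,Q\cdot O)=(6,(N-1)/2)$ and $(2,(N-3)/2)$ (the latter is realised by the torsion translate $Q+U$), and for even $N$ it admits $|S|\in\{0,4,8\}$; nothing in your argument selects the stated value, and your appeal to ``minimal $Q\cdot O$'' would in fact pick $|S|=2$, not $6$. The orthogonality relation cannot ``yield'' $|S\cap\{1,2,5,6\}|$ either, because it contains the second unknown $P\cdot Q$, which is not given in advance (note $P\cdot Q=0$ is impossible here, so you are implicitly assuming the answer). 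Likewise $j=2$ is not excluded by integrality: $j=2$, $|S|=3$, $Q\cdot O=(N-1)/2$, $|S\cap\{1,2,5,6\}|$ odd passes every test you impose. Most importantly, the precise relation of $S$ to the sets cut out by $V$, $W$, $P$ --- e.g.\ distinguishing $\{1,2,5,6,7,8\}$ from $\{1,2,3,5,6,7\}$, which are not related by any renumbering preserving the patterns of \ref{ss: setup} --- is exactly the content of the lemma, and it is the step you defer to an unspecified ``component-group calculation.'' That step cannot be carried out from the Mordell--Weil data alone: the image of $Q$ in $\bigoplus_v G_v$ is constrained by how $\NS(Y_N)$ glues to $T(Y_N)=U(4)+\langle 4N\rangle$ inside the K3 lattice, not by heights and orthogonality.

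This gluing is precisely what the paper computes and what is missing from your proposal: $Y_N$ arises by the enhancement of \ref{constr} with $v_N\in U(2)+U(4)\simeq T(Y)$, so $\NS(Y_N)$ is an index-two overlattice of $\NS(Y)+\langle v_N\rangle$, and the extra generator is written explicitly as $(v_N+m_1+Nm_2)/2$ with the glue vectors $m_1=(1467)/2$, $m_2=(2458)/2$ of the $U(2)$-part of the discriminant group of $\NS(Y)$. Adding $O$, fiber components and multiples of $F$ to this class produces the section $Q$ of \eqref{eq:Q}, from which the fiber components met by $Q$ (and its height) are simply read off; in particular the specific sets $\{1,2,5,6,7,8\}$ and $\{1,4,6,7\}$ come from $m_1$, $m_2$ and the parity of $N$. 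Your approach could be repaired by incorporating this discriminant-form bookkeeping, but as written the key identification is assumed rather than proved, so the proposal has a genuine gap.
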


\begin{proof}
The surface $Y_{N}$ is obtained by specializing $Y$ as in \ref{construction: specialisation} 
by choosing the vector $v$ to be $v_N:=(1,-N,0,0)\in U(2)+ U(4)\simeq T(Y)$.
This is consistent with the  specialisation of $X$ to $X_N$.
Then  $\NS(Y_N)$ is an overlattice of index 2 of $\NS(Y)+\langle v\rangle$.
To find a generator of the full N\'eron-Severi lattice, we fix a basis within the discriminant group of $\NS(Y)$ that corresponds to the summand $U(2)$ of $T(Y)$ (as the orthogonal summand $U(4)$ is not affected by the specialisation).
In the present situation, this basis can be given as 
\[
m_1=\frac{(1467)}2, \;\;\; m_2=\frac{(2458)}2
\]
where $(1467)$ means the sum of the divisors $1$, $4$, $6$, $7$. 
Then one can easily check that $D=(v_N+m_1+Nm_2)/2$ is in $H^2(X,\Z)$.
This gives the missing generator of $\NS(Y_N)$.

It remains to find a section $Q$ corresponding to $D$.
For this we add fiber components and sections in such a way to $D$ that the resulting divisor $Q$ has self-intersection $Q^2=-2$ and meets every fiber in exactly one component:
\begin{eqnarray}
\label{eq:Q}
Q=
\begin{cases}
\frac{v_N-(125678)}2 + O + \frac{N+3}2 F & N \text{ odd},\\
\frac{v_N-(1467)}2 + O + \frac{N+2}2 F & N \text{ even}.
\end{cases}
\end{eqnarray}
In particular we read off the intersection behaviour claimed in the lemma,
and one verifies the given height.
\end{proof}

\subsection{Proof of Theorem \ref{thm:Br}, $N>1$}

It is immediate how to extend the action of the Enriques involution from $Y$ to $Y_N$:
On $\NS(Y)$ (and its image in $\NS(Y_N)$) it is known, and on $v_N$, $\tau$ acts as $-1$ since $v_N$ specialises from $T(Y)$ which sits in the anti-invariant part.
This determines the action of $\tau^*$ on $\NS(Y_N)$ completely.
For instance, consider the case where $N>1$ is odd
and $\tau$ composes $\imath$ with translation by $W$.
Then one derives
\begin{eqnarray}
\label{eq:tau-Q}
\tau^* Q = -Q -(12) + O + W + (N+1)F.
\end{eqnarray}
Independent of the parity and the two-torsion section involved in $\tau$,
we know that the section $[2Q]$ meets all fibers at their identity components.
In fact, from the description in \eqref{eq:Q} one derives
\[
[2Q] = v_N + O + 2NF.
\]
The orthogonal projection $\varphi$  with respect to the hyperbolic plane $U=\langle O,F\rangle$ gives exactly the divisor
\[
\varphi([2Q]) = [2Q] - O - 2NF = v_N. 
\]
That is, in $\MW(Y_N)$ the section $[2Q]$ corresponds exactly to $v_N$.
By definition, this divisor is orthogonal  to the whole image of $\NS(Y)$ in $\NS(Y_N)$
%Since 
%\[
%\NS(Y_N)^{\tau^*=1} = \text{im}(\NS(Y)^{\tau^*=1}),
%\]
and anti-invariant for $\tau^*$.
Using \eqref{eq:Y-anti} we thus find the following sublattice of the anti-invariant part of $\NS(Y_N)$:
\[
\NS(Y_N)^{\tau^*=-1} \hookleftarrow \text{im}(\NS(Y)^{\tau^*=-1}) + \langle\varphi([2Q])\rangle
\cong E_8(-2) + \langle -4N\rangle.
\]
The crucial question now is whether the inclusion above is actually an equality or whether we have a proper sublattice of index two.
The index cannot be bigger since we only have to solve whether the section $Q$ itself contributes to the anti-invariant part or only its multiple $[2Q]$.
The following lemma answers this question:

\begin{Lemma}
\label{lem:div}
\begin{enumerate}
\item If $N$ is even, then $\NS(Y_N)^{\tau^*=-1} \cong E_8(-2) + \langle -4N\rangle$.
\item
If $N>1$ is odd, then
$\NS(Y_N)^{\tau^*=-1}$ is an overlattice of 
$E_8(-2) + \langle -4N\rangle$ of index two.
It is generated by an anti-invariant divisor for $\tau^*$ of self-intersection $-N-3$ or $-N-5$.
\end{enumerate}
\end{Lemma}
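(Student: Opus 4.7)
The plan is to decide whether the inclusion $L := E_8(-2) + \langle -4N\rangle \hookrightarrow \NS(Y_N)^{\tau^*=-1}$ is an equality or has index exactly two. As explained immediately before the lemma, these are the only two possibilities, so the problem reduces to searching for a single anti-invariant integer divisor $D \in \NS(Y_N)$ with $2D \in L$ but $D \notin L$.

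First I would extend \eqref{eq:tau-Q} to the remaining cases: $N$ even, and the alternative Enriques involution built from translation by the two-torsion section $V$ in place of $W$. These computations follow from the formula \eqref{eq:Q} for $Q$, the action of the deck transformation $\imath$ on the $\Z$-basis of $\NS(Y)$ recalled in \ref{ss: setup} (trivial on sections and on the $I_8$ fiber, and $(12)(34)(56)(78)$ on the $I_2$ fibers), and the standard action of translation by a two-torsion section on fiber components and sections. With $\tau^*Q$ in hand in every case, I would form the anti-invariant rational candidate
\[
D \;:=\; \tfrac{1}{2}\bigl(Q - \tau^* Q\bigr) \;\in\; \NS(Y_N)\otimes\Q,
\]
which by construction satisfies $2D \in L$ modulo an invariant correction.

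Next I would test integrality of $D$, possibly after translating by an element of $L$. Expanding $Q - \tau^*Q$ in a $\Z$-basis of $\NS(Y_N)$ reveals three independent sources of half-integer coefficients: the half-class $v_N/2$ built into $Q$ via \eqref{eq:Q}, the coefficient $(N\pm 1)/2$ multiplying $F$ inside $\tau^*Q$, and the half-sums of paired $I_2$ fiber components produced by $\imath$. For $N$ odd, these dyadic contributions can be cancelled simultaneously by adding a suitable half-class supported on the $I_8$ fiber and on the four pairs of $I_2$ fibers exchanged by $\imath$; the resulting integer divisor $D$ extends $L$ with index two inside $\NS(Y_N)^{\tau^*=-1}$. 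For $N$ even, the parities no longer match and no such cancellation is possible, so $L = \NS(Y_N)^{\tau^*=-1}$.

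Finally, in the odd case the self-intersection $D^2$ follows from $4D^2 = (2D)^2 = e^2 - 4Nm^2$, where $2D = e + m\,v_N$ with $e \in E_8(-2)$ orthogonal to $v_N$; the values $-N-3$ and $-N-5$ in the statement correspond to the two inequivalent choices of correction available (arising from the two Enriques involutions built from $V$ and $W$), which yield classes $e \in E_8(-2)$ of squares $-12$ and $-20$ respectively, both of which exist since $E_8$ contains vectors of norms $6$ and $10$. The main obstacle is the coherent mod-$2$ bookkeeping: because the descriptions of $Q$, of $\tau^*Q$, and of the standard half-classes of $E_8(-2)$ all involve dyadic denominators, one must verify in a single $\Z$-basis of $\NS(Y_N)$ that their combination becomes integral precisely for $N$ odd; once this is checked, $D^2$ is a short intersection calculation using the data in \ref{ss: setup}.
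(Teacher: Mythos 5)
Your reduction is legitimate in outline: since the index is at most two and the possible extension class must involve $Q$, it suffices to decide whether some integral anti-invariant class of the form $\tfrac12(Q-\tau^*Q)+\tfrac12c$ exists, where $c$ ranges over $\NS(Y)^{\tau^*=-1}$ modulo $2\NS(Y)^{\tau^*=-1}$. Indeed the paper's explicit classes are exactly of this shape: from \eqref{eq:tau-Q} one checks that $D_1=Q+(1)-O-\tfrac{N+1}2F$ equals $\tfrac12(Q-\tau^*Q)+\tfrac12\bigl[((1)-(2))+(W-O)\bigr]$. But this already exposes your first concrete gap: the required correction is \emph{not} supported only ``on the $I_8$ fiber and on the four pairs of $I_2$ fibers exchanged by $\imath$'' --- it necessarily involves the sections ($W-O$, $V-U$, etc.), which also lie in $\NS(Y)^{\tau^*=-1}\cong E_8(-2)$ (cf.\ \eqref{eq:Y-anti}). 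Restricted to the support you describe, the cancellation you need for odd $N$ may simply not exist, and conversely your parity argument for even $N$ does not cover all corrections. This leads to the second, more serious gap: the even case is asserted (``the parities no longer match''), not proved. To conclude $\NS(Y_N)^{\tau^*=-1}=E_8(-2)+\langle-4N\rangle$ you must rule out \emph{every} correction class $c$ in the rank-$8$ lattice $\NS(Y)^{\tau^*=-1}$ modulo $2$, i.e.\ $2^8$ residue classes expressed in a $\Z$-basis of $\NS(Y_N)$ --- which is precisely why the paper resorts to an explicit (computer-assisted) verification for $N$ even. Without that exhaustive check your proposal does not prove part (1).

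Two smaller points. First, you never certify that your integral $D$ lies outside $E_8(-2)+\langle-4N\rangle$ in the odd case; this does follow (its $v_N$-component is $\tfrac12$, while every element of the sublattice has integral $v_N$-component), but it should be said --- the paper instead uses the cleaner criterion that all squares in $E_8(-2)+\langle-4N\rangle$ are $\equiv 0\bmod 4$, whereas one of $-N-3$, $-N-5$ is $\equiv 2\bmod 4$. Second, the two values $-N-3$ and $-N-5$ do not correspond to the two Enriques involutions built from $V$ and $W$: in the paper both divisors $D_1$ (square $-N-3$) and $D_2$ (square $-N-5$) are exhibited for the \emph{same} involution (the one using $W$), and both are needed because, depending on $N\bmod 4$, only one of the two squares is $\not\equiv 0\bmod 4$ and hence usable as a certificate. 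Your height bookkeeping $4D^2=e^2-4Nm^2$ with $e^2=-12,-20$ is numerically consistent with this, but the existence of vectors of norm $6$ and $10$ in $E_8$ is irrelevant --- the square of $D$ is forced by the construction and must be computed from the intersection data of \ref{ss: setup}, as the paper does.
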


\begin{proof}
To prove the first case, we used a computer program to express the above $\Q$-basis of $\NS(Y_N)^{\tau^*=-1}$
in terms of the specified $\Z$-basis of $\NS(Y_N)$. 
Then it is easily verified that the given lattice is not two-divisible.

To prove the second case, it suffices to exhibit an anti-invariant divisor $D\in\NS(Y_N)$ for $\tau^*$
for each given intersection number.
Since either $-N-3$ or $-N-5$ is not congruent to zero modulo $4$,
the respective divisor cannot be contained in $\text{im}(\NS(Y)^{\tau^*=-1}) + \langle\varphi([2Q])\rangle
\cong E_8(-2) + \langle -4N\rangle$.

Here we only give these divisors for the case where $\tau$ is $\imath$ composed with translation by $W$.
The Enriques involution involving $V$ can be dealt with similarly.
Consider the following divisor classes on $Y_N$:
\begin{eqnarray*}
D_1=
Q+(1)-O-\frac{N+1}2 F
 & \Rightarrow & D_1^2=-N-3, \tau^* D_1=-D_1.\\
D_2=Q-(4)-V-\frac{N-1}2 F
& \Rightarrow & D_2^2=-N-5, \tau^* D_2=-D_2.
\end{eqnarray*}
Let us check that these divisors are anti-invariant for $\tau^*$.
For instance
\[
\tau^*D_1 = \tau^* Q + (2) - W - \frac{N+1}2 F.
\]
By \eqref{eq:tau-Q}, one immediately finds $D_1+\tau^*D_1=0$.
A simple computation gives $D_1^2=-N-3$.
Similarly one finds $D_2^2=-N-5$ and
\[
D_2 + \tau^* D_2 = O + W - U - V - (1234) + 2F.
\]
One directly checks 
that the above divisor is perpendicular to the trivial lattice of $Y_N$ (fiber components and zero section).
Moreover $D_2 + \tau^* D_2$ induces the zero section in $\MW(Y_N)$.
Hence $D_2 + \tau^* D_2=0$ in $\NS(Y_N)$.
\end{proof}

Theorem \ref{thm:Br} follows from Lemma \ref{lem:div} as a direct application of Theorem \ref{thm:Beau}.
Recall that this implies Theorem \ref{thm2} for $N>1$.
\qed

\subsection{Proof of Theorem \ref{thm:Br}, $N=1$}
\label{ss:N=1}

By Lemma \ref{lem17}, $Y_1$ admits an elliptic fibration with singular fibers $I_8+I_4+6I_2$ 
and $\MW(Y_1)=\left(\Z/2\Z\right)^2\times \Z$. 
The surface $Y_1$ is obtained as a specialisation of $Y$ as in \ref{constr} by choosing the vector $v_1\colon=(1,-1,0,0)\in U(2)+ U(4)\simeq T(Y)$ and hence the transcendental lattice of $Y_1$ is $\langle 4\rangle + U(4)$. 
Geometrically this specialisation consists of merging two fibers of type $I_2$ 
of the given elliptic fibration on $Y$, in order to obtain a fiber of type $I_4$ on $Y_1$. 
As usual its components are numbered $C_0,\hdots, C_3$.
We recall that $Y$ admits a $2\colon1$ map to a rational elliptic surface. 
Since this base change extends naturally to $Y_1$,  
the specialisation is ramified at an $I_2$ fiber of the rational elliptic surface,
and we are merging two fibers on $Y$ which are exchanged by the deck transformation $\imath$.
We shall assume that these two fibers are the $7$-th and the $8$-th fiber
(with the same notation as \ref{ss: setup}). 
The sections of $Y$ specialise to sections of $Y_1$,
so we obtain the following intersection pattern:
$$
\begin{array}{cccccccccccc}
& I_8 &I_4& I_2\text{'s:} & 1 & 2 & 3 & 4 & 5 & 6 \\
\hline
U & \Theta_0 &C_2&& 1 & 2 & 3 & 4 & 5 & 6 \\ % b
V & \Theta_4 &C_0&&1 & 2 & 3 & 4 & &\\ % a
W & \Theta_4&C_2&&&&&&5 & 6 \\ % a+b
P & \Theta_2&C_0&& 1 & 2&&& 5 & 6  % z
\end{array}
$$
In this set-up,
the composition of the deck transformation $\imath$ with the translation by the 2-torsion section $W$ 
is an Enriques involution (see Lemma \ref{lem2}). 
Consider the divisor 
\[
D:=\Theta_4+\Theta_5+\Theta_6+\Theta_7+C_2+C_3-(24)-V+W
\]
One easily checks that $D$ is $\tau^*$-anti-invariant and $D^2=-6$. 
By Theorem \ref{thm:Beau}, this concludes the proof of Theorem \ref{thm:Br} in case $N=1$.
This completes the proof of Theorem \ref{thm2}. \qed

\subsection{A rigid example} 
\label{ss:GvG}

We conclude this paper with a 
 brief description of the unpublished example from \cite{GvG} mentioned in the introduction. 
This example comes up naturally here
as it appears as a specialisation of the Kummer surface $Y_1$ described in the previous paragraph. 
Let us specialise (as in \ref{constr}) the surface $Y_1$ choosing the vector $v$ to be $v:=(1,1,-1)\in \langle 4\rangle+ U(4)\simeq T(Y_1)$. 
The surface obtained has transcendental lattice isomorphic to $\left[\begin{array}{rr}4&0\\0&4\end{array}\right]$. 
Hence it is the Kummer surface, $\Km(E_i\times E_i)$, of the product of $E_i$ (the elliptic curve with an automorphism of order 4) with itself. 
Geometrically this specialisation consists of merging two further fibers of type $I_2$ with the fiber of type $I_4$. 
After this specialisation,
the given elliptic fibration of $Y_1$ attains  singular fibers $2I_8+4I_2$. 
As before we have to choose the fibers of type $I_2$ that we merge with the fiber of type $I_4$ 
in such a way that the $2\colon1$ map from $Y_1$ to a rational elliptic surface is preserved. 
This only allows the $I_2$ fibers $5, 6$.
The rational elliptic surface is forced to degenerate as well, attaining a second fiber of type $I_4$.
Above this fiber, the K3 surface has the degenerate fiber of type $I_8$.
Numbering its fiber components cyclically $D_0,\hdots,D_8$, 
we obtain the following intersections:
$$
\begin{array}{cccccccccccc}
& I_8 &I_8& I_2\text{'s:} & 1 & 2 & 3 & 4  \\
\hline
U & \Theta_0 &D_4&& 1 & 2 & 3 & 4  \\ % b
V & \Theta_4 &D_0&&1 & 2 & 3 & 4 \\ % a
W & \Theta_4&D_4&&&&& \\ % a+b
P & \Theta_2&D_2&& 1 & 2&&  % z
\end{array}
$$
The infinite section $P$ on $Y_1$ becomes a 4-torsion section on $\Km(E_i\times E_i)$ 
(induced from the rational elliptic surface underneath),
 and $W$ is exactly twice $P$. 
 %The section $V$ stays a 2-torsion section of the fibration on $Km(E_i\times E_i)$. 
 Hence the Mordell--Weil group of the elliptic fibration on $\Km(E_i\times E_i)$ is $\Z/4\Z\times \Z/2\Z$. 
 
The Enriques involution $\tau$ on $Y_1$ 
(composition of the deck transformation with the translation by  $W$)
specialises without fixed points to $\Km(E_i\times E_i)$. 
The following divisor is anti-invariant for $\tau^*$:
\[
D:=O-U+\Theta_4+\Theta_5+\Theta_6+\Theta_7+D_4+D_5+D_6+D_7-(13).
\] 
Since $D^2=-10$, the Brauer group of $\Km(E_i\times E_i)/\tau$ pulls back to zero by Theorem \ref{thm:Beau}. 

We note that this elliptic fibration on $\Km(E_i\times E_i)$ as well as the Enriques involution $\tau$  can be defined over $\Q$.
As a specialisation of \eqref{eq:WF-Y},
it admits the Weierstrass form
\begin{eqnarray}
\label{eq:ii}
\Km(E_i\times E_i):\;\;\; y^2 = x(x-t^4)(x-t^4+4).
\end{eqnarray}
Here $\imath(t)=-t$ and $W=(t^4,0)$.

\subsection{}
Naturally the surface $\Km(E_i\times E_i)$ is the quotient by a Morrison--Nikulin involution of a surface
$S$ specialising from $\mX_1$.
Here $S$ is obtained from \eqref{eq:ii} by the $2$-isogeny induced by the two-torsion section $(0,0)$.
In accordance with Sections \ref{s:lat} and \ref{s:geom}, 
the K3 surface $S$ admits an elliptic fibration with singular fibers $I_{16}+I_4+4I_1$ and $\MW=\Z/4\Z$.
The Morrison--Nikulin involution is the translation by the 2-torsion section.
Abstractly $S$ is given as desingularisation of
 $(E_i\times E_i)/\langle\alpha\times \alpha^3\rangle$ 
 where $\alpha$ is an order four automorphism of $E_i$.

\subsection{}
For each $N$, we constructed two different 1-dimensional families of K3 surfaces with an Enriques involutions: the families $\mX_N$ and the families $\mY_N$ related by a $2$-isogeny for fixed $N$. 
The families show a nice interplay at specialisations with $\rho=20$.
For instance the surface $\Km(E_i\times E_i)$ is both a specialisation of $\mY_1$ and of $\mX_2$.
Note, however, that the induced Enriques involutions may differ (or degenerate) as we show below.

Above we have constructed $\Km(E_i\times E_i)$ as a specialisation of $\mY_1$.
The induced Enriques involution $\tau$ had the anti-invariant divisor $D$ with $D^2=-10$.
On the other hand $\Km(E_i\times E_i)$ arises as a specialisation of $\mX_2$ as in \ref{constr} 
by choosing the vector $v$ to be $v:=(0,1,-1)\in \langle 4\rangle + U(2)$. 
The Enriques involution on $\mX_2$ induces an Enriques involution $\tau_2$ on $\Km(E_i\times E_i)$.
In this case, one finds that there is no anti-invariant divisor $D$ on $\Km(E_i\times E_i)$ 
such that $D^2\not\equiv 0\mod 4$. 
In consequence the Enriques involutions  on $\Km(E_i\times E_i)$ 
induced from $\mY_1$ and from $\mX_2$ are not conjugate in the automorphism group of $\Km(E_i\times E_i)$. 

Alternatively one can argue with the automorphism groups of the Enriques quotients.
It follows immediately from the construction 
that the Enriques surface $\Km(E_i\times E_i)/\tau$ admits 
an elliptic fibration with two double fibers of type $I_4$ 
while $\Km(E_i\times E_i)/\tau_2$ admits an elliptic fibration with one double fiber of type $I_8$. 
By Kondo's classification (see especially \cite[Table 2]{Kondo}) 
these Enriques surfaces have different (finite) automorphism groups.

\subsection*{Acknowledgements}

Most of  this work
was carried out during the authors' visits to each other's home institution
whom we thank for the great support and warm hospitality.
Particular thanks go to Bert van Geemen and Klaus Hulek.
We are grateful to the referee for many helpful comments.

\end{document}